\newcommand{\setbuilder}[2]{\left\{#1\ \colon #2\right\}}
\newcommand{\R}{{\mathbb R}}
\newtheorem*{thm*}{Theorem}
\newtheorem*{cla*}{Claim}
\newtheorem{thm}{Theorem}
\newtheorem{gypo}{Conjecture}
\newtheorem{lem}[thm]{Lemma}
\newtheorem{cla}[thm]{Claim}
\newtheorem{ques}{Question}
\date{}
\newtheorem{prop}[thm]{Proposition}
\newtheorem{prb}{Problem}
\newcommand{\ep}{\varepsilon}
\DeclareMathOperator{\eps}{\varepsilon}
\date{}
\title{Nearly $k$-distance sets}
\author{N\'ora Frankl\thanks{Alfréd Rényi Institute of Mathematics. Email: {\tt nfrankl@renyi.hu } Research was partially supported by the National Research, Development,
and Innovation Office, NKFIH Grant K119670 and ERC Advanced Grant "GeoScape."} \and Andrey Kupavskii\thanks{Moscow Institute of Physics and Technology, G-SCOP, Universite Grenoble-Alpes, CNRS, France. Email: {\tt kupavskii@yandex.ru} \ \ Research supported  by the Russian Foundation for Basic Research (grant
no.  18-01-00355) and the Council for the Support of Leading Scientific Schools of the President of the
Russian Federation (grant no.
НШ-2540.2020.1).}}
\begin{document}
\maketitle
\begin{abstract}
     We say that a set of points $S\subset \R^d$ is an $\ep$-nearly $k$-distance set if there exist $1\le t_1\le \ldots\le t_k,$ such that the distance between any two distinct points in $S$ falls into $[t_1,t_1+\ep]\cup\ldots\cup[t_k,t_k+\ep]$. In this paper, we study the quantity 
    \vspace{-0.2cm}
    \[M_k(d) = \lim_{\eps\to 0}\max\{|S|\ :\ S\text{ is an }\ep\text{-nearly $k$-distance set in }\R^d\}
    \vspace{-0.2cm}
    \] 
    and its relation to the classical quantity $m_k(d)$: the size of the largest $k$-distance set in $\R^d$.
    We obtain that $M_k(d) = m_k(d)$ for $k=2,3$, as well as for any fixed $k$, provided that $d$ is sufficiently large. The last result answers a question, proposed by Erd\H{o}s, Makai and Pach.

    We also address a closely related Tur\'an-type problem, studied by Erd\H os, Makai, Pach, and Spencer in the 90's: given $n$ points in $\R^d$, how many pairs of them form a distance that belongs to $[t_1,t_1+1]\cup\ldots\cup[t_k,t_k+1],$ where $t_1,\ldots, t_k$ are fixed and any two points in the set are at distance at least $1$ apart?
    We establish the connection between this quantity and a quantity closely related to $M_k(d-1)$, as well as obtain an exact answer for the same ranges $k,d$ as above.
\end{abstract}

\section{Introduction}
Let us start with some definitions. We call any point set that determines at most $k$ distances a {\it $k$-distance set}. We denote by \emph{$m_k(d)$ the cardinality of the largest $k$-distance set in $\mathbb R^d$}.
A {\it balanced complete $s$-partite graph} on $n$ vertices is a graph whose vertices are partitioned into $k$ groups of size $\lfloor n/s\rfloor$ or $\lceil n/s\rceil$, and in which two vertices are connected by an edge if and only if they belong to different groups.
We denote by $T(n,s)$ the number of edges in a balanced complete $s$-partite graph on $n$ vertices. A set $P\subseteq \mathbb{R}^d$ is {\it separated} if  $\|p_1-p_2\|\geq 1$ for any $p_1, p_2\in P$ with $p_1\neq p_2$. Let us formulate the main result of this paper.

\begin{thm}\label{thm000}
The following holds for $k\le 3$ and any $d$, and for any fixed $k$ and $d\ge d_0(k)$.
\begin{itemize}
    \item[(i)] There exists $\ep=\ep(k,d)>0$, such that for any sequences $t_1,\ldots,t_k\ge 1$ of distances the following is true. If $P\subseteq \mathbb{R}^d$ is a set such that for any $p_i,p_j\in P$ with $p_i\neq p_j$  we have $\|p_i-p_j\|\in [t_1,t_1+\ep]\cup\ldots\cup[t_k,t_k+\ep]$, then $P$ has size at most $m_k(d)$. (Stated differently, $M_k(d) = m_k(d)$.)
    \item[(ii)] There exists $n_0(k,d)>0$, such that for any sequence $t_1,\ldots,t_k>0$ of distances the following is true. If $P\subset \R^d$ is a separated set of size $n\geq n_0$, then the number of pairs of points in $P$ whose distance lies in $[t_1,t_1+1]\cup\ldots\cup[t_k,t_k+1]$ is at most $T(n,m_k(d-1))$. This bound is sharp. Moreover, the same holds with intervals of the form $[t_i,t_i+cn^{1/d}]$ for some $c=c_0(k,d)>0$. 
\end{itemize}
\end{thm}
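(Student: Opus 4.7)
The approach combines a perturbed version of the Bannai--Bannai--Stanton (BBS) polynomial method with a compactness/limiting argument. For part (i), associate to each point $p_i$ in an $\eps$-nearly $k$-distance set the polynomial $F_i(x) = \prod_{\ell=1}^k(\|x-p_i\|^2 - t_\ell^2)$. The $F_i$ lie in the space of polynomials of degree $\le k$ in $\|x\|^2, x_1, \ldots, x_d$, a space of dimension at most $\binom{d+k}{k}$. For a true $k$-distance set, $F_i(p_j) = 0$ for $i \ne j$. In the nearly setting, $F_i(p_j) = O(\eps \cdot t_{\ell(i,j)} \cdot \prod_{m\ne \ell(i,j)} t_m^2)$, while $|F_i(p_i)| = \prod_\ell t_\ell^2$, so when the $t_\ell$ are of comparable magnitude, the matrix $(F_i(p_j))_{i,j}$ is diagonally dominant for small $\eps$, the $F_i$ are linearly independent, and $|P| \le \binom{d+k}{k}$. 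For fixed $k$ and $d$ large this matches $m_k(d)$ (BBS is tight); for $k\le 3$ the classical characterizations of extremal $k$-distance sets (Larman--Rogers--Seidel, Blokhuis) provide the necessary sharpening.

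When the $t_\ell$ vary widely in scale the polynomial bound degrades and a cluster decomposition takes over. Suppose for contradiction a sequence of $\eps_n$-nearly $k$-distance sets $P_n$ of size $N = m_k(d)+1$ has $\eps_n \to 0$; rescale so $t_1^{(n)} = 1$ and pigeonhole on the interval-label of each pair so that the labeling is constant along the subsequence. If $t_k^{(n)}$ stays bounded, $P_n$ sits in a fixed ball, and a convergent subsequence produces a true $k'$-distance set of size $N>m_{k'}(d)$, contradiction. If $t_k^{(n)}\to\infty$, locate a maximal scale gap $t_{j+1}^{(n)}/t_j^{(n)}\to\infty$, decompose $P_n$ by the ``short-distance'' graph, and induct on $k$: each cluster is a nearly $j$-distance set of bounded diameter, while the cluster-centers form a nearly $(k-j)$-distance set, and the product bound matches $m_k(d)$ in the stated ranges.

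For part (ii), the matching construction places $s = m_k(d-1)$ small well-separated clusters of $\lfloor n/s\rfloor$ separated points each at the vertices of a maximum $k$-distance set in $\R^{d-1}\subset\R^d$, rescaled so the inter-center distances equal $t_1,\ldots,t_k$; intra-cluster distances stay below $t_1$ while inter-cluster distances concentrate in the $k$ intervals, producing exactly $T(n,s)$ relevant pairs. For the upper bound, by Tur\'an's theorem it suffices to show the distance graph is $K_{s+1}$-free: given any $s+1$ points pairwise in the distance intervals, project along the direction of a longest pairwise edge onto $\R^{d-1}$. A computation using the projection formula and the law of cosines shows the projected distances again fall into $k$ sufficiently narrow intervals (relative to the new distance scale), yielding a nearly $k$-distance set of size $s+1>m_k(d-1)$ in $\R^{d-1}$, contradicting part (i) in dimension $d-1$.

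\textbf{Main obstacle.} The wide-aspect-ratio regime in part (i) is the crux: the iterative cluster decomposition must be calibrated so that the effective $\eps$ remains small at each recursion level (forcing the scale gap to exceed $\eps^{-\Omega(1)}$), and the product of bounds over recursion levels must be controlled against $m_k(d)$ using the extremal theory. The same quantitative balance dictates the interval width $c n^{1/d}$ in part (ii): after projection onto $\R^{d-1}$, the new $\eps$ must remain small relative to the new distance scale for part (i) in one lower dimension to apply, which is the bottleneck of the proof.
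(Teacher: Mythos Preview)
Your proposal has the right skeleton for part (i) --- compactness for bounded ratios, cluster decomposition for large gaps --- but the cluster step as stated does not close. When $t_{j+1}/t_j$ is large you bound $|P|$ by (number of clusters)$\times$(max cluster size), i.e.\ by a product of the form $m_{k-j}(d)\cdot m_j(d)$, and assert this ``matches $m_k(d)$ in the stated ranges''. It does not: already for $k=2$ with a single large gap the product is $m_1(d)^2=(d+1)^2$, while $m_2(d)\le\binom{d+2}{2}$, so the product overshoots for every $d$. The paper's essential extra idea is a \emph{dimension split}: if the cluster-centers $B$ span (nearly) a $j$-plane, then each small cluster $R$ is nearly orthogonal to every direction in $B$ (since the distances from $R$ to any fixed center of $B$ lie in a single short interval), hence $R$ is $(d-j,\alpha)$-flat and $|R|\le m_{k'}(d-j)$ rather than $m_{k'}(d)$. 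The resulting products $m_{k-i+1}(j)\cdot m_{i-1}(d-j)$ can, via \eqref{eqbbs} and a binomial inequality, be bounded by $\binom{d+1}{k}\le m_k(d)$ for large $d$; for $k\le 3$ the same scheme is checked against the known small values of $m_2,m_3$. This flatness machinery (Lemmas~\ref{flatslemma} and~\ref{smallcliques}) is the heart of the argument and is absent from your outline. (Your side claim that ``BBS is tight for large $d$'' is also incorrect --- $\binom{d+k}{k}$ strictly exceeds $\binom{d+1}{k}$ --- but this is moot, since your own compactness argument already gives the sharp bound $m_k(d)$ in the bounded-ratio case and makes the polynomial method redundant there.)

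For part (ii) the projection argument is a genuine gap. Projecting $s+1$ points onto the hyperplane orthogonal to a longest edge does not send the pairwise distances into $k$ short intervals: the projected distance $\|\pi(p_i)-\pi(p_j)\|$ depends on $\langle p_i-p_j,e\rangle$, which is uncontrolled, and the image need not even be separated, so part (i) in $\R^{d-1}$ cannot be invoked. The paper proceeds entirely differently. Assuming more than $T(n,A_k(d))$ edges, Erd\H os's theorem yields an edge lying in $\delta n^{\ell-2}$ copies of $K_\ell$ (with $\ell=A_k(d)+1$); a random-sampling step together with the hypergraph K\H ov\'ari--S\'os--Tur\'an theorem then extracts a $K_{1,1,m,\dots,m}$ whose vertices are pairwise at distance at least $c_1n^{1/d}$, and a further refinement forces a single distance-interval between each pair of parts and small angles $\angle p_jp_iq_j$. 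Picking one representative per part produces an \emph{almost} $(d-1,\alpha)$-flat $\varepsilon$-nearly $k$-distance set of size $\ell$, contradicting the strengthened form of part (i). The passage from $d$ to $d-1$ is thus achieved through flatness of this extracted configuration, not by any projection.
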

Theorem~\ref{thm000} (ii) for $k=1$ was proved by Erd\H os, Makai, Pach, and Spencer \cite{EMPS} (see Theorem~\ref{emps2} below). For $k=2$,  it was shown by Erd\H os, Makai, and Pach \cite{EMP2} in a slightly weaker form (see Theorem~\ref{emp3} below). Although (i) and (ii) are strongly related, the ``max clique'' problem in (i) has not been addressed before.
Theorem~\ref{thm000} is a combination of Theorems~\ref{nearly k} and~\ref{thmsmallkexact} below.

In the long introductory section that follows, we tried to address several points:

First, 
we relate the study of ``nearly-equal distances'' to that of ``equal distances''. The history of the latter is summarised in the next subsection, and the relation between the two notions is developed in Section~\ref{sec14}, in which we give constructions of nearly $k$-distance sets and compare them with the known constructions of $k$-distance sets.  

Second, 
we relate the older Tur\'an-type problem on the number of ``nearly-equal distances'' to the proposed problem of determining the largest nearly $k$-distance set. We give the history of this Tur\'an-type problem in Section~\ref{sec12}, introduce the study of nearly $k$-distance sets in Section~\ref{sec13}, and establish the first result that relates the  two questions  in Section~\ref{sec14}. 

Third,
we introduce some of the more technical, but important, notions used in the proofs in Section~\ref{proofs}, and in particular $\alpha$-flat nearly $k$-distance sets, defined in Section~\ref{sec131}. It is via the notion of $\alpha$-flat nearly $k$-distance sets that we actually establish the link between the ``nearly $k$-distance set'' problem and the Tur\'an-type problem.

Our main results are presented in Section~\ref{sec15}, with their proofs in Section~\ref{proofs}.

\subsection{Equal distances}

In 1946, Erd\H os \cite{Erd46} asked the following two questions, which greatly influenced the course of discrete geometry. Take a set $X$ of $n$ points on the plane. \begin{ques}\label{qu1} What is the smallest number of distinct distances that $X$ can determine?\end{ques} \begin{ques}\label{qu2} What is the maximum number of equal distances that $X$ can determine?\end{ques} These questions have a rich history, and we refer the reader to the book of Brass, Moser and Pach \cite{BMP} and the references therein. In the recent years, the development of algebraic methods in discrete geometry lead to a breakthrough of Guth and Katz \cite{GK}, who showed that the quantity in the first question is $\Omega(n/\log n)$, which almost matches Erd\H os' upper bound $O(n/\log^{1/2} n)$. For $\R^d$ with $d\ge 3$ Solymosi and Vu \cite{SV} proved that the quantity in the first question is
$\Omega\big(n^{(2/d)(d+1)/(d+2)}\big)$, while Erd\H os proved the close and conjecturably sharp upper bound $O(n^{2/d})$.

Even though Questions~1 and~2 seem to have exactly the same flavour, (which is backed by, e.g., the fact that upper bounds in Question~2 imply lower bounds in Question~1), much less is known about Question~2. The best upper bound $O(n^{4/3})$ is due to Spencer, Szemer\'edi, and Trotter \cite{SST}, and the best lower bound is due to Erd\H os is $\Omega(n^{1+c/\log\log n})$ for some $c>0$. Interestingly, this problem becomes much simpler in dimensions $d\ge 4$: there are point sets that determine quadratically many unit distances. (Brass \cite{Brass} and Van Wamelen \cite{VW} determined the maximum number of unit distances exactly for $d=4$ and Swanepoel  \cite{Swan}  for even $d\ge 6$ and large $n$, respectively.)
Stated slightly differently, Question~1 
asks to determine $m_k(2)$. The bounds mentioned above give $\Omega(k\log^{1/2} k)\le m_k(2)\le O(k\log k)$. In 1947, Kelly \cite{Kel} showed that $m_2(2) = 5$. 
Larman, Rogers, and Seidel \cite{LRS} showed that $m_2(d)\le \frac 12(d+1)(d+4)$. Several years later, Bannai, Bannai, and Stanton \cite{BBS} and independently Blokhuis \cite{Blok} found the following better bound, which additionally works for any $k$:
\begin{equation}\label{eqbbs}
  m_k(d)\le \binom{d+k}{k}.
\end{equation}

There is the following very natural construction of a $k$-distance set in $\mathbb R^d$ if $k\leq d+1$: in $\mathbb R^{d+1}$, take all vectors in $\{0,1\}^{d+1}$ with exactly $k$ many $1$'s. Then they lie on a sphere in the hyperplane $\sum x_i=k$ and determine only $k+1$ distinct scalar products (and thus only $k$ distinct distances). This gives the lower bound
\begin{equation}\label{eqlb}
  m_k(d)\ge \binom{d+1}{k}.
\end{equation}
We will refer to \eqref{eqbbs} and \eqref{eqlb} many times in the proofs. Table \ref{small}, taken from a paper of Sz{\"o}ll{\H{o}}si and {\"O}sterg{\aa}rd \cite{szol}, summarises the best known lower bounds on (and in some cases exact values of) $m_k(d)$ for small values of $k$ and $d$. Note also that it is easy to see that for any $d$ we have $m_1(d)=d+1$ and for any $k$ we have $m_k(1) = k+1$.

\begin{table}
\begin{center}
  \begin{tabular}{ | c | r | r | r | r | r | r | r | r | }
    \hline
    $_k\diagdown^d$ & 2 & 3 & 4 & 5 & 6 & 7 & 8 \\ \hline
    2 & 5 & 6 & 10 & 16 & 27 & 29 & 45 \\ \hline
    3 & 7 & 12 & 16 & $\geq 24$ & $\geq 40$ & $\geq 65$ & $\geq 121$\\ \hline
    4 & 9 & 13 & $\geq 25$ & $\geq 41$ & $\geq 73$ & $\geq 127$ & $\geq 241$\\ \hline
    5 & 12 & $\geq 20$ & $\geq 35$ & $\geq 66$ & $\geq 112$ & $\geq 168$ & $\geq 252$\\ \hline
    6 & 13 & $\geq 21$ & $\geq 40$ & $\geq 96$ & $\geq 141$ & $\geq 281$ & $\geq 505$\\ \hline
  \end{tabular}
  \caption{Lower bounds on $m_k(d)$.}\label{small}
\end{center}
\end{table}

\subsection{Nearly equal distances} \label{sec12}
Most of the results on $k$-distance sets use proofs with an algebraic flavour and often use results on incidences of points and surfaces. In the 90s, Erd\H os, Makai, Pach, and Spencer \cite{EMPS} proposed the following variant of the problem, which is \emph{not} of this type: given a set of $n$ points on the plane, how many of the distances between the points could be {\it nearly equal}, that is, that fall into the interval $[t,t+1]$ for some $t$? To avoid trivialities, we only consider separated sets.

It turns out that the answer to this question is very different from the answer to Question~2: we can have as many as $n^2 $ nearly equal distances in a separated set of size $2n$. To see this, take $2n$ points of the form $(x_1,x_2)$, where $x_1\in \{0,n^2\}$ and $x_2\in \{1,\ldots,n\}$. Then any distance between points with different $x_1$-coordinates is between $n^2$ and $\sqrt{n^4+n^2}<n^2+1$.

This example turns out to be optimal. The following theorem was proved by Erd\H os, Makai, Pach, and Spencer \cite{EMPS}.
\begin{thm}[\cite{EMPS}, Theorem~3]\label{emps1}
Let $P$ be a separated set of $n$ points in the plane. If $n$ is sufficiently large, then, for every $t>0$, the number of pairs of points in $P$ whose distance lies in the interval $[t,t+1]$ is at most $\lfloor n^2/4\rfloor$. This bound can be attained for every $t\ge t(n)$, where $t(n)$ is a suitable function of $n$.
\end{thm}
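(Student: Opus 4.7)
Write $G = G(P,t)$ for the graph on $P$ whose edges are the pairs at distance in $[t,t+1]$; the goal is $|E(G)| \le \lfloor n^2/4\rfloor$. For each point $p\in P$, its neighbors in $G$ lie in the annulus $\{x : t \le \|x-p\| \le t+1\}$, which has area $\pi(2t+1)$; since $P$ is separated, such an annulus contains at most $O(t)$ points, so the maximum degree of $G$ is $O(t)$ and $|E(G)| = O(nt)$. In particular, if $t \le cn$ for a sufficiently small constant $c$, the bound is immediate. From now on I assume $t \ge cn$, and in particular that $t$ is at least a large constant.

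The key geometric lemma is that each edge $\{a,b\}$ of $G$ lies in at most a constant $C_1$ triangles of $G$. The locus of $c$ with $\|a-c\|,\|b-c\|\in [t,t+1]$ is the intersection of two annuli of thickness $1$, concentrated near the two points where the circles of radius $t$ about $a$ and $b$ meet. Parametrising by $(u,v)=(\|a-c\|,\|b-c\|)$ gives Jacobian $uv/(|y|\cdot\|a-b\|)$, where $y$ is the signed height of $c$ above the line $ab$; since all three sides lie in $[t,t+1]$ the triangle $abc$ is close to equilateral, $|y|=\Theta(t)$, and each of the two lens-shaped regions has area $\Theta(1)$. Separation of $P$ then caps the number of admissible $c$ by a constant.

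Combining the bound $T(G)\le C_1|E(G)|$ with the supersaturation form of Mantel's theorem — if $|E(G)|=\lfloor n^2/4\rfloor + s$ with $s\ge 0$, then $G$ has at least $s\lfloor n/2\rfloor$ triangles — we get $s\lfloor n/2\rfloor \le C_1|E(G)|$, so $s=O(n)$ and $|E(G)|\le n^2/4+O(n)$. To sharpen this to the exact bound I would run an induction on $n$: if $G$ has a vertex $v$ with $\deg(v)\le n/2$, deleting $v$ and using the inductive hypothesis on the remaining $n-1$ points yields $|E(G)| \le n/2 + \lfloor (n-1)^2/4\rfloor = \lfloor n^2/4\rfloor$. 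The annulus packing bound $\Delta(G)=O(t)$ guarantees such a low-degree vertex whenever $t = O(n)$.

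The main obstacle is the regime $t\gg n$ (and especially $t=\Omega(n^2)$, where the two-parallel-lines construction actually saturates the bound), since there every vertex can have degree exceeding $n/2$ and the min-degree induction may fail outright. Here one must invoke planar rigidity: a vertex of degree $\Omega(n)$ forces its neighbors into a thin angular arc of the annulus — essentially a short stretch of line at distance $\sim t$ — and iterating this observation at several high-degree vertices should pin $P$ down to a small neighborhood of two nearly-parallel lines at distance $\sim t$, matching the extremal configuration. Quantifying this stability, and checking that any deviation from the two-line geometry can only lose edges, should close the gap and deliver the sharp bound $\lfloor n^2/4\rfloor$.
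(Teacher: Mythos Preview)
Your geometric lemma --- that each edge of $G$ lies in at most a bounded number $C_1$ of triangles --- is precisely the right ingredient, and the Jacobian computation is correct (in fact it yields a uniform $C_1$ for all $t$ above an absolute constant, not merely for $t\ge cn$). The gap is entirely in the combinatorial finish.

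Supersaturation gives only $|E(G)|\le n^2/4+O(n)$, and your induction step needs a vertex of degree at most $n/2$, which the annulus bound $\Delta(G)=O(t)$ cannot supply once $t\gg n$. You acknowledge this and sketch a stability argument for that regime; this would be delicate to carry out and is, more to the point, unnecessary.

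The paper does not reprove this particular statement (it is quoted from \cite{EMPS}), but its argument for the exact bound in the general case (Theorem~\ref{thmak}, via Lemma~\ref{completemulti}) specialises here to a one-line finish. Instead of global supersaturation, apply Erd\H{o}s's Theorem~\ref{erd69}: any $n$-vertex graph with more than $T(n,2)=\lfloor n^2/4\rfloor$ edges contains an \emph{edge} lying in at least $\delta n$ triangles, for an absolute constant $\delta>0$. Combined with your per-edge bound this forces $\delta n\le C_1$, a contradiction for large $n$. No induction, no comparison of $t$ against $n$, and no stability analysis are needed.

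In short, you proved the strong per-edge statement but then only exploited its weak global consequence $T(G)\le C_1|E(G)|$, pairing it with Lov\'asz--Simonovits-type supersaturation; that is exactly what the paper does for the \emph{approximate} bound in Theorem~\ref{manynearly}, and it naturally leaves an $O(n)$ error. Pairing the per-edge bound with Theorem~\ref{erd69} instead gives the exact result immediately.
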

They have studied two natural types of generalisations: one deals with higher  dimensions and the other with more intervals in which the distances may fall. 
\begin{thm}[\cite{EMPS}, Theorem~5]\label{emps2}
Let $P$ be a separated set of $n$ points in $\mathbb R^d$. If $n$ is sufficiently large, then for every $t>0$, the number of pairs of points in $P$ whose distance lies in the interval $[t,t+1]$ is at most $T(n,d)$. This bound can be attained for every $t\ge t_0(d,n)$, where $t_0(d,n)$ is a suitable function of $d$ and $n$.
\end{thm}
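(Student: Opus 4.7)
My plan is to reduce the theorem to a Tur\'an-type bound on a suitably defined geometric graph and then exploit the rigidity of near-regular simplices. Let $G$ be the graph on vertex set $P$ whose edges are the pairs at distance in $[t,t+1]$; the task is to show $e(G)\le T(n,d)$. I would proceed in three steps: first rule out a large $(d{+}1)$-partite blow-up in $G$ using geometric rigidity, then apply Erd\H os--Stone--Simonovits to get an asymptotic bound, and finally tighten this to the sharp bound via a stability argument combined with induction on $d$, taking $d=2$ (Theorem~\ref{emps1}) as the base case.

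The central geometric lemma is that for each $d$ there exists a constant $s=s(d)$ such that $G$ contains no complete $(d{+}1)$-partite subgraph $K_{s,s,\ldots,s}$. Suppose to the contrary that such a subgraph existed with parts $C_0,\ldots,C_d$. Choose representatives $r_i\in C_i$; after rescaling by $1/t$, the $r_i$ lie within $O(1/t)$ of the vertices of a regular $d$-simplex and are therefore affinely independent. Any further point $p\in C_0$ satisfies the $d$ constraints $\|p-r_i\|\in[t,t+1]$ for $i=1,\ldots,d$, and a direct computation shows that the intersection of $d$ spherical shells of width $1$ around affinely independent centers at pairwise distance $\approx t$ has diameter bounded by some $D=D(d)$. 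The $1$-separation of $C_0$ then yields $|C_0|\le s(d)$ by a standard packing bound, contradicting $|C_0|=s$ for $s$ sufficiently large. The regime where $t$ is bounded is handled separately: any edge of $G$ has length at most $t+1$, so each connected component sits in a ball of bounded radius and contains only $O_d(1)$ vertices, which already satisfies the bound.

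With the blow-up forbidden, Erd\H os--Stone--Simonovits yields $e(G)\le T(n,d)+o(n^2)$ as $n\to\infty$. To sharpen this to $T(n,d)$, I would invoke the Erd\H os--Simonovits stability theorem, obtaining a partition $P=Q_1\sqcup\cdots\sqcup Q_d$ with $o(n^2)$ missing cross-edges and $o(n^2)$ inside-edges. Applying the geometric lemma to a transversal near-$d$-clique $\{q_i\in Q_i\}$ forces each $Q_i$ (outside a small exceptional set) to lie in a thin tube whose axis is roughly orthogonal to the affine hull of the remaining $q_j$'s; this is the qualitative form of the extremal construction described in the introduction. A short local replacement argument, together with the inductive hypothesis applied to the $(d-1)$-dimensional ``shape'' of the simplex of tube axes, then caps the total number of edges at exactly $T(n,d)$. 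I expect this last step to be the main obstacle: it is here that the ``$n$ sufficiently large'' hypothesis enters, and it is non-trivial to translate the soft combinatorial closeness to the Tur\'an graph into the rigid geometric picture of $d$ thin tubes without creating a residual $O(n)$ surplus of edges that one cannot shed.
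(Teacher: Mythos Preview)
The paper does not prove Theorem~\ref{emps2} directly (it is cited from \cite{EMPS}), but its $k=1$ case of Theorem~\ref{thmsmallkexact}, proved via Theorem~\ref{thmak}, is exactly this statement. The route taken there is quite different from yours. Rather than going through Erd\H os--Stone--Simonovits and then stability, the paper assumes $e(G)>T(n,\ell-1)$ with $\ell=A_1(d)+1=d+1$ and invokes Erd\H os's theorem (Theorem~\ref{erd69}) to find a \emph{single edge} lying in $\delta n^{\ell-2}$ copies of $K_\ell$. A random sample together with the hypergraph K\H ov\'ari--S\'os--Tur\'an theorem (Theorem~\ref{Erd}) then extracts a $K_{1,1,m,\ldots,m}$ with all pairwise distances large; refining it produces an almost $(d-1,\alpha)$-flat $\varepsilon$-nearly $1$-distance set of size $d+1$, contradicting $A_1(d)=d$. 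This reaches the exact bound $T(n,d)$ in one shot, with no stability and no induction on $d$.

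Your Steps 1--2 are essentially fine (one small correction: the intersection of the $d$ spherical shells centred at $r_1,\ldots,r_d$ has \emph{two} pieces, one on each side of the affine hull of $r_1,\ldots,r_d$, each of bounded diameter; the packing bound on $|C_0|$ still follows). Step~3, however, has a genuine gap. Forbidding $K_{s,\ldots,s}$ alone cannot give the exact bound $T(n,d)$: the extremal number of $K_{s,\ldots,s}$-free graphs exceeds $T(n,d)$ by a polynomial in $n$ (plant a dense $K_{s,s}$-free bipartite graph inside one class of the Tur\'an graph). So the geometry must be brought back in to eliminate the surplus, and your ``local replacement plus induction on $d$'' is too vague to carry this weight. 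In particular, the claim that each $Q_i$ lies in a thin tube is not automatic: only those vertices of $Q_i$ adjacent to a \emph{fixed} transversal $\{q_j\}_{j\ne i}$ are constrained by the shell argument, and stability allows $o(n)$ exceptional vertices per class to sit anywhere. Turning this into a clean count that lands exactly on $T(n,d)$ is precisely the hard part; the paper's use of Theorem~\ref{erd69}, which is sharp at the threshold $T(n,\ell-1)+1$, is what bypasses it.
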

In the case of several intervals, Erd\H os, Makai, and Pach \cite{EMP} proved the following theorem.
\begin{thm}[\cite{EMP}]\label{emp}
Let $P$ be a separated set of $n$ points in the plane, and let $k$ be a positive integer. Then for any $\gamma>0$ and sufficiently large $n$ the following is true. For any $t_1,\ldots,t_k>0$, the number of pairs of points in $P$ whose distance lies in $[t_1,t_1+1]\cup\ldots\cup[t_k,t_k+1]$ is at most 
\[\frac {n^2}2\Big(1-\frac 1{k+1}+\gamma\Big).\]
This estimate is tight for every fixed $k$ and for some $t_1=t_1(k,n),\ldots,t_k=t_k(k,n)$.
\end{thm}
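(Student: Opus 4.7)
I plan to prove Theorem~\ref{emp} via a supersaturation argument combined with a planar rigidity count. Let $G$ be the graph on $P$ whose edges are the near-equal-distance pairs. If $|E(G)| > \tfrac{n^2}{2}\bigl(1 - \tfrac{1}{k+1} + \gamma\bigr)$, then by the Erd\H{o}s--Simonovits supersaturation theorem there exists $\delta = \delta(\gamma, k) > 0$ such that, for $n$ large, $G$ contains at least $\delta \binom{n}{k+2}$ copies of $K_{k+2}$. To derive a contradiction, I will establish a matching geometric upper bound of $O_k(n^{k+1})$ on the number of $K_{k+2}$'s in $G$; the two estimates become incompatible once $n$ is large enough.

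For the geometric upper bound, fix a $(k+1)$-tuple $p_1, \ldots, p_{k+1}$ in $P$. Any $p \in P$ extending this to a $K_{k+2}$ must lie in $\bigcap_{i=1}^{k+1} F_i$, where $F_i := \bigcup_{j=1}^k \{q : \|q - p_i\| \in [t_j, t_j + 1]\}$ is the ``fat annulus'' around $p_i$ consisting of $k$ width-$1$ annuli. In $\R^2$, the intersection of two width-$1$ annuli around distinct centers whose defining circles meet transversally consists of two lens-shaped regions of area $O(1)$, so $F_1 \cap F_2$ is a union of at most $O(k^2)$ such regions of total area $O_k(1)$. Intersecting with the further constraints $F_3, \ldots, F_{k+1}$ can only shrink this feasible region, and the separation hypothesis guarantees only $O_k(1)$ points of $P$ lie in it. Thus there are $O_k(1)$ valid choices of $p$, yielding at most $O_k(n^{k+1})$ copies of $K_{k+2}$.

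The hardest part will be the degenerate sub-case where two of the bounding circles (say around $p_i, p_j$ with chosen radii $t_l, t_m$) are tangent, i.e.\ $\|p_i - p_j\|$ is close to $t_l + t_m$ or $|t_l - t_m|$. The pairwise intersection can then elongate along the tangent direction to area $O(\sqrt t)$; one must argue that a third fat-annulus constraint generically cuts such an elongated lens transversally, restoring area $O(1)$. A separate degeneracy arises when $p_1, \ldots, p_{k+1}, p$ are collinear: the gradient vectors from $p$ to the $p_i$'s all align and the 2D area argument breaks, but pigeonholing two of the distances $\|p - p_i\|, \|p - p_j\|$ into the same interval forces $p$ to be the midpoint of $p_i$ and $p_j$, leaving $O(k^2)$ choices. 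Together these cases combine to give the bound $O_k(n^{k+1})$, completing the contradiction. For tightness, take $k+1$ well-separated clusters of $\lfloor n/(k+1) \rfloor$ points each, each of diameter $o(\sqrt{t_j})$, positioned so that the $\binom{k+1}{2}$ inter-cluster distances all lie in $\bigcup [t_j, t_j+1]$; this realizes $\tfrac{n^2}{2}\bigl(1 - \tfrac{1}{k+1}\bigr) - o(n^2)$ near-equal-distance pairs.
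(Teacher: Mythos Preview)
The paper does not itself prove Theorem~\ref{emp}; it is quoted from \cite{EMP} as background. The closest the paper comes is the proof of the more general Theorem~\ref{manynearly}, which for $d=2$ would yield Theorem~\ref{emp} once one knows $N_k(2)=k+1$ (and the paper only establishes this for $k\le 3$, via Theorem~\ref{nearly k}). That proof takes a different route from yours: rather than bounding the number of $K_{k+2}$'s directly by a geometric area argument, it uses supersaturation together with Erd\H os's hypergraph theorem (Theorem~\ref{Erd}) to extract a large complete multipartite subgraph whose parts are far apart, and then refines it to exhibit a $(d-1,\alpha)$-flat $\varepsilon$-nearly $k$-distance set of forbidden size. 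Your approach---supersaturation plus a direct geometric upper bound on the number of $(k+2)$-cliques---is essentially the strategy of the original paper \cite{EMP}, and is sound in outline.

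That said, your handling of the degenerate cases has real gaps. First, the collinear argument is wrong as written: take $p_i=i-1$ on a line for $i=1,\dots,k+1$, set $t_j=j$, and let $p=k+1$. Then $\|p-p_1\|=k+1$ and $\|p-p_2\|=k$ both lie in $[t_k,t_k+1]$, yet $p$ is nowhere near the midpoint of $p_1$ and $p_2$. (The conclusion that there are only $O_k(1)$ collinear extensions is still correct, but for a different reason: all $k+2$ points are forced into an interval of length $O(k)$.) Second, and more seriously, the near-tangent case is the crux of the whole argument and you have not actually dealt with it. When two of the defining circles are nearly tangent the region $F_i\cap F_j$ can have area of order $\sqrt{t}$; you assert that a third annular constraint ``generically'' cuts this down, but you must rule out configurations in which several of the $p_i$'s produce annuli that are simultaneously nearly tangent along the same elongated lens. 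Making this rigorous is exactly where the work lies in \cite{EMP}, and your sketch does not supply it. Finally, in your tightness construction the clusters must be segments \emph{orthogonal} to the directions joining cluster centres (as in Proposition~\ref{propextend}); a generic planar cluster of diameter $o(\sqrt{t_j})$ perturbs inter-cluster distances by the order of its diameter, not by $O(1)$.
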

Actually, they have proved something stronger: the allowed intervals in Theorem \ref{emps1}, \ref{emp2} and \ref{emp} are of the form $[t_i,t_i+c\sqrt{n}]$, $[t_i,t_i+c(d)n^{1/d}]$ and $[t_i,t_i+c(k,\gamma)\sqrt{n}]$ respectively, where $c,c(d)$ and $c(k,\gamma)$ are constants (only depending on their arguments).

In
\cite{MPS}
Makai, Pach and Spencer surveyed the results on this topic and also stated the following theorem that was supposed to appear in a follow-up paper by Erd\H os, Makai and Pach. 

\begin{thm}[Theorem 2.4 in \cite{MPS}, stated without proof]\label{emp2} Let $d\ge 2$ be an integer and let $P$ be a separated set of $n$ points in $\mathbb R^d$. For any $\gamma>0$ and
sufficiently large $n$ the following is true. For any $t_1,t_2>0$ the number of pairs of points in $P$ whose distance lies in $[t_1,t_1+1]\cup[t_2,t_2+1]$ is at most
\[\frac {n^2}2\left (1-\frac 1{m_2(d-1)}+\gamma \right ).\]
This bound is asymptotically tight 
for some $t_1=t_1(n)$ and $t_2=t_2(n)$.
\end{thm}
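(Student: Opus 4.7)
The plan is to follow the template of Theorem~\ref{emp} (two dimensions, many intervals), but work in $\mathbb{R}^d$ with only two intervals, replacing the planar scalar $m_2(1)+1 = 3$ by $s+1$ where $s := m_2(d-1)$. The critical new geometric input is the equality $M_2(d-1) = m_2(d-1)$ supplied by Theorem~\ref{thm000}(i); it is precisely what allows approximate 2-distance configurations in $\mathbb{R}^{d-1}$ to be promoted to the exact upper bound $m_2(d-1)$.

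First, define the graph $G$ on $P$ whose edges are the pairs at distance in $[t_1,t_1+1]\cup[t_2,t_2+1]$, and assume for contradiction that $|E(G)| > \tfrac{n^2}{2}(1-\tfrac{1}{s}+\gamma)$. A volume estimate on the two annuli of inner radii $t_i$ and thickness $1$ forces $P$ into a ball of diameter $O(t)$ with $t := \min(t_1,t_2) \gg 1$; consequently all edge-pair distances are $\Theta(t)$. Tur\'an's theorem combined with supersaturation (Erd\H os--Simonovits) then yields a balanced $(s+1)$-partite blow-up inside $G$: pairwise disjoint classes $V_1,\ldots,V_{s+1}\subset P$, each of size $\Omega_\gamma(n)$, with nearly every cross-pair forming an edge.

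The geometric core is a \emph{flatness} claim: any transversal $(s+1)$-clique lies in a slab of width at most $\eta t$ about some affine hyperplane, where $\eta \to 0$ as $t\to\infty$. Indeed, rescaling such a clique by $1/t$ produces an $O(1/t)$-nearly 2-distance set of size $s+1$ in $\mathbb{R}^d$; if its width exceeded $\eta$ in every direction, an orthogonal projection onto a suitable hyperplane would preserve distances up to additive $o(1)$ and still yield an $\varepsilon$-nearly 2-distance set of size $s+1$ in $\mathbb{R}^{d-1}$, contradicting $M_2(d-1) = s$ by Theorem~\ref{thm000}(i). Pigeonholing on the compact set of approximating hyperplane directions across the $\Omega_\gamma(n^{s+1})$ transversal cliques, one passes to subclasses $V_1',\ldots,V_{s+1}'$ of size $\Omega(n)$ whose transversal cliques all share a common approximating hyperplane $H$. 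Projecting $V_1'\cup\cdots\cup V_{s+1}'$ into $H$ and rescaling produces a separated set in $\mathbb{R}^{d-1}$ of linear size with $(1 + \Omega(\gamma))\tfrac{n^2}{2}$ near-equal distance pairs, which violates Theorem~\ref{emps2} (for one interval) or an inductive hypothesis on $d$. The matching lower bound generalises the cylinder example from Section~\ref{sec12}: take $s$ translates of a large separated set in $\mathbb{R}^{d-1}$ stacked at carefully chosen heights along a transversal direction so that every cross-translate distance lands in $[t_1,t_1+1]\cup[t_2,t_2+1]$.

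The main obstacle is the quantitative form of the flatness claim and its interplay with the Tur\'an/supersaturation step. The parameter hierarchy $n \gg t \gg 1/\eta \gg 1/\varepsilon(\gamma,d)$ has to be chosen so that the projections remain distance-preserving within the width of the two intervals, so that pigeonholing on hyperplane directions loses only a constant factor in class sizes, and so that the descent into $\mathbb{R}^{d-1}$ delivers a genuine contradiction rather than a tautology.
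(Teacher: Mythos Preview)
Your flatness claim is the heart of the argument, and it fails. You assert that any transversal $(s+1)$-clique (after rescaling, an $O(1/t)$-nearly $2$-distance set of size $m_2(d-1)+1$ in $\mathbb{R}^d$) must lie in a slab of width $\eta t$. The proposed proof is logically inverted: if the width exceeds $\eta$ in \emph{every} direction, then projection onto \emph{any} hyperplane distorts distances by $\Omega(\eta)$, not by $o(1)$, so you cannot project down and invoke $M_2(d-1)=s$. Worse, the claim itself is false: for $d=3$ we have $s=m_2(2)=5$, and the regular octahedron is an exact $2$-distance set of size $6=s+1$ in $\mathbb{R}^3$ that is not close to any hyperplane. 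So a single transversal clique carries no flatness information by itself, and there is no route from ``nearly $2$-distance in $\mathbb{R}^d$'' back to $\mathbb{R}^{d-1}$ without additional structure.

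The paper's proof extracts exactly that extra structure. After the supersaturation and K\H{o}v\'ari--S\'os--Tur\'an steps it does not stop at a transversal $\{p_1,\dots,p_\ell\}$; it keeps two points $\{p_i,q_i\}$ in each part, refined so that for every $j\neq i$ the distances $\|p_i-p_j\|$ and $\|q_i-p_j\|$ lie in the \emph{same} interval and the angle $\angle p_ip_jq_i$ is small. An elementary triangle computation then forces $\angle q_ip_ip_j \in [\tfrac{\pi}{2}-\alpha,\tfrac{\pi}{2}+\alpha]$ for every $j$, so $\{p_1,\dots,p_\ell\}$ is $(p_i,d-1,\alpha)$-flat with respect to the hyperplane orthogonal to $p_i-q_i$. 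This is how flatness is manufactured, and why the natural intermediate quantity is $N_2(d)$ (flat nearly $2$-distance sets in $\mathbb{R}^d$) rather than $M_2(d-1)$; the equality $N_2(d)=m_2(d-1)$ from Theorem~\ref{nearly k} then closes the argument. Your side claims also need repair: $P$ need not sit in a ball of diameter $O(t)$ (non-edge pairs are unconstrained), nothing forces $t_2/t_1=O(1)$, and the lower bound should blow up a $2$-distance set of size $m_2(d-1)$ in a hyperplane (as in Proposition~\ref{propextend}), not stack $s$ translates along a single axis.
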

The proof of this theorem was kept in the form of handwritten notes until recently, when Makai and Pach \cite{EMP2} placed on the arXiv a typed version of those notes (joint with Paul Erd\H os). There, they prove Theorem~\ref{emp2} in a stronger form.

\begin{thm}[\cite{EMP2}, Theorem~1]\label{emp3}
  The statement of Theorem~\ref{emp2} is true. Moreover, even with intervals of the form $[t_i,t_i+c_d n^{1/d}]$, where $c_d>0$ is a constant that depends on $d$, the number of such pairs is at most $$T(n,m_2(d-1))=\frac {n^2}2\Big(1-\frac 1{m_2(d-1)}\Big)+O(1)$$
if $d\notin\{ 4,5\}$.
\end{thm}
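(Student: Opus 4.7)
The plan is to prove both a matching construction and a Turán-type upper bound for the graph $G$ on $P$ whose edges are pairs at distance in $[t_1,t_1+c_dn^{1/d}]\cup[t_2,t_2+c_dn^{1/d}]$, and then conclude $|E(G)|\le T(n,m_2(d-1))+O(1)$.

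\textbf{Construction.} Take an optimal $2$-distance set $V=\{v_1,\ldots,v_{m_2(d-1)}\}\subset\R^{d-1}$ with distance values $s_1<s_2$. Scale $V$ by a factor $\lambda\asymp n^{2-1/d}$, embed into $\R^d$ via $v_i\mapsto(\lambda v_i,0)$, and replace each embedded point by a vertical column of $\lceil n/m_2(d-1)\rceil$ points at consecutive unit heights in the last coordinate. Using $\sqrt{\lambda^2 s_a^2+h^2}-\lambda s_a\le h^2/(2\lambda s_a)$ with $|h|\le n/m_2(d-1)$, one checks that each inter-column pair has distance within $c_d n^{1/d}$ of $\lambda s_a$ for the appropriate $a\in\{1,2\}$. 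Choosing $t_a=\lambda s_a$ (generic enough that integer intra-column distances miss the intervals) yields exactly $T(n,m_2(d-1))$ qualifying pairs in a separated set.

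\textbf{Upper bound.} Assume for contradiction $|E(G)|>T(n,m_2(d-1))+C$ for a large constant $C=C(d)$. By a supersaturation version of Tur\'an's theorem (Erd\H os--Simonovits type), $G$ contains a clique $K$ of size $m_2(d-1)+1$, and in fact many such cliques so that at least one survives any bounded-size cleanup. The points of $K$ are pairwise at distance in the two allowed intervals and pairwise separated by at least $1$; since $|K|$ is a constant depending only on $d$, after rescaling by $1/(t_2+c_dn^{1/d})$ the configuration has bounded diameter and every pairwise distance lies in one of two intervals of width $O(n^{1/d}/t_2)$.

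\textbf{Reduction to $\R^{d-1}$ via $\alpha$-flatness.} The core geometric step is to show $K$ must be close to a hyperplane. Invoking the $\alpha$-flatness machinery promised in Section~\ref{sec131}, the combination of unit separation and interval widths of order $n^{1/d}$ forces any clique of size $m_2(d-1)+1$ in $G$ to be $\alpha$-flat for $\alpha\to 0$ as $n\to\infty$: a truly $d$-dimensional configuration of separated points with only two near-distances would spread the pairwise distances across many narrow bands, contradicting there being only two. Once $K$ is sufficiently flat, orthogonal projection onto a best-fit hyperplane $H\cong\R^{d-1}$ distorts each pairwise distance by $o(1)$, so the image is an $\varepsilon'$-nearly $2$-distance set in $\R^{d-1}$ with $\varepsilon'\to 0$. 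By the $k=2$ case of Theorem~\ref{thm000}(i), which gives $M_2(d-1)=m_2(d-1)$, the image has at most $m_2(d-1)$ points, contradicting $|K|=m_2(d-1)+1$.

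\textbf{Main obstacle.} The quantitative flatness step is by far the hardest part: one must control regimes where $t_1$ and $t_2$ have very different magnitudes (including $t_i$ small compared to $n^{1/d}$, where the rescaled intervals are not thin at all, and one must argue directly from incompressibility of separated point sets), and one must handle the exceptional dimensions $d\in\{4,5\}$ where the extremal 2-distance sets have an unusual algebraic structure (e.g.\ $m_2(4)=10$ from the Petersen-like construction) that obstructs the naive flatness--projection argument; this is precisely why the theorem excludes them. A secondary technical issue is upgrading ``no clique of size $m_2(d-1)+1$'' into the sharp edge count with only $O(1)$ additive error, which requires the stability/supersaturation framework to survive the passage through the geometric argument.
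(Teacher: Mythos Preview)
Your construction is fine and matches the paper's Proposition~\ref{propextend}. The upper bound, however, has a genuine gap at exactly the point you flag as the ``main obstacle'', and your intuition for why flatness should hold is actually false.

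You try to show that any $(m_2(d-1)+1)$-clique $K$ in $G$ must be close to a hyperplane, so that projecting to $\R^{d-1}$ contradicts $M_2(d-1)=m_2(d-1)$. But such cliques need \emph{not} be flat. For $d=3$ take six points forming a near-regular octahedron, scaled so that its two distances land in the two intervals: this is a genuinely $3$-dimensional $K_6$ in $G$, of size $m_2(2)+1=6$, with no tendency whatsoever to lie near a plane. Your heuristic ``a truly $d$-dimensional configuration of separated points with only two near-distances would spread the pairwise distances across many narrow bands'' is simply wrong --- $m_2(d)>m_2(d-1)$ in general, so full-dimensional nearly $2$-distance sets of size $m_2(d-1)+1$ exist. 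Consequently the whole plan ``no large clique $\Rightarrow$ Tur\'an bound'' collapses: large cliques \emph{can} be present even when $|E(G)|\le T(n,m_2(d-1))$.

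The paper does not attempt to flatten a bare clique. Instead (see Theorem~\ref{thmak} and Lemma~\ref{completemulti}) it extracts from $G$ a complete $\ell$-partite subgraph $K_{1,1,m,\ldots,m}$ with $\ell=A_2(d)+1$ parts, using Erd\H os's theorem on an edge in many $\ell$-cliques, a random-sampling step, and the hypergraph K\H ov\'ari--S\'os--Tur\'an theorem. After further refinement each non-singleton part is shrunk to a pair $\{p_i,q_i\}$, and the crucial point is that the \emph{within-part} vector $p_i-q_i$ is forced to be nearly orthogonal to every cross-part vector $p_i-p_j$ (since $p_i$ and $q_i$ are at nearly equal distance from each $p_j$). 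This is what produces flatness: the transversal $\{p_1,\ldots,p_\ell\}$ is $(p_i,d-1,\alpha)$-flat with respect to $(p_i-q_i)^\perp$ for every $i\ge 3$, hence \emph{almost} $(d-1,\alpha)$-flat (at all but the two singleton parts). One then needs the almost-flat bound $A_2(d)=m_2(d-1)$, proved in Theorem~\ref{nearly k}~(ii), not merely $M_2(d-1)=m_2(d-1)$.

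Finally, your explanation of the $d\in\{4,5\}$ exclusion is off: it was a technical artefact of the original argument in \cite{EMP2}, not a real obstruction, and the present paper's method (Theorem~\ref{thmsmallkexact}) removes it and covers all $d$.
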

\cite{EMP2}, Theorem 2 also considered a less restrictive variant of the notion of nearly $k$-distance sets. Let us denote by $W_k(d)$ the maximum cardinality $N$ such that for any $\varepsilon>0$ there exist $t_1\leq \dots \leq t_k$ and a set $S\subset \mathbb{R}^d$ with $|S|=N$ such that for any $p_1\neq p_2\in S$ we have
\[\|p_1-p_2\|\in \bigcup_{i=1}^k[t_i,(1+\varepsilon)t_i.]\]
In \cite{EMP2} they proved that $W_k(d)=(d+1)^k$, showing that ``weakly'' nearly $k$-distance sets can be much bigger than $k$-distance sets.

\subsection{Nearly $k$-distance sets}\label{sec13}
We say that a separated set of points $P$ is an \emph{$\varepsilon$-nearly $k$-distance set} with distances $1<t_1<\dots<t_k$ if
\[\|p_1-p_2\|\in\bigcup_{i=1}^k[t_i,t_i+\varepsilon]\] holds for any $p_1,p_2\in P$ with $p_1\neq p_2$. By analogy with $m_k(d)$, for
$k\geq 1$ and $d\geq  0$ let $M_k(d)$ denote the largest number $M$ such that for any $\varepsilon>0$ there exists an $\varepsilon$-nearly $k$-distance set in $\mathbb{R}^d$ of cardinality $M$.\footnote{Note that the case of $d=0$ is trivial: we have $m_k(0)=M_k(0)=1$ for any $k$. However, we need to introduce it  for technical reasons.} Obviously, $M_k(d)\ge m_k(d)$.  An expression equivalent to $M_k(d)$ occurs in \cite[page 19]{EMP2}, where they speculate that ``for k fixed, d sufficiently large probably $M_k(d)=m_k(d)$.'' We confirm this later.

Note that the difficulty in relating the maximal cardinalities of $k$-distance sets and nearly $k$-distance sets lies in the fact that, in nearly $k$-distance sets,  distances of different order of magnitude may appear. If we additionally assume that $\frac{t_{i+1}}{t_i}\leq K$ for some universal constant $K$ in the definition of nearly $k$-distance sets, a compactness argument would immediately imply that $m_k(d)$ equals this modified $M_k(d)$ (see Lemma~\ref{limit} below).

For $d,k\geq 1$ let $M_k(d,n)$ denote the largest number $M$ for which there is a  separated set $S\subseteq \mathbb{R}^d$ of $n$ points and $k$ real numbers $1\leq t_1\leq \dots \leq t_k$ such that the number of pairs $\{p_1,p_2\}$ with $p_1,p_2\in S$ satisfying
\begin{equation}\label{eq00}\|p_1-p_2\|\in\bigcup_{i=1}^k[t_i,t_i+1]\end{equation}
is at least $M$.

In these terms, Theorems~\ref{emps1}--\ref{emp3} determine (or asymptotically determine) the quantity $M_k(d,n)$ for $k=1,2$ and $d\geq 2$, as well as for $k\geq 1$ and $d=2$, provided that $n$ is large enough. It is natural to state the following general problem.
\begin{prb}\label{prb1}
  For any fixed $k,d\ge 1$ and $n\ge n_0(k,d)$, determine, at least asymptotically, the value of $M_k(d,n)$.
\end{prb}

\subsubsection{Flat sets}\label{sec131}

A \emph{subspace of a Euclidean space}
means a linear subspace. A \emph{plane of a Euclidean space} means an affine plane.

For reasons that appear to be technical, let us also introduce the following notions.
We usually use the notation $\Gamma$ (or $\Gamma_i$) to denote linear subspaces of $\mathbb{R}^d$, and $\Lambda$ (or $\Lambda_i)$ for affine planes.
The angle between a vector $v\neq 0$ and a non-zero linear subspace $\Gamma$ is the smallest angle that appears between $v$ and the vectors in $\Gamma\setminus \{0\}$. For two points $p,q\in \mathbb{R}^d$ we denote by $p-q$ the vector pointing from $q$ to $p$.

For $1\leq d\leq d'$ we say that a set of vectors $V\subseteq \mathbb{R}^{d'}\setminus \{0\}$ is \emph{$(d,\alpha)$-Flat} if there exists a linear subspace $\Gamma$ of dimension $d$ such that the angle between any $v\in V$ and $\Gamma$ is at most $\alpha$. (For $d=d'$
this is considered to be true for any
$\alpha\geq 0$.) If $\Gamma$ is such subspace, we say that $V$ is  \emph{$(d,\alpha)$-Flat with respect to $\Gamma$}.
Let $P\subseteq \mathbb{R}^{d'}$ be a set of points and $p$ be a point in $P$. We say that $P$ is \emph{$(p,d,\alpha)$-flat} (with respect to a linear $d$-subspace $\Gamma_p$) if  $\setbuilder{p-q}{q\in P\setminus \{p\}}$ is $(d,\alpha)$-Flat (with respect to $\Gamma_p)$.
We call a set $P$ \emph{$(d,\alpha)$-flat} if $P$ is $(p,d,\alpha)$-flat for every $p\in P$.
We say $P$ is \emph{globally $(d,\alpha)$-flat} if $\setbuilder{p-q}{p,q\in P, p\neq q}$ is $(d,\alpha)$-Flat.
If $|P|\leq 1$, then we define $P$ to be $(p,0,\alpha)$-flat (for any $p\in P$) and $(0,\alpha)$-flat.

Note that there is a difference between flatness and global flatness. 
For any $d\geq 2$ and $\beta< \arcsin d^{-1/2}$, $(d,\alpha)$-flatness for any $\alpha$
does not in general imply global $(d,\beta)$-flatness.
For an example for $d=2$, consider the following set in $\R^3$: $\{(0,0,1),(0,0,0),(K,0,0),(K,1,0)\}$, where $K=K_0(\alpha,\beta)$ is sufficiently large. However, if for some universal constant $K$ a set $S$ is $(p,d,\alpha)$-flat for some $p\in S$ and $\frac{\|p_1-p_2\|}{\|q_1-q_2\|}\leq K$ for each $p_1,p_2,q_1,q_2\in S$ with $q_1\neq q_2$, then $S$ is globally $(d,K'\alpha)$-flat, where $K'$ is a constant depending on $K$ and $d$.

For $0\leq d \leq d'$ let $N_k(d',d)$ be the largest number $N$ such that for every $\varepsilon,\alpha>0$ there exists a $(d,\alpha)$-flat $\varepsilon$-nearly $k$-distance set in $\mathbb{R}^{d'}$ of cardinality $N$. Note that $N_k(d',0)=1$. For $d\geq 1$ we denote $N_k(d):= N_k(d,d-1)$. Then we have $M_k(d-1)\le N_k(d)\leq  M_k(d)$. Indeed, any $\varepsilon$-nearly $k$-distance set in $\mathbb R^{d-1}$ is a $(d-1,0$)-flat $\varepsilon$-nearly $k$-distance set in $\mathbb R^d$.

Surprisingly, the behaviour of $M_k(d,n)$ is asymptotically determined by the value of $N_k(d)$ (see Proposition~\ref{propextend} and Theorem~\ref{manynearly}), thus the asymptotic solution of Problem~\ref{prb1} reduces to the following problem.
\begin{prb} For any $k,d\geq 1$ determine $N_k(d)$.
\end{prb}

Below, we state Conjecture~\ref{conj1}, which relates the behaviour of $M_k(d)$, $N_k(d)$  and $m_k(d)$.

\subsection{Constructions of nearly $k$-distance sets}\label{sec14}

In this subsection, we relate the quantities $M_k(d),  M_k(n,d), N_k(d)$, and $m_k(d)$.
For $d\geq 0$ and $k\geq 1$ let us define 
\begin{equation}\label{eqm'} M'_k(d):=\max \setbuilder{\prod_{i=1}^s m_{k_i}(d_i)}{\sum_{i=1}^sk_i=k, \sum_{i=1}^sd_i=d}.\end{equation}

\begin{gypo}\label{conj1} $N_k(d+1)=M_k(d)=M'_k(d)$ holds for all but finitely many pairs $k,d\ge 1$.
\end{gypo}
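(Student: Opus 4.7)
The plan is to establish $N_k(d+1) = M_k(d) = M'_k(d)$ by combining constructive lower bounds with an inductive upper bound. The easy inequalities $M'_k(d) \leq M_k(d) \leq N_k(d+1)$ are both by construction: the right one is the trivial embedding of an $\ep$-nearly $k$-distance set in $\R^d$ as a $(d,0)$-flat subset of $\R^{d+1}$, as already observed in Section~\ref{sec131}. For the left one, I would take $k_i$-distance sets $S_i \subset \R^{d_i}$ of size $m_{k_i}(d_i)$ with $\sum k_i = k$ and $\sum d_i = d$, rescale the $i$-th factor at scale $T^{-i}$ with $T$ enormous, and form the Cartesian product $S_1 \times \cdots \times S_s \subset \R^d$. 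Each pairwise distance is then dominated by the coarsest coordinate at which the two points differ, producing at most $\sum k_i = k$ distance targets and satisfying the $\ep$-nearly property once $T$ is large. Hence it suffices to prove the hard direction $N_k(d+1) \leq M'_k(d)$.

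I would do this by induction on $k$. The base case $k=1$ is a direct compactness argument: a $(d,\alpha)$-flat nearly equilateral set in $\R^{d+1}$ degenerates as $\alpha,\ep \to 0$ to an equilateral set in a $d$-plane, hence has size at most $d+1 = M'_1(d)$. For the step, let $P \subset \R^{d+1}$ be $(d,\alpha)$-flat and $\ep$-nearly $k$-distance with targets $1 \leq t_1 \leq \cdots \leq t_k$, fix a large constant $K = K(k,d)$, and split on whether $t_k/t_1 \leq K$ or some $t_{j+1}/t_j$ is very large.

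\emph{Case A (bounded ratios).} If $t_k/t_1 \leq K$, then, for $\ep$ small, all pairwise ratios $\|p_1-p_2\|/\|q_1-q_2\|$ are bounded, so the flatness-promotion remark in Section~\ref{sec131} upgrades $(d,\alpha)$-flatness to global $(d,\gamma\alpha)$-flatness with $\gamma = \gamma(K,d)$. Orthogonally projecting $P$ onto the reference $d$-plane shifts each pairwise distance by $O((\gamma\alpha)^2 t_k)$, so the image is a nearly $k$-distance set in $\R^d$ of size $|P|$. Sending $\ep,\alpha \to 0$ with $|P|$ fixed and invoking Lemma~\ref{limit} produces a genuine $k$-distance set of size $|P|$ in $\R^d$, so $|P| \leq m_k(d) \leq M'_k(d)$. \emph{Case B (large gap).} Otherwise some $t_{j+1}/t_j$ is enormous; cluster $P$ via $p \sim q \iff \|p-q\| \leq t_j + \ep$ into classes $C_1,\ldots,C_m$ with centroids $c_i$. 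Each $C_i$ is $\ep$-nearly $j$-distance, while $\{c_1,\ldots,c_m\}$ is nearly $(k-j)$-distance with negligible error (cluster diameters $\leq k t_j$ are swamped by $t_{j+1}$). The key \emph{structural claim} is the existence of $d_1 + d_2 = d$ such that each $C_i$ is $(d_1,\alpha')$-flat and $\{c_i\}$ is $(d_2,\alpha')$-flat with $\alpha' \to 0$. Granted this, the inductive hypothesis yields $|C_i| \leq M'_j(d_1)$ and $m \leq M'_{k-j}(d_2)$, so $|P| \leq M'_j(d_1) M'_{k-j}(d_2) \leq M'_k(d)$ directly from the definition of $M'_k$.

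The \textbf{main obstacle} is the structural claim in Case B. Local $(d,\alpha)$-flatness descends easily from $P$ to each $C_i$ and to the centroid set, but aligning all clusters within a common $d_1$-plane while simultaneously forcing the centroids into a complementary $d_2$-plane is delicate: as the counterexample in Section~\ref{sec131} shows, local flatness does not imply global flatness in general. Heuristically, the large gap should rigidify the configuration---from the vantage of $c_{i'}$, internal deformations of $C_i$ perturb inter-cluster distances by $O(t_j)$, which must fit into $k-j$ bands of width $\ep$ at scale $\geq t_{j+1}$---but converting this heuristic into a quantitative alignment is where substantial technical work is needed. This is presumably why the conjecture is hedged to ``finitely many exceptional pairs'' rather than being stated for all $(k,d)$, and is where small cases and small dimensions would most likely resist this general attack.
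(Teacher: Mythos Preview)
The statement is Conjecture~\ref{conj1}, which the paper does \emph{not} prove in full; it remains open. The paper establishes only the special cases recorded in Theorem~\ref{nearly k} ($k\le 3$ for all $d$, or $d\ge d(k)$ for each fixed $k$), and in those cases it actually proves the stronger equality $A_k(d',d)=m_k(d)$, whence all four quantities in~\eqref{eqrelation} collapse to $m_k(d)$. So there is no complete proof in the paper to compare your proposal against.

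Your outline is nonetheless close in spirit to the paper's attack on those special cases. The easy inequalities and Case~A are exactly Proposition~\ref{propcombine} and Lemma~\ref{limit}(i). Your Case~B large-gap decomposition is the red/blue partition of Lemma~\ref{clique}, and your ``structural claim''---that the coarse set and each fine cluster are simultaneously flat in complementary dimensions $d_1+d_2=d$---is precisely what Lemma~\ref{smallcliques} is designed to deliver. But Lemma~\ref{smallcliques} carries an essential side hypothesis (condition~2 there): for each coarse point $b_1$, all distances from $b_1$ to a given fine cluster $R$ must lie in a single interval of width $\beta^{d'+1}$. This holds when the gaps among the coarse targets $t_{j+1},\ldots,t_k$ are themselves large relative to the fine-cluster diameters, but \emph{fails} when two coarse targets are close (cf.\ Case~2.2 in the proof of Theorem~\ref{nearly k}(ii), where $t_3/t_2\le 1+2/K$). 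In that regime the dimension splitting breaks down, and the paper falls back either on ad~hoc geometry specific to $k=3$ (Claim~\ref{cla23} and the lemma following it) or on numerical slack from $m_k(d)\ge\binom{d+1}{k}$, which is only available once $d$ is large. This is also why the paper replaces a single induction on $k$ by the double induction of Lemma~\ref{induction} (on $k$ and on the position $i$ of the largest gap).

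The gap you flag is therefore real and is exactly why the conjecture is still open. Note too that your scheme, were the structural claim true, would give $N_k(d+1)\le M'_k(d)$ for \emph{all} $(k,d)$, since the product inequality $M'_j(d_1)\,M'_{k-j}(d_2)\le M'_k(d)$ is immediate from the definition of $M'_k$. Hence the ``finitely many exceptions'' hedging in the conjecture is not what absorbs the difficulty; the obstruction lives in the structural claim itself.
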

We do not have have any examples with $N_k(d+1)>M_k(d)$ or $M_k(d)>M'_k(d)$, and we believe that the first equality should always hold. However, there are constructions, that we will describe later, that suggest there could be some examples with $M_k(d)>M'_k(d)$. In Theorem~\ref{nearly k} we show that the conjecture holds for every $k$ and sufficiently large $d$.

.

\begin{prop}\label{propcombine} $M_k(d)\geq M'_k(d)$ holds for every $ k,d\ge 1$.
\end{prop}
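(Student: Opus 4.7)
The plan is to establish Proposition~\ref{propcombine} by a product construction with rapidly increasing scales. Fix a decomposition $k=k_1+\ldots+k_s$ and $d=d_1+\ldots+d_s$ that attains the maximum in the definition of $M'_k(d)$, and for each $i$ choose a $k_i$-distance set $S_i\subseteq\R^{d_i}$ with $|S_i|=m_{k_i}(d_i)$, rescaled so that its minimum pairwise distance is $1$. Let $\{a_{i,1},\ldots,a_{i,k_i}\}$ be the set of distances occurring in $S_i$ and let $D_i:=\diam(S_i)$. Using the orthogonal decomposition $\R^d=\R^{d_1}\oplus\ldots\oplus\R^{d_s}$, I would set
\[
P:=(\lambda_1 S_1)\times(\lambda_2 S_2)\times\ldots\times(\lambda_s S_s),
\]
where $\lambda_1=1$ and $\lambda_2,\ldots,\lambda_s$ are chosen inductively to grow fast enough that distances cluster into $k$ intervals of length at most $\ep$.

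For two distinct points $x=(x_1,\ldots,x_s),y=(y_1,\ldots,y_s)\in P$, let $j$ be the largest index with $x_j\neq y_j$. Then $\|x_j-y_j\|=\lambda_j a_{j,\ell}$ for some $\ell\in[k_j]$, and
\[
\lambda_j^2 a_{j,\ell}^2\;\leq\;\|x-y\|^2\;=\;\lambda_j^2 a_{j,\ell}^2+\sum_{i<j}\|x_i-y_i\|^2\;\leq\;\lambda_j^2 a_{j,\ell}^2+\sum_{i<j}\lambda_i^2 D_i^2.
\]
Since $a_{j,\ell}\geq 1$, taking square roots gives
\[
\lambda_j a_{j,\ell}\;\leq\;\|x-y\|\;\leq\;\lambda_j a_{j,\ell}+\frac{C\,\lambda_{j-1}^2}{\lambda_j},
\]
where $C$ depends only on $D_1,\ldots,D_s$ and on $s$. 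Choosing the $\lambda_i$'s recursively so that $C\lambda_{i-1}^2/\lambda_i\leq\ep$ for every $i\geq 2$ ensures that $\|x-y\|$ lies in the interval $[\lambda_j a_{j,\ell},\lambda_j a_{j,\ell}+\ep]$. Thus $P$ is an $\ep$-nearly $k$-distance set with distance parameters $t_{i,\ell}:=\lambda_i a_{i,\ell}$, at most $\sum_i k_i=k$ in number; also $t_{i,\ell}\geq\lambda_i\geq 1$ and the minimum distance is $\lambda_1=1$, so $P$ is separated.

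Counting gives $|P|=\prod_i m_{k_i}(d_i)$, and since $\ep>0$ was arbitrary, this shows $M_k(d)\geq\prod_i m_{k_i}(d_i)$. Maximising over all admissible decompositions yields $M_k(d)\geq M'_k(d)$, as required. The only technical point is the recursive choice of the $\lambda_i$, which is completely routine once $\ep$ and the $D_i$ are fixed; I do not foresee any serious obstacle.
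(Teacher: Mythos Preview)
Your argument is correct and is essentially the same product construction as in the paper: take $k_i$-distance sets $S_i\subset\R^{d_i}$ of size $m_{k_i}(d_i)$ with rapidly increasing scales and form $S_1\times\cdots\times S_s$. The paper's proof just says ``distances in $S_i$ are much larger (in terms of $\varepsilon$) than the distances in $S_{i-1}$'' without spelling out the square-root estimate or the recursive choice of the $\lambda_i$, so your version is simply a more detailed write-up of the same idea.
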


\begin{proof}Let $\sum_{i=1}^sk_i=k$ and $\sum_{i=1}^sd_i=d$. Then there is an $\varepsilon$-nearly $k$-distance set in $\mathbb{R}^d$ of cardinality $\prod_{i=1}^s m_{k_i}(d_i)$ given by the following construction. For each $i$ let $S_i$ be a $k_i$-distance set in $\mathbb{R}^{d_i}$ of cardinality $m_{k_i}(d_i)$ and such that the distances in $S_i$ are much larger (in terms of $\varepsilon$) than the distances in $S_{i-1}$. Then $S_1\times\dots\times S_s$ is an $\varepsilon$-nearly $k$-distance set in $\mathbb{R}^d$ of cardinality $\prod_{i=1}^s m_{k_i}(d_i)$.
\end{proof}

{\bf Remark.} 
The example given above can be turned into a globally flat set in $\R^{d+1}$, providing a lower bound on $N_k(d+1)$. Actually, we do not know of any case when a flat set would provide a strictly better bound on $N_k(d)$ than a globally flat set. However, for some $k$ and $d$ there are extremal examples that are not  globally flat.  More precisely,
for any $\alpha,\varepsilon>0$ there is a $\beta>0$ depending only on $d$, such that we can construct a $(d-1,\alpha)$-flat $\varepsilon$-nearly $k$-distance set $S$ of cardinality $N_k(d)$ that is not globally $(d-1,\beta)$-flat.

Indeed, for example, for any $\alpha, \varepsilon > 0$ we can construct $(3,\alpha)$-flat $\varepsilon$-nearly $2$-distance sets of cardinality $N_2(4)=m_2(3)=6$ (see Theorem~\ref{nearly k}) in $\mathbb{R}^4$ as follows. Consider an equilateral triangle $\{p_1,p_2,p_3\}$ in $\mathbb{R}^4$ of side length $K$ spanning a $2$-dimensional plane $H$. For each $i\in [3]$ let $p_i-q_i$ be a vector of length $1$ orthogonal to $H$. It is not hard to check that $P=\{p_1,p_2,p_3,q_1,q_2,q_3\}$ is a $(3,\alpha)$-flat $\varepsilon$-nearly $2$-distance set if $K=K(\alpha,\varepsilon)$ is sufficiently large. However, if $p_1-q_1$ and $p_2-q_2$ are orthogonal, then $P$ is not globally $(3,\beta)$-flat, where $\beta$ can be taken to be $\pi/6$.

\vskip+0.1cm

We can give a lower bound on $M_k(d,n)$ in terms of $N_k(d)$. 
\begin{prop}\label{propextend}
For any fixed $k\geq 1, d\geq 2$ we have\begin{equation}
  M_k(d,n)\ge T\big(n,N_k(d)\big)\geq \frac {n^2}2\Big(1-\frac{1}{N_k(d)}\Big)+O(1).
\end{equation}
\end{prop}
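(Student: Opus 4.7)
The plan is to give a blow-up construction: start from a $(d-1,\alpha)$-flat $\varepsilon$-nearly $k$-distance set of size $N=N_k(d)$ in $\R^d$, rescale it so that all pairwise distances become much larger than $n$, and then inflate each point into a one-dimensional cluster whose direction is orthogonal to the local flat hyperplane at that point. Concretely, let $\alpha,\varepsilon,\lambda>0$ be parameters to be chosen later. Using the definition of $N_k(d)$, take a $(d-1,\alpha)$-flat $\varepsilon$-nearly $k$-distance set $S=\{p_1,\ldots,p_N\}\subset\R^d$ with distances in $\bigcup_{\ell=1}^k[t_\ell,t_\ell+\varepsilon]$, and for each $i$ fix a unit vector $v_i$ normal to a hyperplane $\Lambda_i$ witnessing the flatness at $p_i$, so that $|(p_i-p_j)\cdot v_i|\le\sin\alpha\cdot\|p_i-p_j\|$ for every $j\ne i$. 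Fix integer cluster sizes $m_1,\dots,m_N\in\{\lfloor n/N\rfloor,\lceil n/N\rceil\}$ summing to $n$, set $m:=\lceil n/N\rceil$, and put $P:=\bigcup_{i=1}^N C_i$ with $C_i:=\{\lambda p_i+j\,v_i:0\le j<m_i\}$.

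Within-cluster distances are integers $\ge 1$. For a between-cluster pair, expanding the square of the distance and bounding $|(p_i-p_j)\cdot v_i|$ and $|(p_i-p_j)\cdot v_j|$ via the flatness hypothesis, while using $t_1\ge 1$, a short computation gives
\[\bigl|\,\|\lambda p_i+j v_i-\lambda p_j-j'v_j\|-\lambda\|p_i-p_j\|\,\bigr|\le C\alpha m+\frac{Cm^2}{\lambda}\]
for some absolute constant $C$. Now set $\alpha:=1/(8Cm)$, $\lambda:=8Cm^2$, and $\varepsilon:=1/(4\lambda)$; then the right-hand side is at most $1/4$ and $\lambda\varepsilon\le 1/4$. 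Consequently each between-cluster distance lies in $[\lambda t_\ell-1/4,\,\lambda t_\ell+1/2]$ for the appropriate $\ell$, an interval of length below $1$. Setting $t'_\ell:=\lambda t_\ell-1/4\ge 1$, all between-cluster distances fall in $\bigcup_\ell[t'_\ell,t'_\ell+1]$, and the set $P$ is separated since between-cluster distances exceed $\lambda/2\gg 1$.

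The number of between-cluster pairs is $\binom{n}{2}-\sum_i\binom{m_i}{2}=T(n,N)$, whence $M_k(d,n)\ge T(n,N_k(d))$; the bound $T(n,N)\ge\tfrac{n^2}{2}(1-1/N)+O(1)$ is elementary. The main subtlety, and the reason we naturally work with $N_k(d)$ rather than the weaker quantity $M_k(d-1)$, is that a $(d-1,\alpha)$-flat set need not be globally flat (as stressed in the remark preceding the proposition), so one cannot place all clusters along a single common direction $v$. Using a direction $v_i$ adapted to each point, the cross terms $v_i\cdot v_j$ are absorbed into the $O(m^2)$ contribution, while the $\lambda$-linear cross terms are tamed by the local flatness bounds; this localisation is the only technical point that genuinely requires care.
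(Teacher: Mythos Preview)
Your proof is correct and follows essentially the same approach as the paper: both take a $(d-1,\alpha)$-flat $\varepsilon$-nearly $k$-distance set of size $N_k(d)$, replace each point $p_i$ by an arithmetic progression in the direction $v_i$ normal to the local hyperplane $\Lambda_{p_i}$, and use the local flatness bound on $(p_i-p_j)\cdot v_i$ to keep all cross-cluster distances within a unit interval. The only cosmetic difference is that the paper achieves large distances by taking $t_1>2n^2$ directly, while you rescale by an explicit factor $\lambda$; your quantitative bookkeeping with $\alpha=1/(8Cm)$, $\lambda=8Cm^2$, $\varepsilon=1/(4\lambda)$ is more careful than the paper's one-line justification.
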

The statement of Proposition~\ref{propextend} also holds with $N_k(d)$ replaced by $M_k(d-1)$, since $M_k(d-1)\leq N_k(d)$. 
\begin{proof} Let $\alpha,\varepsilon>0$ be sufficiently small, and $t_1>10n^2$. Consider a $(d-1,\alpha)$-flat $\varepsilon$-nearly $k$-distance set $S'\subseteq \mathbb{R}^d$ with distances $t_1\leq \dots \leq t_k$ of cardinality $N_k(d)$. For simplicity assume that $N_k(d)|n$. 

For each $p\in S'$, let $\Gamma_p$ be a $(d-1)$-dimensional subspace such that $S'$ is $(p,d-1,\alpha)$-flat with respect to $\Gamma_p$. Further, let $v_p$ be a unit vector that is orthogonal to $\Gamma_p$.
Replace each point $p\in S'$ with an arithmetic progression $A_p=\big\{p+i v_p:i\in \{1,\ldots, \frac{n}{N_k(d)}\}\big\}$.

If $|\sin\alpha|<\frac{1}{10n}$, then 
the distances between any point from $A_p$ and any point  from $A_q$ for $p\ne q,$ are within $1/2$ from the distance between $p$ and $q$. The set $S = \bigcup_{p\in S'}A_p$ has cardinality  $n$. Define a graph $G$ on $S$ by putting an edge between two points if their distance is in $[t_1-1/2,t_1+1/2]\cup\ldots\cup[t_k-1/2,t_k+1/2]$. Then $G$ is an $N_k(d)$-partite graph with equal parts. By definition, the number of edges in such graph is $T(n, N_k(d))$. This argument can easily be modified to deal with the case when $N_k(d)\not | n$.
\end{proof}

We point out the following difference between the case of $k=1$ and $k\geq 2$ of the known constructions with $M_k(d,n)$ nearly equal distances. Let $S\subseteq \mathbb{R}^d$ be a set of $n$ points and $1\leq t_1\leq \dots\leq t_k$ be reals such that the number of pairs $\{p_1,p_2\}$ with $p_1,p_2\in S$ and with $\|p_1-p_2\|\in \bigcup_{i=1}^k[t_i,t_i+1]$ is $M_k(d,n)$. For $k=1$ the known constructions are all of the type that was described in Proposition~\ref{propextend} with $S'$ being globally $(d-1,\alpha_n)$-flat with $\alpha_n \to 0$, and thus the normal vectors $m_v$ being almost parallel. However, this is not the case for $k=2$. For $k=2$, $d=4$, as explained before, there are $(3,\alpha)$-flat $\varepsilon$-nearly $2$-distance sets of cardinality $N_2(4)$ in $\mathbb{R}^4$ for any $\alpha, \varepsilon>0$ that are not globally $(3,\pi/6)$-flat, and hence the corresponding normal vectors $m_v$ are not pairwise almost parallel.\\

\textbf{Example from \cite{EMP2}.} The authors of \cite{EMP2} suggested that a construction in the same spirit as the one in Proposition~\ref{propextend}  should give a close to optimal bound for $M_k(d,n)$. With the two propositions above in hand, their construction is easy to describe: take $k_1,\ldots, k_s$, $d_1\le \ldots\le d_s$, such that $\sum_{i=1}^sk_i = k$ and $\sum_{i=1}^s d_i=d-1$. Next, represent the hyperplane $x_d=0$ as $\mathbb R^{d_1}\times\ldots\times \mathbb R^{d_s}$. In each $\mathbb R^{d_i}$, take the following $k_i$-distance set: either the set that gives the lower bound \eqref{eqlb} or, if $d_i=1$, an arithmetic progression of length $k_i+1$. Then combine the sets in the same way as in the proof of Proposition~\ref{propcombine}, obtaining a nearly $k$-distance set in the hyperplane $x_d=0$. Then extend it in $\mathbb R^d$ as in Proposition~\ref{propextend}. Assume that either $\ell=0$ and $d_1> 1$, or $\ell \geq 1$ and $d_{\ell}=1<d_{\ell+1}$, and we have chosen arithmetic progressions in the first $\ell$ subspaces. The obtained set has $\frac {n^2}2(1-\frac 1 Q+o(1))$ distances that fall in the $k$ intervals, where 
\begin{equation}\label{eqt} Q := (k_1+1)\cdot \ldots \cdot (k_{\ell}+1)\cdot {\binom{d_{l+1}+1}{ k_{l+1}}}\cdot\ldots\cdot {\binom{d_s+1}{k_s}}.\end{equation}
One then needs to optimise the value of $Q$ over all choices of $d_i,k_i$, $\ell$ and $s$. 
It is possible that $Q$ gives the value of $N_k(d)$ and $M_k(d+1)$ in many cases. Evidently, in order to maximise $Q$, one should take $k_1,\ldots, k_{\ell}$ to be nearly equal. 

We add the following observation.\vskip+0.1cm
{\bf Observation.} \emph{For any fixed $k,d$, there is a choice of $s,\ell,k_i,d_i$ that maximises $Q$ and such that $\ell = s-1$, that is, there is only one term of the form ${\binom{d_i+1}{k_i}}$.}\vskip+0.1cm

 Let us prove this. First, we observe that for any $i\ge l+1$, we may suppose that $k_i\le d_i/2$. Otherwise, reducing the number of distances does not decrease $Q$. We need the following claim.
\begin{cla}\label{cla1}
For any integers $a_1,a_2,z_1,z_2$ such that $z_1,z_2\ge 3$ and $a_1\le z_1/2,\ a_2\le z_2/2$, except $z_1=z_2=4$ and $a_1=a_2=2$, we have
\begin{equation}\label{eqbinprod}
{\binom{z_1}{a_1}}{\binom{z_2}{a_2}}\le {\binom{z_1+z_2-1}{ a_1+a_2}}.\end{equation}
\end{cla}
The proof is a simple calculation and is deferred to the appendix.
Using this claim, we can replace any pair of binomial coefficients with $d_i,d_j\ge 2$ ($i,j>l$) in \eqref{eqt} with one binomial coefficient without decreasing $Q$, unless both binomial coefficients are $\binom{4}{2}$. Moreover, if $d_i=1$ for $i>l$, then $k_i=1$ and we may simply replace $\binom{1+1}{1}$ by $(1+1)$, making it a term of the first type. Finally, if we have two terms of the form $\binom{4}{2}$, then we may replace them with $(2+1)\cdot\binom{6}{2}$, which is larger, and also uses $6$ dimensions and $4$ distances.\\

{\bf Examples with fixed $k$ or $d$.} It is not true that $M_k(d)=m_k(d)$ holds for every $k$ and $d$. There are several examples of $k$ and $d$ for which we need more than one multiplicative term to maximise \eqref{eqm'}, and hence $M_k(d)\geq M'_k(d)>m_k(d)$. Some of these examples we list below. When needed, we rely on the information from Table~\ref{small}.

\begin{itemize}
\item In $\mathbb{R}^2$ the largest cardinality of a $6$-distance set is $13$, while the product of two arithmetic progressions of length $4$ ($d_1=d_2=1$, $k_1=k_2=3$ in \eqref{eqt}) gives an $\varepsilon$-nearly $6$-distance sets of cardinality $16$. Thus $M_6(2)\geq M'_6(2)\geq 16>m_2(6)$.

\item In $\mathbb{R}^3$, the largest $4$-distance set has $13$ points, while we can construct $\varepsilon$-nearly $4$-distance sets of cardinality $15=3\cdot 5$ as a product of arithmetic progression of length $3$ and a $2$-distance set on the plane of cardinality $5$. Thus $M_4(3)\geq M'_4(3)\geq 15>m_4(3)$.
\item In $\mathbb{R}^2$ the cardinality of a $k$-distance set is $O (k\log k)$ by \cite{GK}, while the product of two  arithmetic progressions of length $(\lfloor k/2 \rfloor+1)$ and of length $(\lceil k/2 \rceil+1 )$ gives 
an $\varepsilon$-nearly $k$-distance set of cardinality $(\lfloor k/2 \rfloor+1)(\lceil k/2 \rceil+1 )\geq k^2/4$.

\item In $\mathbb{R}^d$ for $d\geq 3$ the cardinality of a $k$-distance set is 
$O\left (k^{(d/2)(d+2)/(d+1)}\right )$ by the result of Solymosi and Vu \cite{SV}. On the other hand, the product of $d$ arithmetic progressions of size $\lfloor k/d \rfloor+1$
gives an $\varepsilon$-nearly $k$-distance set of cardinality $(\lfloor k/d \rfloor +1)^d\geq (k/d)^d$. 
\end{itemize}

The largest $5$-distance set in $\mathbb{R}^2$ is of cardinality $12$. We may construct $\varepsilon$-nearly $5$-distance sets using product-type constructions as described in the list above, also of cardinality $12$. In addition, we can construct an $\varepsilon$-nearly $5$-distance set of size $12$ that is not of this product construction, and neither does it have the structure of a $5$-distance set. Take a large equilateral triangle, and in each of its vertices put a rhombus of a much smaller size with angles $\pi/3$ and $2\pi/3$ such that the angle of the corresponding sides of the rhombus and the triangle is $\pi/2$ as shown on Figure~\ref{fig1}. This example makes us suspect that there could be some exceptions to Conjecture~\ref{conj1}. Though we also believe there are only finitely many examples with $M_k(d)$ points that are not products of $k_i$-distance sets.
\begin{figure}[h]
\centering
{\includegraphics[width=12cm]{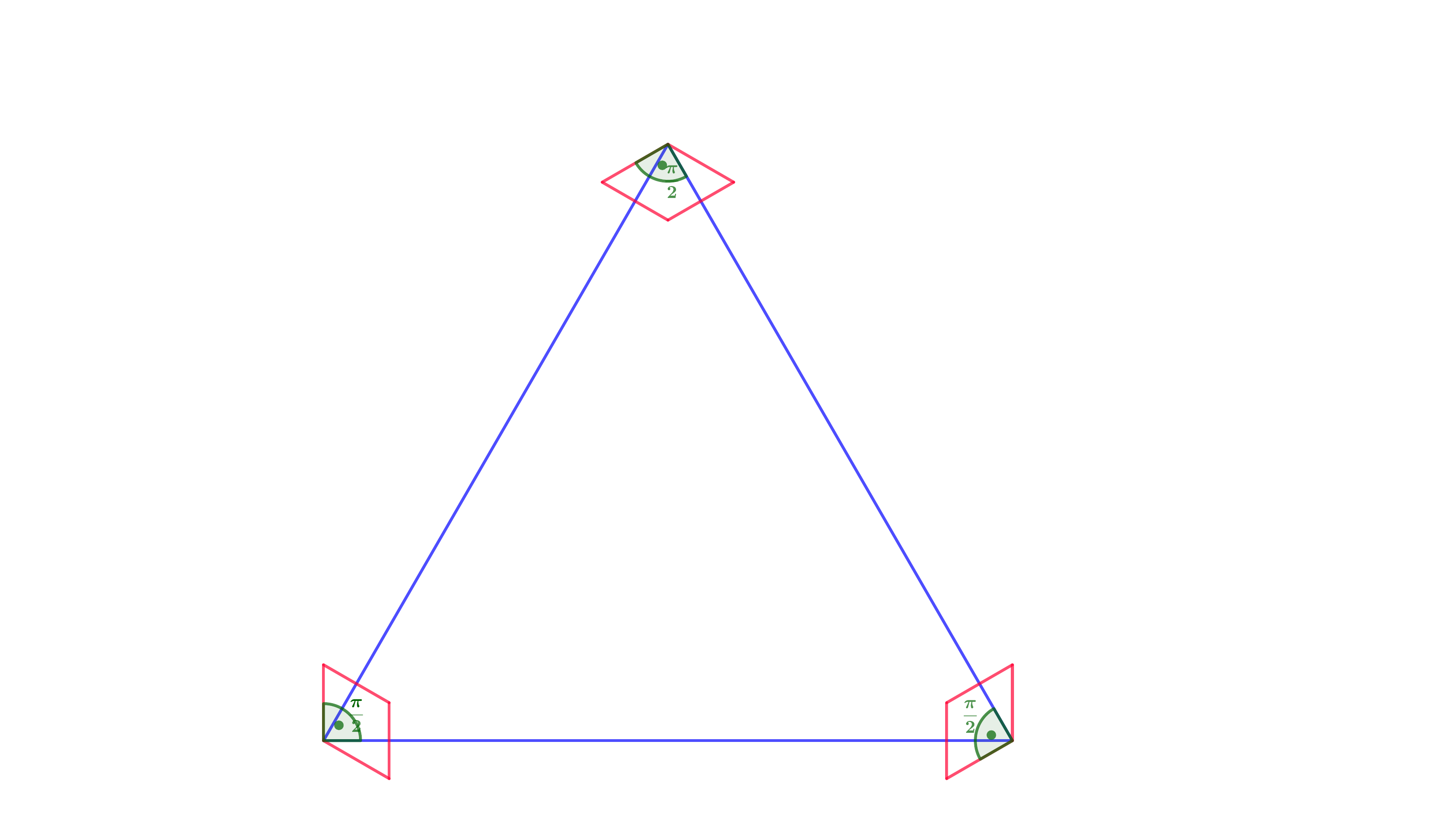}}
\caption{$\varepsilon$-nearly 5-distance set on the plane that is not product-type}\label{fig1}
\end{figure}

\subsection{Main results}\label{sec15}
Let us stress that all the sets that we consider in the paper are separated, which we assume tacitly for the rest of the paper. The first theorem deals with small values of $k$ and is one of the main results of the paper. 

\begin{thm}\label{nearly k} We have $N_k(d+1)=M_k(d)=m_k(d)$  for $d\geq 0$ if one of the following  holds:
\begin{itemize}
    \item[(i)] $d\ge d(k)$, where $d(k)$ is some constant depending on $k$;
    \item[(ii)] $k\le 3$.
\end{itemize} 
\end{thm}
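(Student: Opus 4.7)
\medskip
\noindent\textbf{Proof plan.} Since the chain $m_k(d) \le M_k(d) \le N_k(d+1)$ was already observed in Section~\ref{sec131}, the task reduces to proving the reverse inequality $N_k(d+1) \le m_k(d)$ in both regimes. Concretely, I would fix $\alpha, \varepsilon > 0$ arbitrarily small, take a $(d, \alpha)$-flat $\varepsilon$-nearly $k$-distance set $S \subset \mathbb{R}^{d+1}$, and show $|S| \le m_k(d)$. The argument proceeds by induction on $k$ and is structured around the dichotomy ``all distances comparable'' versus ``distances at several scales.''

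\medskip
\noindent\textbf{Bounded-ratio case via compactness.} The first ingredient is the compactness statement Lemma~\ref{limit} (hinted at in Section~\ref{sec13}): if the ratios $t_{i+1}/t_i$ are bounded by a constant $K=K(k,d)$, then the flatness assumption together with $\|p_1-p_2\|/\|q_1-q_2\| \le K'$ guarantees global $(d, \gamma\alpha)$-flatness. Rescaling so that the diameter is $1$ and letting $\varepsilon, \alpha \to 0$ along a subsequence, the set $S$ Hausdorff-converges to a genuine $k$-distance set inside a $d$-dimensional plane, yielding $|S| \le m_k(d)$.

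\medskip
\noindent\textbf{Multi-scale case via hierarchical clustering.} If some ratio $t_{j+1}/t_j$ is enormous (say, larger than $K$), I partition $[k]$ at the gap into $j$ ``small'' distances and $k-j$ ``large'' ones, which induces a clustering of $S$: two points are in the same cluster iff their distance is small. Each cluster $C$ is a $(d,\alpha)$-flat $\varepsilon$-nearly $j$-distance set, and choosing a representative from each cluster gives a $(d,\alpha)$-flat $\varepsilon$-nearly $(k-j)$-distance set $T$ of cluster centers. Flatness forces the small-scale directions (spanning each cluster) to be almost contained in a subspace of dimension $d_1$, while the large-scale directions among $T$ live in a subspace of dimension $d_2$ with $d_1+d_2 \le d$, up to error controlled by $\alpha$. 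The inductive hypothesis applied to each cluster and to $T$ yields $|S| \le N_j(d_1+1) \cdot N_{k-j}(d_2+1) \le m_j(d_1) \cdot m_{k-j}(d_2) \le M'_k(d)$, with $M'_k(d)$ as defined in \eqref{eqm'}.

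\medskip
\noindent\textbf{Reducing $M'_k(d)$ to $m_k(d)$.} So far, we have $N_k(d+1) \le \max(m_k(d), M'_k(d))$, and it remains to verify $M'_k(d) \le m_k(d)$ in the two regimes:

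\emph{For (ii), $k \le 3$:} a small finite check. When $k=1$, $M'_1(d) = d+1 = m_1(d)$ trivially. When $k=2$, the only nontrivial split is $k_1 = k_2 = 1$ with $d_1+d_2 = d$, giving $(d_1+1)(d_2+1) \le 2d \le m_2(d)$ (using $m_2(d) \ge \binom{d+1}{2} \ge 2d$ for $d \ge 3$, and a direct check for $d \le 2$). The case $k=3$ similarly splits into a handful of cases, each handled by \eqref{eqlb} and Table~\ref{small}.

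\emph{For (i), $d$ large:} combining the upper bound \eqref{eqbbs} $m_{k_i}(d_i) \le \binom{d_i+k_i}{k_i}$ with the lower bound \eqref{eqlb} $m_k(d) \ge \binom{d+1}{k}$, it suffices to show $\prod_i \binom{d_i+k_i}{k_i} \le \binom{d+1}{k}$ whenever $\sum d_i = d, \sum k_i = k$ and $d \ge d(k)$, which follows from a Vandermonde-type inequality in the regime $d \gg k$ (an asymptotic strengthening of Claim~\ref{cla1}).

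\medskip
\noindent\textbf{Main obstacle.} The technically delicate step is the hierarchical clustering, specifically the bookkeeping of flatness across scales: one must show that when a cluster is a $(d, \alpha)$-flat set spanning directions in a $d_1$-dimensional subspace, the residual space available to the set of cluster centers has dimension at most $d - d_1$, with the $\alpha$-approximation errors not accumulating catastrophically when one then performs the compactness step within each cluster. Getting the quantifiers $\alpha \ll \varepsilon \ll 1/K$ in the right order, so that the inductive hypothesis is applicable with admissible parameters, is where the main care is required.
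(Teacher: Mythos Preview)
Your overall architecture (compactness when ratios are bounded, cluster at the largest gap otherwise, induct on $k$) matches the paper, but the heart of the multi-scale step is misidentified, and this is a genuine gap.

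You write that ``flatness forces the small-scale directions (spanning each cluster) to be almost contained in a subspace of dimension $d_1$, while the large-scale directions among $T$ live in a subspace of dimension $d_2$ with $d_1+d_2\le d$.'' Flatness of $S$ alone does nothing of the sort: a cluster is just a subset of a $(d,\alpha)$-flat set and could a priori span the full $d$-plane. The mechanism in the paper is different and metric. If $B$ is the large-scale clique and $R$ a small cluster with $R\cap B=\{b\}$, then \emph{provided every $b'\in B\setminus\{b\}$ sees all of $R$ in a single interval} $[t,t+\varepsilon']$, the cluster $R$ lies on an $\varepsilon'$-thin sphere around each $b'$, hence each direction inside $R$ is nearly orthogonal to every $b'-b$. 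This is Lemma~\ref{smallcliques}: if $B$ is $(b,j,\beta^j)$-flat but not $(b,j-1,\beta^{j-1})$-flat, then $R$ is $(d'-j,\alpha)$-flat, and combining with the ambient $(d,\alpha)$-flatness via Lemma~\ref{almosto} yields that $R$ is $(d-j,10\alpha)$-flat. So the dimension is dictated by the large-scale clique, and the orthogonality comes from near-equidistance, not from flatness.

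The italicised proviso is where real work hides and your proposal is silent. After splitting at a gap $t_{i}/t_{i-1}\ge K$, distances from a fixed $b'\in B$ to points of $R$ lie among $t_i,\dots,t_k$, and nothing prevents them from landing in two different intervals when some $t_{l+1}-t_l$ is comparable to $\mathrm{diam}(R)$. The paper deals with this by a secondary case split: if $\min_{l>i} t_l/t_{l-1}\ge c>1$ one checks via the triangle inequality that a single interval suffices; if instead some ratio is close to $1$, Lemma~\ref{limit}(ii) drops one distance and bounds $|B|$ by $m_{k-i}(d)$ instead of $m_{k-i+1}(d)$. For $k=3$ there is a further subcase (your ``small finite check'' is too optimistic): when $t_3/t_2$ is close to $1$ and $t_1$ is tiny compared to $t_3-t_2$, condition~2 genuinely fails, and the paper needs Claim~\ref{cla23} plus a bespoke counting lemma exploiting that between any two clusters at least one of them still lies on a single sphere. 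None of this is mere ``bookkeeping of quantifiers''.

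A smaller gap in part~(i): your induction gives $|R|\le N_{j}(d_1+1)$ and then invokes $N_j(d_1+1)=m_j(d_1)$, but the inductive hypothesis only supplies this for $d_1\ge d(j)$. When the large clique $B$ nearly fills the $d$-plane (so $d_1=d-j$ is small), you cannot appeal to induction; the paper instead bounds the cluster size by the crude constant of Lemma~\ref{simple bound} and absorbs it using $m_k(d)\ge\binom{d+1}{k}$ for $d$ large.
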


For fixed $d$ and large $k$ we prove the following simple estimate. 
\begin{thm}\label{fixd} We have $M_k(d)=\Theta\left (k^{d}\right )$ and $N_k(d)=\Theta\left (k^{d-1}\right )$ for any fixed $d\geq 2$.
\end{thm}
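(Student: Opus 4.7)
The lower bounds are immediate from the product construction of Section~\ref{sec14}: taking $d$ arithmetic progressions each of length $\lfloor k/d\rfloor+1$, placed along $d$ mutually orthogonal directions and at vastly differing scales (so that the diameter of each factor is negligible compared to the step of the next), produces an $\varepsilon$-nearly $k$-distance set in $\mathbb{R}^d$ of cardinality $(\lfloor k/d\rfloor+1)^d \geq (k/d)^d = \Omega_d(k^d)$. Using the inequality $N_k(d) \geq M_k(d-1)$ from Section~\ref{sec131}, the analogous construction in $\mathbb{R}^{d-1}$ gives $N_k(d) \geq \Omega_d(k^{d-1})$.

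For the upper bound $M_k(d) = O_d(k^d)$, I would proceed by induction on $k$ via a two-case clustering argument. Let $S$ be an $\varepsilon$-nearly $k$-distance set with distances $t_1 \leq \ldots \leq t_k$, and fix a large constant $K = K(k,d)$. If all ratios $t_{i+1}/t_i$ are bounded by $K$, a compactness argument (cf.\ Lemma~\ref{limit}) shows that $S$ is arbitrarily close to a genuine $k$-distance set as $\varepsilon\to 0$, so the Bannai--Bannai--Stanton inequality~\eqref{eqbbs} gives $|S| \leq m_k(d) \leq \binom{d+k}{d} = O_d(k^d)$. Otherwise, pick an index $j$ with $t_{j+1}/t_j > K$ and cluster $S$ by connecting points whose distance lies in $[t_1,t_j+\varepsilon]$: each cluster has diameter $\leq t_j+\varepsilon$ and is itself an $\varepsilon$-nearly $j$-distance set, while the cluster centroids, rescaled by $1/t_{j+1}$, form an $O(1/K)$-nearly $(k-j)$-distance set (with $O(1/K)\to 0$ as $K\to\infty$).

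The main obstacle is closing this induction: a naive bound $M_j(d)\cdot M_{k-j}(d) = O_d(k^{2d})$ is far too weak. The crucial step is to argue that the effective affine dimensions $d_1$ of the clusters and $d_2$ of the centroid configuration satisfy $d_1 + d_2 \leq d$ (up to a bounded loss absorbed into the constants), reflecting the product structure of the extremal examples from Proposition~\ref{propcombine} and the Observation in Section~\ref{sec14}: the cluster lives in one affine subspace, while the intercluster configuration is forced to lie essentially in a complementary subspace. Once this splitting is available, the induction closes through $M_j(d_1)\cdot M_{k-j}(d_2) \leq C^2 j^{d_1}(k-j)^{d_2} \leq C^2 k^{d_1+d_2} \leq C^2 k^d$, giving $M_k(d)=O_d(k^d)$. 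The bound $N_k(d) = O_d(k^{d-1})$ follows by the same scheme combined with the $(d-1,\alpha)$-flatness hypothesis: in the comparable-distance regime the remark of Section~\ref{sec131} ensures that $S$ is globally $(d-1,\gamma\alpha)$-flat, hence essentially contained in a $(d-1)$-dimensional affine subspace, yielding $|S| \leq M_k(d-1)+o(1) = O_d(k^{d-1})$; in the large-gap regime one propagates the flatness through the clustering step to save one dimension throughout the recursion.
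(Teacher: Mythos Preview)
Your lower bounds are fine and match the paper's. The gap is in the upper bound for $M_k(d)$ (and by extension $N_k(d)$): the ``crucial step'' you identify, namely that the effective dimensions of the clusters and of the centroid configuration satisfy $d_1+d_2\le d$, is not proved, and there is no easy way to prove it. In the clustering you describe, the distances from a fixed blue point $b\in B$ to the points of a red cluster $R$ need not all lie in a single interval $[t_i,t_i+\varepsilon]$ (the gaps among $t_{j+1},\dots,t_k$ may be smaller than the diameter $\le t_j+\varepsilon$ of $R$), so Lemma~\ref{smallcliques} is not available. Without that, there is no mechanism forcing $R$ to be flat in a subspace complementary to $\Lambda_B$; the assertion that the extremal examples ``reflect a product structure'' is essentially the content of Conjecture~\ref{conj1}, which is open. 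Even granting the splitting, your recursion replaces $C_d$ by $C_{d_1}C_{d_2}$ at each step, so the induction on $k$ does not close with a single constant depending only on $d$.

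The paper's argument is completely different and avoids this obstacle: it inducts on $d$, not on $k$. One fixes a point $v$ in the set $T$ and partitions $T\setminus\{v\}$ into the $k$ spherical shells $T_i=\{p: \|p-v\|\in[t_i,t_i+\varepsilon]\}$. Each $T_i$ lies (after a tiny perturbation) on a $(d-1)$-sphere and is still an $\varepsilon$-nearly $k$-distance set; moreover, for the flat version, the hyperplane of that sphere is almost orthogonal to the plane $\Lambda_p$ witnessing $(d,\alpha)$-flatness, so Lemma~\ref{almosto} drops the flatness dimension by one. Thus $|T_i|\le NS_k(d)=O_d(k^{d-2})$ by induction on the dimension, and summing over the $k$ shells gives $|T|\le 1+k\cdot O_d(k^{d-2})=O_d(k^{d-1})$. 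This yields $NS_k(d+1)=O_d(k^{d-1})$ and then $N_k(d+1)=O_d(k^{d})$ by one more pass of the same shell argument; the bound $M_k(d)\le N_k(d+1)$ finishes. The point is that slicing by spheres around a single point always costs exactly one dimension and exactly a factor of $k$, with no need to control how the large-scale and small-scale geometries interact.
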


We conjecture that $M_k(d)= (k/d)^d+o(k^d)$.

Another main result of the paper is the following theorem, which gives the promised relation between $N_k(d)$ and $M_k(d,n)$.

\begin{thm}\label{manynearly} For any $d\ge 2$, $k\ge 1$, $\gamma>0$  there exists $n_0$, such that for any $n\ge n_0$ we have
\begin{equation}\label{eq01} T(n,N_k(d))\leq M_k(d,n)\leq T(n,N_k(d))+\gamma n^2 \end{equation}
Moreover, \eqref{eq01} remains valid if in the definition of $M_k(d,n)$ we change the intervals of the form $[t_i,t_i+1]$ to intervals of the form $[t_i,t_i+cn^{1/d}]$ for some constant $c=c(k,d,\gamma)>0$.
\end{thm}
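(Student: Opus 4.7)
The lower bound is exactly Proposition~\ref{propextend} (the extension to intervals of width $cn^{1/d}$ follows by choosing the arithmetic progressions in that construction with common difference $\Theta(n^{1/d})$ and taking $t_1$ correspondingly large), so the plan is to prove the upper bound. Set $r:=N_k(d)$. Given a separated set $S$ of $n$ points and reals $t_1\le\ldots\le t_k$, form the \emph{proximity graph} $G$ on $S$ whose edges are the pairs $\{p,q\}$ with $\|p-q\|\in\bigcup_{i=1}^{k}[t_i,t_i+\delta]$, where $\delta\in\{1,cn^{1/d}\}$. I would argue by contradiction: assume $|E(G)|>(1-1/r+\gamma)\binom{n}{2}$ and then exhibit a $(d-1,\alpha)$-flat $\varepsilon$-nearly $k$-distance set of size $r+1$ for arbitrarily small $\alpha,\varepsilon$, contradicting the definition of $N_k(d)=r$.

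For the extraction step, I would apply the Erd\H os--Stone--Simonovits theorem to $G$: its edge density forces a copy of the balanced complete $(r+1)$-partite graph $K_{r+1}(s)$ with $s=s(n,\gamma)\to\infty$, on vertex classes $V_1,\dots,V_{r+1}$. For each of the $\binom{r+1}{2}$ cross-pairs there are $k$ candidate intervals; iterative pigeonholing lets me pass to subsets $V_i'\subseteq V_i$ of size $s':=s/k^{O(r^2)}$ with the property that every edge between $V_i'$ and $V_j'$ lies in a single interval $[t_{\sigma(i,j)},t_{\sigma(i,j)}+\delta]$, for a fixed map $\sigma$. Rescaling by $1/\max_it_i$, all cross-distances become $\Theta(1)$ and the interval width shrinks to $\delta/\max_it_i$. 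Since $S$ is separated and $|S|=n$, the diameter (hence $\max_it_i$, after checking the short-distance case separately by removing points inside very small balls) is at least a constant multiple of $n^{1/d}$, so $\delta/\max_it_i\to 0$ as $n\to\infty$; this controls the effective $\varepsilon$.

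It remains to produce the flatness. The key geometric observation is that if one picks $v_i\in V_i'$ together with a second point $v_i'\in V_i'$, then $v_i'$ lies in the intersection of the $r$ thin spherical shells of radii $t_{\sigma(i,j)}$ (width $\delta$) centred at $v_j$, for $j\ne i$. After rescaling, these shells are near-tangent to their tangent hyperplanes, so the set $V_i'-v_i$ lies, up to the small parameter $\delta/\max_it_i$, in the intersection of $r$ hyperplanes orthogonal to the directions $v_j-v_i$. Thus the configuration $\{v_1,\dots,v_{r+1}\}$ is automatically $(v_i,d-1,\alpha)$-flat at every $v_i$ (taking $\Lambda_{v_i}$ to be any hyperplane containing the span of $\{v_j-v_i:j\ne i\}$), provided we first use a Ramsey/averaging step on the $V_i'$'s to make the relevant directions stable. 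A final pigeonhole on which $t_\ell$'s actually occur among the $\sigma(i,j)$'s reduces the number of distances to at most $k$, yielding a $(d-1,\alpha)$-flat $\varepsilon$-nearly $k$-distance set of size $r+1$ and closing the contradiction.

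The hard part is the third paragraph: concurrently controlling flatness at \emph{every} vertex $v_i$ while preserving the near-equality of distances and the size of the surviving sub-parts. The delicate point is that the thin-shell-intersection argument gives flatness from one vantage point $v_i$ by using the inflated set $V_i'$, but one must iterate this over $i$ without letting $s'$ drop below $r+2$ or the effective $\alpha$ grow. I expect this to need an inductive (or iterated Ramsey) argument, possibly treating separately the degenerate case where some $t_i$ is so much larger than others that the scales decouple and the problem splits across subsets of the intervals; this decoupling is essentially where the product structure in the definition \eqref{eqm'} of $M_k'(d)$, and hence the value of $N_k(d)$, enters quantitatively.
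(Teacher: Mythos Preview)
Your overall architecture matches the paper's: find a large complete $(r+1)$-partite subgraph, pigeonhole to a single interval between parts, use a thin-shell argument to obtain flatness, and rescale to produce a $(d-1,\alpha)$-flat $\varepsilon$-nearly $k$-distance set of size $r+1$. Two points, however, are not right as written, and they are precisely the places where the paper does something you do not.

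\textbf{The rescaling and the within-part distances.} You rescale by $1/\max_i t_i$ and claim all cross-distances become $\Theta(1)$; this fails whenever the $t_i$ occur on different scales, and then the resulting set is not separated. More seriously, for the thin-shell argument to give an angle close to $\pi/2$ between $v_i'-v_i$ and $v_j-v_i$, you need $\|v_i'-v_i\|$ to be large compared to the interval width $\delta$. When $\delta=cn^{1/d}$, the pair $v_i,v_i'$ coming out of Erd\H os--Stone can be as close as $1$, and the argument collapses. The paper fixes both issues in one stroke: before extracting the multipartite structure it passes to a uniformly random constant-size subset $S_H$ of the vertex set, which with positive probability has \emph{all} pairwise distances $>c_1 n^{1/d}$ (a volume argument) and still carries a positive density of $(r+1)$-cliques (Lov\'asz--Simonovits supersaturation). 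Hypergraph K\H ov\'ari--S\'os--Tur\'an on $S_H$ then yields the complete $(r+1)$-partite graph with all pairwise distances, including the within-part ones $\|v_i-v_i'\|$, at least $c_1 n^{1/d}$. Rescaling by $c_2 n^{1/d}$ (not by $\max_i t_i$) then gives both separatedness and interval width $c/c_2\le\varepsilon$, with no need to split into scale-decoupled cases.

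\textbf{The flatness witness.} You take $\Lambda_{v_i}$ to be ``any hyperplane containing the span of $\{v_j-v_i:j\ne i\}$''; that span can be all of $\R^d$, so this choice need not exist. The correct $\Lambda_{v_i}$ is the hyperplane orthogonal to $v_i'-v_i$: the thin-shell computation shows each $v_j-v_i$ makes angle in $[\pi/2-\alpha,\pi/2+\alpha]$ with $v_i'-v_i$, hence angle $\le\alpha$ with $\Lambda_{v_i}$. Once this is said, flatness at \emph{every} $v_i$ is immediate from the single witness $v_i'$ in its own part; there is no further iterated Ramsey step, and the difficulties you anticipate in your last paragraph do not arise.
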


Theorem \ref{manynearly} combined with Theorem~\ref{nearly k} 
gives the value of $M_2(d,n)$, $M_3(d,n)$ and $M_k(d,n)$ for $d\ge d_0(k)$ asymptotically in terms of $m_2(d)$, $m_3(d)$ and $m_k(d)$.
In the mentioned cases, we can strengthen the result and 
determine the exact value of $M_k(d,n)$  for large $n$. 
In particular, this extends results of \cite{EMP2} (cf. Theorem~\ref{emp3}) to the cases of $d=4,5$.

\begin{thm}\label{thmsmallkexact} For $n\geq n_0(d,k)$ we have $$M_k(d,n) = T(n,m_k(d-1))$$ if either $k\le 3$ or $d\ge d(k)$. Moreover, the same holds with intervals of the form $[t_i,t_i+cn^{1/d}]$ for some $c=c(k,d)>0$.
\end{thm}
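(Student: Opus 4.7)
The lower bound $M_k(d,n)\ge T(n,m_k(d-1))$ is immediate: under the hypotheses on $k$ and $d$, Theorem~\ref{nearly k} gives $N_k(d)=m_k(d-1)$, and Proposition~\ref{propextend} then yields $M_k(d,n)\ge T(n,N_k(d))=T(n,m_k(d-1))$. The construction used in Proposition~\ref{propextend} in fact tolerates intervals of length $cn^{1/d}$: the arithmetic progressions $A_v$ can be scaled so that each has diameter at most $cn^{1/d}/2$, and the distances between distinct $A_v,A_w$ fluctuate by the same order of magnitude.

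For the upper bound, set $r:=m_k(d-1)=N_k(d)$ and suppose, for contradiction, that there exist a separated set $P\subset\R^d$ of $n$ points and reals $t_1\le\dots\le t_k$ such that the graph $G$ on $P$, whose edges are the pairs at distance in $\bigcup_{i=1}^k[t_i,t_i+cn^{1/d}]$, satisfies $|E(G)|>T(n,r)$. By Theorem~\ref{manynearly}, for any fixed $\gamma>0$ and $n$ sufficiently large, $|E(G)|\le T(n,r)+\gamma n^2$, so $G$ is extremely close to a Tur\'an graph. I would extract from the proof of Theorem~\ref{manynearly} (or via a geometric variant of an Erd\H{o}s--Simonovits stability argument) a partition $P=C_1\sqcup\dots\sqcup C_r$ into clusters of nearly equal size $\lfloor n/r\rfloor$ or $\lceil n/r\rceil$ such that almost all edges of $G$ run between distinct $C_i$, each $C_i$ has diameter much smaller than $t_1$, and the set of cluster representatives forms a $(d-1,\alpha)$-flat $\ep$-nearly $k$-distance set, with $\alpha,\ep$ as small as desired by choosing $n$ large.

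The strict inequality $|E(G)|>T(n,r)$ must then produce at least one ``excess'' configuration beyond the complete $r$-partite structure on $C_1,\ldots,C_r$. The final step is to promote this excess into an additional $(d-1,\alpha)$-flat point: depending on how the excess manifests, either one of the clusters $C_i$ can be split into two subclusters whose representatives, together with those of the remaining $r-1$ clusters, form a $(d-1,\alpha)$-flat $\ep$-nearly $k$-distance set of size $r+1$; or a pair of points inside some $C_i$ spanning an edge of $G$ can be combined with representatives of the other $r-1$ clusters to produce the same type of forbidden configuration. Either way, we obtain a $(d-1,\alpha)$-flat $\ep$-nearly $k$-distance set in $\R^d$ of size $r+1>N_k(d)$, contradicting Theorem~\ref{nearly k}.

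The hard part is this promotion step: bridging the asymptotic slack $\gamma n^2$ from Theorem~\ref{manynearly} down to an exact bound requires quantitatively tight stability, so that even a single extra edge above $T(n,r)$ already forces a genuine new flat point rather than a mere rearrangement of existing clusters. This demands simultaneous control of the cluster diameters, the flatness angle $\alpha$, and the approximation $\ep$, all expressed as appropriate functions of $n$, together with a case analysis ensuring that every type of ``excess'' edge --- whether it lies inside a cluster, corresponds to an atypical large cluster, or reflects a non-extremal partition --- can indeed be converted into an additional $(d-1,\alpha)$-flat point near the right $k$-distance profile.
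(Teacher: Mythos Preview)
Your lower bound is fine and matches the paper. The difficulty is entirely in the upper bound, and here your plan has a genuine gap.

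You want to show that a single extra edge over $T(n,r)$ forces a $(d-1,\alpha)$-flat $\ep$-nearly $k$-distance set of size $r+1=N_k(d)+1$. But look at your own ``promotion step'': the excess is either an edge inside a cluster $C_i$, or a cluster that should be split. In either case the one or two new points you produce have no reason to be $(d-1,\alpha)$-flat. Flatness of a representative $p$ of $C_j$ comes from the fact that $C_j$ contains \emph{many} points, so that the within-cluster direction serves as the normal to $\Lambda_p$. A single stray edge gives you no such normal direction for its endpoints. So the set of size $r+1$ you end up with is flat at $r-1$ of its points (the old representatives of the untouched clusters) but possibly not at the one or two new ones. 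Contradicting $N_k(d)=r$ is therefore out of reach.

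This is exactly why the paper introduces the notion of \emph{almost} $(d-1,\alpha)$-flat sets (flat at all but at most two points) and the quantity $A_k(d)$. What is actually proved in the course of Theorem~\ref{nearly k} is the stronger statement $A_k(d)=m_k(d-1)$ in the relevant ranges of $k,d$; this is then fed into a separate Theorem~\ref{thmak} asserting $M_k(d,n)\le T(n,A_k(d))$. The mechanism for Theorem~\ref{thmak} is also different from your stability sketch: rather than Erd\H os--Simonovits stability, one invokes Erd\H os's result that a graph with $T(n,\ell-1)+1$ edges contains an edge lying in $\delta n^{\ell-2}$ copies of $K_\ell$. That distinguished edge $\{v_1,v_2\}$ furnishes the two ``non-flat'' vertices; the remaining $\ell-2$ vertices are found inside a $K_{1,1,m,\ldots,m}$ (via the hypergraph K\H ov\'ari--S\'os--Tur\'an theorem), and each of those parts of size $m\ge 2$ supplies the normal direction needed for flatness. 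The resulting set of size $\ell=A_k(d)+1$ is then almost $(d-1,\alpha)$-flat, contradicting the bound on $A_k(d)$.

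In short: your outline would prove only the asymptotic Theorem~\ref{manynearly}, not the exact Theorem~\ref{thmsmallkexact}. To close the gap you must either (a) weaken ``flat'' to ``almost flat'' and correspondingly strengthen Theorem~\ref{nearly k} to control $A_k(d)$, as the paper does, or (b) find some other way to certify flatness at the excess points, which does not seem to be available.
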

Again, in view of Proposition~\ref{propextend}, we only need to show that $M_k(d,n) \leq T(n,m_k(d-1))$.
This is a consequence of the more general Theorem~\ref{thmak}, presented in Section~\ref{sec6}.

Observe that $M_k(d,n)\leq T(n,M_k(d))$ is obvious from Tur\'an's theorem and the definition of $M_k(d)$. Hence the difficulty in proving Theorem~\ref{manynearly} lies in bounding $M_k(d,n)$ by the maximal cardinality of $(d-1,\alpha)$-flat nearly $k$-distance sets. Similarly, the difficulty in proving Theorem~\ref{thmsmallkexact} is bounding $M_k(d,n)$ by the maximal cardinality of $k$-distance sets in the space of one dimension smaller. 

We note that many different classes of dense geometric graphs were studied from a similar perspective. We mention diameter graphs \cite{Swan,Kup10,KP} and double-normal graphs \cite{MS,PS,Kup13}. In some cases, the relationship between the largest clique and the maximum number of parts in an arbitrarily large complete multipartite graph is quite intricate, as it is  the case for double-normal graphs, see \cite{Kup13}.

Note that an extended abstract \cite{FK} of this work with the same title was accepted for the Eurocomb 2019 conference.

\section{Proofs}\label{proofs}
The structure of this section is as follows. We start by giving sketches of the proofs of the theorems. The first subsection gives auxiliary results that are going to be used in the proof of Theorem~\ref{nearly k}, which is doubtless the hardest result in the paper. Most of this section can be skipped in the first reading. We summarise its content below. Subsections~\ref{sec22} and~\ref{sec23} are devoted to the proof of parts (i) and (ii) of Theorem~\ref{nearly k}, respectively. In subsection~\ref{sec24} we give a simple and short proof of Theorem~\ref{fixd}. This is largely independent from the previous material.
In Subsection~\ref{sec6} we give the proofs of Theorems~\ref{thmsmallkexact} and~\ref{manynearly}. These proofs are almost completely independent of the previous subsections, they basically only require Theorem~\ref{nearly k} as an input.\\

{\bf Subsection~\ref{sec21}: } The subsection starts with an important Lemma~\ref{clique} that allows us to find a very precise structure in the graph of distances between the points in case we can split distances into `small' and `large': it splits into clusters of points at small distances, and the distances between any two points from different clusters are large. 

Lemma~\ref{limit} is a compactness statement, which states that, in case the ratio of the largest and the smallest distance is uniformly bounded, then a nearly $k$-distance set converges to a usual $k$-distance set, and, in particular, is at most as big. This gives the intuition that non-trivial cases in the proof of Theorem~\ref{nearly k} deal with the case when $t_{i+1}/t_i>K$ for some $i$ and arbitrarily large $K$. The second part additionally tells us that, in case the ratio of two consecutive distances tends to $1$, then these distances are `glued', and in the limit we will have a $(k-1)$-distance set. 

Lemma~\ref{simple bound} gives a simple upper bound on $M_k(d)$ only based on Lemma~\ref{limit}.

Lemmas~\ref{flatslemma}-\ref{smallcliques} develop the machinery to deal with almost-flat sets. First, we translate the simple statement that if a vector (or a set) lies in several planes, then it lies in their intersection to the almost-flat setting. Somewhat surprisingly, this turns out to be quite tricky. Lemma~\ref{smallcliques} is a culmination of the subsection and is a key complement to Lemma~\ref{clique}. It translates to the almost-flat setting the following simple argument. Assume that we are given a nearly $k$-distance set, and there is an $i$  such that $t_{i+1}/t_i>K$ for some large $K$. Then, using Lemma~\ref{clique}, we can split it into `red clusters', such that inside the cluster each distance is at most $t_i,$ while the distance between any two points in different red cluster is at least $t_{i+1}.$ Moreover, assume that for any red cluster $R_i$ and a point $p$ outside it the distances between $p$ and points from $R_i$ are very close to the same $t_j,$ $j\ge i+1$ (for later reference in the sketches, let us call this `property $\star$'). Then it is easy to see that $R_i$ should be nearly orthogonal to $p-r$ for any $r\in R_i$. Thus, the intuition tells us that all red clusters should be close to planes that are orthogonal to the plane of $B$, where $B$ is a set of points that includes exactly one point from each red cluster. This is not difficult to check for a set $B$ that is `not flat', e.g., an almost-regular simplex, but it requires  preparations in general.

In the first reading, we recommend to read Lemma~\ref{clique}, as well as the statements of Lemmas~\ref{limit} and~\ref{smallcliques} and omit the rest.\\

{\bf Proof of Theorem~\ref{nearly k} (i): } The proof is by induction on $k.$ We look for the last `big jump' in the sequence of distances, and additionally induct on the position $i$ of this jump (the size of the jump being dependent on the position: the smaller $i$ is, the bigger the jump is). If $i=k$ (i.e., $t_k/t_{k-1}>K$) then we use Lemma~\ref{smallcliques}, even in the simple variant that we described above. In this case, the set $B$ forms an almost-regular simplex, and we conclude that the red clusters must be almost-flat w.r.t. the plane, orthogonal to the plane of $B$. We then use the fact that $m_{k-1}(d-j)m_1(j)<m_k(d)$ for large $d$. If $i<k,$ then the argument is mostly similar, however, it has an additional twist to deal with one complication. Namely, we may not be able to guarantee that property $\star$ holds if two large distances have very small difference and thus apply Lemma~\ref{smallcliques}. But then Lemma~\ref{limit} (ii) helps us out, telling that in the limit these two distances glue. If this is the case, we ignore the orthogonality structure that can be given by Lemma~\ref{smallcliques}, and apply a bound of the form $m_{i-1}(d)m_{k-i}(d)<m_k(d),$ valid for $d$ large w.r.t. $k.$

{\bf Proof of Theorem~\ref{nearly k} (ii): } The proof follows a very similar outline as part (i). The case of two distances uses the same ideas as part (i), even in a simpler form. The case of three distances, however, poses some complications. First, if $t_3/t_2>K$ then the proof is as in the case $i=k$ above. If $t_2/t_1>K>t_3/t_2>1+2/K$ then the proof is as above for $i<k$, but with property $\star$, i.e., our main tool is the `almost orthogonal decomposition' via Lemma~\ref{smallcliques}. If $t_3/t_2<1+2/K$ then the two largest distances glue, and the argument as above allows us to finish the proof for $d\geq 6$. 
However, this is not sufficient for small $d,$ which we deal with separately. We have two subcases in this situation, each of which allow us to get more information on our nearly $3$-distance set. The first subcase is when $t_1\gg t_3-t_2.$ In that case, we are again able to guarantee the almost-orthogonality of red clusters to the almost-regular simplex $B$ (cf. Figure~2) and again finish using Lemma~\ref{smallcliques}. The second subcase is when $t_1$ is not so large w.r.t. $t_3-t_2.$ We are then able to find additional structure in between any two red clusters, say $R_i$ and $R_{\ell}$. Namely, if both distances $t_2,t_3$ appear from $r\in R_i$ to points in $R_{\ell}$, then at most one distance can appear between a point $r'\in R_{\ell}$ and any point in $R_i.$ This turns out to be sufficient to settle this case. Such `bizarre' cases as the last one also should give a hint on why it is difficult to extend the result to more distances.

We also note that an additional complication in both proofs is that we had to work with a notion of almost-flat sets (defined at the end of Section~\ref{sec21}) instead of flat sets. This is only needed in order to prove Theorem~\ref{thmsmallkexact}.\\

We omit the sketch of proof of Theorem~\ref{fixd}, which is short and simple, and go on to the Turán-type results.\\

{\bf Proof of Theorem~\ref{thmsmallkexact}: } The proof essentially uses certain supersaturation-type results for the Turán's theorem, and then refines the geometric structure of the relations between the points until we get a contradiction. We argue indirectly. 
Let $\ell-1$ be the largest size of an almost-flat $\varepsilon$-nearly $k$-distance set.  The main (and the one that is less standard) extremal graph theory tool is the following supersaturation result of Erd\H os that states that, once the number of edges in an $n$-vertex graph is at least $T(n,\ell-1)+1$, it contains a positive fraction of all $(\ell-2)$-cliques containing some edge $e.$ Using this and some other results and ideas from exremal graph theory, we start with a graph that has one more edge than the theorem states and find the following configuration: a multipartite graph $K_{1,1,2,\ldots,2}$ with $\ell$ parts, where any two points in the configuration lie at a distance $\gg cn^{1/d}$, the distances between any two fixed parts all fall into the same interval $[t_i,t_i+cn^{1/d}],$ and any angle $p_ip_jq_i$ for $p_i,q_i$ belonging to the $i$-th part and $q_j$ belonging to the $j$-th part, $i\ne j$, is at most $\alpha$. Then, after rescaling, we get a separated set with all distances falling into $[t_i',t_i'+\varepsilon]$ for some $i$ (thus the need for point of the  set before the scaling to be far apart) and that can be shown to be almost-flat w.r.t. some hyperplanes. 

Let us comment on the almost-flatness. Almost -flatness requires that the local flatness condition is satisfied for all but at most two vertices of the graph, and the reason for almost-flatness here (and, as a consequence, additional complications in the proof of Theorem~\ref{nearly k}) is the result of Erd\H os that we cited. It does not guarantee a positive density of $\ell$-cliques once we add an extra vertex (to do so, one has to add $\gamma n^2$ extra edges, as in Theorem~\ref{manynearly}), it only guarantees a positive density of $(\ell-2)$-cliques sharing some edge. As a result, the final graph has the first two parts of size $1$, and we cannot ensure the local flatness condition in these vertices.

The proof of Theorem~\ref{manynearly} follows the same logic and thus we omit it here.

\subsection{Auxiliary lemmas}\label{sec21}
\begin{lem}\label{clique} Let $S\subseteq \mathbb{R}^d$ be a finite set. Assume that for every $p_1,p_2\in S$ with $p_1\neq p_2$ the pair $\{p_1,p_2\}$ is coloured with red or blue, such that the distance between the points in any blue pair is strictly more than $3$ times as big as the distance between any red pair. If $B$ is a largest blue clique in $S$, then $S$ can be partitioned into $|B|$ vertex-disjoint red cliques $R_1,\dots,R_{|B|}$ having the following properties.
\begin{enumerate}
\item Each $R_i$ shares exactly one vertex with $B$.
\item If $p\in R_i$, $q\in R_j$ and $i\ne j$, then $\{p,q\}$ is blue.
\end{enumerate}
\end{lem}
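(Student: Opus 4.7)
The plan is to exploit the factor-$3$ separation between red and blue distances in order to show that the red subgraph already decomposes $S$ into disjoint cliques; then matching these cliques against the maximum blue clique $B$ forces them to number exactly $|B|$.

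First I would set $r^\ast := \max\{\|x-y\| : \{x,y\}\text{ is red}\}$ and $b^\ast := \min\{\|x-y\| : \{x,y\}\text{ is blue}\}$, so that the hypothesis translates into the uniform bound $b^\ast > 3r^\ast$. (If there are no red or no blue pairs, the lemma is trivial.)

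The key step is a one-line triangle inequality argument: if $\{p,q\}$ and $\{q,s\}$ are both red, then
\[
\|p-s\| \le \|p-q\| + \|q-s\| \le 2r^\ast < b^\ast,
\]
so $\{p,s\}$ cannot be blue and is therefore red. Together with symmetry, this means that ``equal or red'' is an equivalence relation on $S$, and hence $S$ partitions into red cliques $R_1,\ldots,R_m$, with every pair of points in distinct $R_i$'s being blue.

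Finally, I would identify $m$ with $|B|$. Picking one point from each $R_i$ yields a blue clique of size $m$, so $|B|\ge m$; conversely, any blue clique contains at most one vertex of each $R_i$ (pairs inside an $R_i$ are red), so $|B|\le m$. Thus $|B|=m$, and the maximum blue clique $B$ meets each $R_i$ in exactly one vertex, giving property~1. Property~2 is just the already-established fact that all cross-pairs between distinct $R_i$'s are blue.

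There is no real obstacle: the whole content of the lemma is the triangle-inequality observation that two red edges sharing a vertex force a third red edge, which is what makes the red graph a disjoint union of cliques and reduces the matching with $B$ to a triviality.
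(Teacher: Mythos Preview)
Your proof is correct and uses the same core idea as the paper, namely the triangle-inequality observation that two red edges sharing a vertex force the third edge to be red. The only cosmetic difference is organisational: the paper starts from the maximum blue clique $B$ and defines each $R_i$ as the red neighbourhood of $v_i\in B$, then checks covering, disjointness, and that cross-pairs are blue, whereas you first observe that ``equal or red'' is an equivalence relation, obtain the $R_i$ as its classes, and only afterwards match them with $B$ via a pigeonhole count; both routes are equally short.
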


\begin{proof} Take a largest blue clique $B=\{p_1,\ldots,p_s\}$. Construct $R_i$ by including in it $p_i$ and all the points that form a red pair with $p_i$. By the triangle inequality each $R_i$ is a red clique. Further, by the maximality of $B$, each point from $S$ forms a red distance with at least one point in $B$, and thus $R_1,\ldots, R_b$ cover $S$. Next, they are disjoint: if $p\in R_i\cap R_j$, then both $\{p,p_i\}$ and $\{p,p_j\}$ are red, which by triangle inequality implies that either $i=j$ or that $\{p_i,p_j\}$ is red (but the second possibility contradicts the definition of $B$). 
Finally, if $p\in R_i,$ $q\in R_j$, $i\ne j$, then $\{p,q\}$ must be blue by the triangle inequality: otherwise $\|p_i-q_j\|\le \|p_i-p\|+\|p-q\|+\|q-p_j\|$, and if all the pairs on the right are red, then $\{p_i,p_j\}$ is red.
\end{proof}

Note that a statement similar to that of Lemma \ref{clique} was also used in \cite{EMP}. The next lemma follows by a standard compactness argument.

\begin{lem}\label{limit}Let $S_1,S_2,\dots$ be a sequence such that $0\in S_i$ is an $\varepsilon_i$-nearly $k$-distance set in $\mathbb{R}^{d'}$ with distances $1\leq t_{i,1}<\dots<t_{i,k}$ and with $\varepsilon_i\to 0$. Further, let $\alpha_i$ be a sequence with $\alpha_i\to 0$. Then the following is true.
\begin{enumerate}
\item[(i)] If $k=1$ or $k\geq 2$ and there is a $K$ such that $\sup_i\max_{1\leq j<k}\frac{t_{i,j+1}}{t_{i,j}}\leq K$, then we have \mbox{$\limsup_{i\to \infty} |S_i|\leq m_k(d')$.} If additionally there is a $0\leq d\leq d'$ such that for every $i$ the set $S_i$ is $(p_i,d,\alpha_i)$-flat for some $p_i\in S_i$, then $\limsup_{i\to \infty} |S_i|\leq m_k(d)$.

\item[(ii)] If $k\geq 2$ and there is a $K$ such that $\sup_i\max_{1\leq j<k}\frac{t_{i,j+1}}{t_{i,j}}\leq K$ and 
for some $1\le r\le k-1$ we have $\lim_{i\to\infty}\frac{t_{i,r+1}}{t_{i,r}}=1$, then $\limsup_{i\to \infty} |S_i|\leq m_{k-1}(d')$. If additionally for every $i$  $S_i$ is $(p_i,d,\alpha_i)$-flat for some $p_i\in S_i$, then $\limsup_{i\to \infty} |S_i|\leq m_{k-1}(d)$.

\end{enumerate}
\end{lem}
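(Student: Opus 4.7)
The plan is a standard compactness argument run by contradiction. For part~(i), I would suppose that, after passing to a subsequence, $|S_i|=N$ is constant and exceeds the target bound $m_k(d')$ (respectively $m_k(d)$ in the flat subcase), and extract from $\{S_i\}$ a limit configuration that is a genuine $k$-distance set of size $N$ in $\R^{d'}$ (respectively in a $d$-plane), contradicting the definition of $m_k(d')$ (respectively $m_k(d)$).

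The main step is the normalization and limit construction. I would scale each $S_i$ by $1/t_1^i$; since $t_1^i\ge 1$, the rescaled error $\varepsilon_i/t_1^i$ still tends to $0$, while the rescaled distance parameters $s_j^i:=t_j^i/t_1^i$ satisfy $1=s_1^i\le s_j^i\le K^{k-1}$ by the uniform ratio bound. After translating a chosen base point $p_i^\star\in S_i$ to the origin, every point of the rescaled $S_i$ lies in the closed ball of radius $K^{k-1}+o(1)$. Fixing a labelling $S_i=\{q_i^1,\dots,q_i^N\}$ with $q_i^1=0$, I would repeatedly apply Bolzano--Weierstrass to extract a subsequence along which each $q_i^m$ converges to a point $q^m$ and each $s_j^i$ converges to some $s_j\in[1,K^{k-1}]$. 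Since pairwise distances in the rescaled $S_i$ are bounded below by $1-o(1)$, the limit points $q^1,\dots,q^N$ are pairwise distinct, and since every such distance lies in $\bigcup_j[s_j^i,\,s_j^i+\varepsilon_i/t_1^i]$, every pairwise distance of $S:=\{q^1,\dots,q^N\}$ belongs to $\{s_1,\dots,s_k\}$. Hence $S$ is a $k$-distance set of size $N$ in $\R^{d'}$, contradicting $N>m_k(d')$.

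For the flat subcase of~(i), I would choose $p_i^\star:=p_i$ from the flatness hypothesis, so that after translation $\Lambda_{p_i}$ is a $d$-dimensional linear subspace; using compactness of the Grassmannian $\mathrm{Gr}(d,d')$ together with the scale-invariance of angles, I would pass to a further subsequence with $\Lambda_{p_i}\to\Lambda$. The hypothesis that the angle between $q_i^m$ and $\Lambda_{p_i}$ is at most $\varepsilon_i\to 0$ forces $q^m\in\Lambda$ in the limit, so $S\subset\Lambda$ and $|S|\le m_k(d)$. Part~(ii) is handled by exactly the same scheme plus one observation: the hypothesis $t_{r+1}^i/t_r^i\to 1$ forces $s_{r+1}=s_r$ in the limit, so $S$ uses at most $k-1$ distinct distances and is therefore a $(k-1)$-distance set, yielding the sharper bound $m_{k-1}(d')$ (respectively $m_{k-1}(d)$).

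The only nonroutine bookkeeping will be to check that angle-to-a-plane is preserved under scaling and translation and that Grassmannian convergence transfers the inequality ``angle $\le\varepsilon_i$'' to the equality ``angle $=0$'' in the limit, but both are routine and I do not expect any substantive obstacle beyond the nested subsequence extractions.
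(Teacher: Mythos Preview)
Your proposal is correct and follows essentially the same compactness argument as the paper: rescale by $1/t_1^i$, use the uniform ratio bound to confine the sets to a fixed ball, pass to a convergent subsequence, and observe that the limit is a genuine $k$-distance (respectively $(k-1)$-distance) set lying in a $d$-plane in the flat case. Your write-up is in fact slightly more careful than the paper's sketch, making explicit the distinctness of the limit points and the Grassmannian compactness for the planes $\Lambda_{p_i}$.
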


\begin{proof}
  We only give details of the proof of (ii), part (i) can be done similarly. We start with the first part of the statement. Take any sequence $S_1,S_2,\ldots, $ satisfying the conditions and scale each $S_i$ by $\frac{1}{t_{i,1}}$. Abusing notation, we denote the new sets $S_i$ as well. Then the condition  $\sup_i\max_{1\leq j<k}\frac{t_{i,j+1}}{t_{i,j}}\leq K$ implies that there is an absolute $R>0$ such that each $S_i$ is contained in a ball $B$ with centre $0$ and of radius $R$. A volume argument implies that there exists an $M_K$ such that $|S_i|\leq M_K$ for all $i$. Take an infinite subsequence of $S_1,S_2\ldots$ in which all sets have fixed cardinality $M\leq M_K$. Using the compactness of $\underbrace{B\times\ldots\times B}_{M\text{ times}}$, select out of it a subsequence $S_{i_1},S_{i_2},\ldots$  that pointwise converges to the set $S:=\{P_1,\ldots, P_M\}\subset B$ with distances $T_1,\ldots,T_k$, and where $T_j = \lim_{s\to\infty} \frac{t_{i_s,j}}{t_{i_s,1}}$. Note that $T_{r+1}=T_r$ due to the assumption $\lim_{i\to\infty}\frac{t_{i,r+1}}{t_{i,r}}=1$. Thus $S$ is a $(k-1)$-distance set, and so $M = |S|\le m_{k-1}(d').$
  
 Let us next show the second part of the statement. Taking the set $S$ as above, we obtain that it must additionally be $(d,0)$-flat. This means that $S$ lies in a $d$-plane, thus $M= |S|\le m_{k-1}(d).$
 \end{proof}

The statement below allows us to get a grip on $M_k(d)$.

\begin{lem}\label{simple bound} For any $1\leq k$, and $0\leq d\leq d'$ we have \[N_k(d',d)\leq f(d,k)=\max \setbuilder{\prod_{i=1}^s m_{k_i}(d)}{\sum_{i=1}^s k_i=k}.\] In particular $M_k(d)<\infty$.
\end{lem}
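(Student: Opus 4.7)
The plan is induction on $k$. The base case $k=1$ is immediate from Lemma~\ref{limit}(i): taking any sequence of $(d,\alpha_i)$-flat $\varepsilon_i$-nearly $1$-distance sets with $\alpha_i,\varepsilon_i\to 0$ and sizes equal to $N_1(d',d)$ (the hypothesis on consecutive ratios is vacuous), the lemma yields $N_1(d',d)\le m_1(d)=d+1=f(d,1)$.

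For the inductive step, fix $k\ge 2$ and assume the bound for all smaller values. I would pick a sequence $S_i$ of $(d,\alpha_i)$-flat $\varepsilon_i$-nearly $k$-distance sets in $\mathbb R^{d'}$ with $|S_i|=N_k(d',d)$ and $\alpha_i,\varepsilon_i\to 0$, with distances $1\le t_1^i\le \ldots\le t_k^i$ (rescale so $t_1^i=1$). After passing to a subsequence, for each $1\le j\le k-1$ the ratio $t_{j+1}^i/t_j^i$ either stays bounded or tends to $\infty$. In the first case, when all $k-1$ ratios are uniformly bounded by some $K$, Lemma~\ref{limit}(i) applies and gives $\limsup|S_i|\le m_k(d)$, which is one of the terms defining $f(d,k)$. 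In the second case, pick some index $j$ with $t_{j+1}^i/t_j^i\to\infty$; then for all large $i$ we have $t_{j+1}^i>3(t_j^i+\varepsilon_i)$, so I would color a pair red if its distance lies in $\bigcup_{s\le j}[t_s^i,t_s^i+\varepsilon_i]$ and blue otherwise, and apply Lemma~\ref{clique} to obtain a blue clique $B_i$ and a partition $S_i=R_1\sqcup\ldots\sqcup R_{|B_i|}$ into red cliques, each meeting $B_i$ in exactly one vertex.

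Each $R_\ell$ is then an $\varepsilon_i$-nearly (at most $j$)-distance set, and $B_i$ is an $\varepsilon_i$-nearly (at most $k-j$)-distance set; both remain $(d,\alpha_i)$-flat since $(d,\alpha)$-flatness is inherited by subsets directly from the definition. The inductive hypothesis gives $|B_i|\le f(d,k-j)$ and $|R_\ell|\le f(d,j)$ for each $\ell$, hence $|S_i|\le f(d,k-j)\cdot f(d,j)$. Concatenating the partitions of $k-j$ and $j$ that realize these values produces a valid partition of $k$, so the product is itself bounded by $f(d,k)$. This closes the induction, and the ``in particular'' clause follows by taking $d'=d$: the $(d,\alpha)$-flatness condition in $\mathbb R^d$ is vacuous, so $M_k(d)=N_k(d,d)\le f(d,k)<\infty$. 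I expect the only real obstacle to be bookkeeping, namely verifying that the factor-of-$3$ separation required by Lemma~\ref{clique} eventually holds, that both $j$ and $k-j$ are strictly smaller than $k$ so the inductive hypothesis is applicable, and that flatness of subsets is truly inherited as claimed.
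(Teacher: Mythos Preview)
Your proposal is correct and follows essentially the same approach as the paper: induction on $k$, invoking Lemma~\ref{limit}(i) when all consecutive ratios are bounded, and otherwise splitting at a large gap via Lemma~\ref{clique} to reduce to smaller values of $k$. The only cosmetic differences are that the paper works with a single set at sufficiently small $\varepsilon,\alpha$ and uses the fixed threshold $3$ (choosing the \emph{largest} index $i$ with $t_i/t_{i-1}>3$), whereas you phrase it as a sequence argument and pass to a subsequence; the bookkeeping points you flag (factor-of-$3$ separation, $1\le j\le k-1$, and inheritance of $(d,\alpha)$-flatness by subsets) are all routine and go through exactly as you indicate.
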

Note the difference in the definition of $M'_k(d)$ and the function $f$ above.

\begin{proof}[Proof]
First note that $f$ satisfies $f(d,k_1+k_2)\ge f(d,k_1)f(d,k_2)$ for any $1\leq k_1,k_2$. 

Let $S$ be a $(d,\alpha)$-flat $\varepsilon$-nearly $k$-distance set in $\mathbb{R}^d$ with distances $1\leq t_1<\dots<t_k$ and with sufficiently small $\alpha,\varepsilon$. We need to show that $|S|\le f(d,k).$ For each $d$ we induct on $k$.

If $\frac{t_i}{t_{i-1}+\varepsilon}\leq 3$ holds for every $1<i\leq k$ (or if $k=1$), then by Lemma~\ref{limit} (i) we have $|S|\le m_k(d)\le f(d,k)$. Otherwise, let $i$ be the largest index such that $\frac{t_i}{t_{i-1}+\varepsilon}> 3$. For every $p_1,p_2\in S$ with $p_1\neq p_2$ colour the pair $\{p_1,p_2\}$ with blue if $\|p_1-p_2\|\geq t_i$ and with red otherwise. Let $B$ be a largest blue clique in this colouring. By induction, $|B|\leq f(d,k-i+1)$ if $\alpha$ and $\varepsilon$ are sufficiently small. Next, by Lemma~\ref{clique}, $S$ can be covered by $|B|$ vertex disjoint red cliques $R_1,\dots,R_{|B|}$. By induction again, the cardinality of any red clique is at most $f(d,i-1)$, thus \begin{equation*}|S|\leq f(d,k-i+1)f(d,i-1)\le f(d,k).\end{equation*}
\end{proof}

The next three statements 
describe some cases when $\alpha$-flatness with respect to different subspaces can be ``combined'' into $\alpha$-flatness with respect to a smaller-dimensional subspace. 
For a linear subspace $\Gamma$ we denote by $\Gamma^{\bot}$ the orthogonal complement of $\Gamma$, and for a vector $v\in \mathbb{R}^d\setminus \{0\}$ we denote by $v^{\bot}$ the $(d-1)$-dimensional subspace orthogonal to $v$.

\begin{lem}\label{flatslemma} For any $\gamma'>0$ there exists $\beta_0>0$ such that the following is true for all $0<\beta\leq \beta_0$ and all $d\geq 1$. Let $R\subseteq \mathbb{R}^d$ be a set of points, and let $p\in R$. Further, let $V\subset \mathbb{R}^d\setminus \{0\}$ be a set of vectors such that for every $v\in V$ the set $R$ is \mbox{$(p,d-1,\beta^d)$-flat} with respect to $v^{\bot}$. 
If $j\geq 1$ is the smallest integer for which $V$ is $(j,\beta^j)$-Flat, 
then $R$ is $(p,d-j,\gamma')$-flat.
\end{lem}

\begin{proof}We are going to prove that for every $\gamma'>0$ there exists  $\beta_0 >0$ such that the following is true for every $0<\beta\leq \beta_0$. Let $w\in \mathbb{R}^d\setminus \{0\}$ be a vector, and let $V\subset \mathbb{R}^d\setminus \{0\}$ be a set of vectors, such that for every $v\in V$ the angle between $w$ and $v^{\bot}$ is at most $\beta^d$. (This is equivalent to $|\langle w/\|w\|,
v/\| v\| \rangle |\leq \beta^d$, hence also to that the angle between $v$ and $w^{\bot}$ is at most $\beta^{d}$.) If $j\geq 1$ is the the smallest integer for which $V$ is $(j,\beta^j)$-Flat, and $V$ is $(j,\beta^j)$-Flat with respect to $\Gamma$, then the angle between $w$ and $\Gamma^{\bot}$ is at most $\gamma'$. Applying this for every vector of the form $w=q-p$, where $q\in R\setminus \{p\}$, implies the statement, since $\Gamma'$ is of dimension $d-j$. 

Arguing indirectly, assume that for every $\beta_0$ there is a $\beta<\beta_0$ such that the angle between $w$ and $\Gamma^{\bot}$ is larger than $\gamma'$. We will show that then $V$ is $(j-1,\beta^{j-1})$-Flat with respect to $\Gamma\cap w^{\bot}$. If $\beta$ is sufficiently small, then $w\notin \Gamma^{\bot}$, thus the dimension of $\Gamma\cap w^{\bot}$ is $j-1$, contradicting the minimality assumption on $j$.

We may assume that $\|w\|=1$ and $\|v\|=1$ for every $v\in V$.
Let $\{w_1,\dots,w_{d}\}$ be an orthonormal basis of $\R^d$, where additionally $\{w_1,\ldots,w_{j-1}\}$ is an (orthonormal) basis of $\Gamma\cap w^{\bot}$, further $\{w_1,\dots, w_j\}$ is a basis of $\Gamma$, and  $\{w_{j+1},\dots,w_d\}$ is a basis of $\Gamma^{\bot}$. Then $w$ can be written as 
$w=\eta_jw_j+\dots+\eta_dw_d$, where 
\begin{equation}\label{sinus}
|\eta_j|> \sin \gamma',
\end{equation}
since $w$ has angle larger than $\gamma'$ with $\Gamma^{\bot}$. Next, any $v\in V$ can be written as 
$\theta_1 w_1+\dots+\theta_d w_d$, where \[\theta_1^2+\dots+\theta_j^2\geq \cos^2 (\beta^j),\] 
since $v$ has an angle at most $\beta^j$ with $\Gamma$. Further, we have 
\[|\langle v, w\rangle|=|\theta_j \eta_j+\theta_{j+1}\eta_{j+1}\dots +\theta_d \eta_d|\leq\beta^d,\] 
since the angle of $w$ and $v$ is in $[\frac{\pi}{2}-\beta^d,\frac{\pi}{2}+\beta^d]$. By the Cauchy-Schwarz inequality we  have 
$$|\theta_{j+1}\eta_{j+1}+\dots +\theta_d \eta_d|\leq \sqrt{\theta_{j+1}^2+\ldots+\theta_d^2}\cdot \|w\|\le \sqrt{1-\cos^2 (\beta^j)}=\sin(\beta^j)\le \beta^j.$$  By the triangle inequality and the previous two inequalities, we get  
\[|\theta_j \eta_j|\leq |\theta_j \eta_j+\dots +\theta_d \eta_d|+|\theta_{j+1}\eta_{j+1}\dots +\theta_d \eta_d|\le 2\beta^j.\] If $\beta<\frac{\sin^2\gamma'}{4}$, then the inequality above together with \eqref{sinus} implies $|\theta_j|=|\theta_j\eta_j|/|\eta_j|\leq 2\beta^j/\sin \gamma'<\beta^{j-0.5}$. Thus, if $\beta$ is sufficiently small, then
\[\theta_1^2+\dots+\theta_{j-1}^2\geq \cos^2 (\beta^j)-\beta^{2j-1}\ge \cos^2 (\beta^{j-1}),\]
where the last inequality follows from the fact that $\cos\theta= 1-(\frac{1}{2}+o(1))\theta^2$ for small $\theta$. This means that the  angle between  $v$ and $\Gamma\cap w^{\bot}$ is at most  $\beta^{j-1}.$  Since this is valid for any $v\in V$, we conclude that $V$ is $(j-1,\beta^{j-1})$-flat with respect to $\Gamma\cap w^{\bot}$, a contradiction.
\end{proof}

\begin{lem}\label{smallcliques}
For any $d'$ and $\gamma>0$ there exist $\beta'>0$ such that the following is true for any $0<\beta\leq \beta'$, sufficiently small $0<\alpha\leq \alpha(\beta)$, and sufficiently large $K\geq K(\alpha,\beta)$. Let $B\cup R\subseteq \mathbb{R}^{d'}$ be a separated set with  $B\cap R=\{b\}$, and with the following two properties.
\begin{enumerate}
    \item For any $r,r'\in R$ with $r\neq r'$ and for any $b'\in B\setminus \{b\}$ we have $K\|r-r'\|\leq \|b-b'\|$. 
    \item For any $b'\in B\setminus \{b\}$ there is a number $t>0$ such that for any $r\in R$ we have $\|b'-r\|\in [t,t+\beta^{d'+1}]$.
\end{enumerate}
Further, let $j\geq 1$ be the lowest dimension such that $B$ is $(b,j,\beta^j)$-flat. Assume that for some $r\in R$ and $d\leq d'$ the set $B\cup R$ is $(r,d,\alpha)$-flat.  Then $R$ is $(r,d-j,\gamma)$-flat.
\end{lem}

\begin{proof}
For $|R|\leq 1$ the statement holds by definition. So we will
suppose that $|R| \geq 2$. Let $\beta_0$ as in Lemma \ref{flatslemma} with $\gamma'=\frac{\gamma}{2}$, and let $\beta'\leq \beta_0$ be sufficiently small, to be further specified later. Let $r\in R$ be any point of $R$. Assume that $B\cup R$ is $(r,d,\alpha)$-flat with respect to a $d$-dimensional subspace $\Gamma_r$. Let $\pi_r(V)$ be the projection of $V:=\setbuilder{b'-r}{b'\in B\setminus \{b\}}$ on $\Gamma_r$. Assume that $j_r>1$ is the lowest dimension such that $\pi_r(V)$ is $(j_r,\frac{\beta^{j_r}}{2})$-Flat. Further, let $\Gamma_r'$ be a subspace of $\Gamma_r$ of dimension $j_r$ such that $\pi_r(V)$ is $(j_r,\frac{\beta^{j_r}}{2})$-Flat with respect to $\Gamma_r'$.

By Condition 1, for any $b'\in B\setminus \{b\}$ the angle between $(b'-b)$ and $(b'-r)$ is at most $\alpha$ if $K$ is sufficiently large. The angle between $(b'-r)$ and $\pi_r(b'-r)$
is at most $\alpha$. Further, the angle between $\pi_r(b'-r)$ and $\Gamma'_r$ is at most $\frac{\beta^{j_r}}{2}$. These, together with the triangle inequality imply that $B$ is $(b,j_r,\beta^{j_r})$-flat with respect to $\Gamma'_r$ if  $2\alpha+\frac{\beta^{j_r}}{2}\leq \beta^{j_r}$. By the minimality of $j$, it follows then that $j_r\geq j$.

Let $\Lambda_r$ be the affine plane in $\mathbb{R}^{d'}$ of dimension $j_r$ through $r$ parallel to $\Gamma_r$. For a point $p\in \mathbb{R}^{d'}$ we denote by $\pi'_r(p)$ the projection of $p$ on $\Lambda_r$. Similarly, for a set $X\subseteq \mathbb{R}^{d'}$, let $\pi'_r(X)$ denote the projection of $X$ on $\Lambda_r$. Note that $\pi'_r(r)=r$. Let $(\pi_r'(b')-r)^{\bot_r}$ denote the $(d-1)$-dimensional subspace in $\Gamma_r$ that is orthogonal to $\pi_r'(b')-r$. Note that the vector $\pi_r'(b')-r$ lies in $\Gamma_r$.
\begin{cla}
For every $b'\in B\setminus \{b\}$ and $r\in R$, the projection $\pi'_r(R)$ is $(r,d-1, \beta^{d})$-flat with respect to $(\pi_r'(b')-r)^{\bot_r}$ if $\alpha$ and $\beta$ are sufficiently small and $K$ is sufficiently large. 
\end{cla}
\begin{proof}
Let $r'\in R\setminus \{r\}$ be any point of $R$. Condition 2 gives that \mbox{$\big | \|b'-r\|-\|b'-r'\| \big | \leq 2\beta^{d'+1}$}. Since $1\leq \|r-r'\|\leq \frac{t+\beta^{d'+1}}{K}$ , we obtain that  $\angle b'rr'\in [\frac{\pi}{2}-\frac{\beta^{d'}}{2},\frac{\pi}{2}+\frac{\beta^{d'}}{2}]$, if $\beta$ is sufficiently small and $K$ is sufficiently large. Further, we have $\angle r'r\pi'_r(r')\leq \alpha$ and $\angle\pi_r'(b')rb'\leq \alpha$. Thus, $\angle \pi_r'(b')r\pi'_r(r')\in [\frac{\pi}{2}-\beta^{d},\frac{\pi}{2}+\beta^{d}]$ if $\alpha$ is sufficiently small.
Since $\pi'_r(R)$ is contained in $\Lambda_r$, we obtain that for any $b'\in B$ the set $\pi'_r(R)$ is $(r,d-1,\beta^{d})$-flat with respect to $(\pi_r'(b')-r)^{\bot_r}$.
\end{proof}

Now we apply Lemma \ref{flatslemma} with $\pi_r(V)$ and $\gamma'=\frac{\gamma}{2}$, and obtain that $\pi'_r(R)$ is $(r,d-j_r,\frac{\gamma}{2})$-flat. The inequality $j_r\geq j$ implies that $\pi'_r(R)$ is $(r,d-j,\frac{\gamma}{2})$-flat. Since for any $r'\in R$ the angle between $r'-r$ and $\pi_r'(r')-r$ is at most $\alpha$, it follows that $R$ is $(r,d-j,\gamma)$-flat if $\alpha$ is sufficiently small.
\end{proof}

The proof of the following lemma is a simple calculation.

\begin{lem}\label{global} Let $S\subseteq \mathbb{R}^d$ be a set such that $\frac{\|p_1-p_2\|}{\|q_1-q_2\|}\leq K$ holds for any $p_1,p_2,q_1,q_2\in S$ with $q_1\neq q_2$. If $S$ is $(p,j,\alpha)$-flat for some $p\in S$, then $S$ is $(q,j,20(K\alpha)^{1/2})$-flat for any $q\in S$.
\end{lem}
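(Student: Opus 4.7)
The plan is to take, for any $q\in S$, the $j$-plane $\Lambda_q$ witnessing $(q,j,20(K\alpha)^{1/2})$-flatness to be the translate of $\Lambda_p$ that passes through $q$, so that $\Lambda_q$ is parallel to $\Lambda_p$. Writing $L:=\Lambda_p-\Lambda_p=\Lambda_q-\Lambda_q$, the task reduces to bounding, for every $r\in S\setminus\{q\}$, the length of the projection of $q-r$ onto $L^{\perp}$ relative to $\|q-r\|$. The case $q=p$ is immediate with $\Lambda_q=\Lambda_p$, so I will assume $q\ne p$.

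The key step is the triangle decomposition $q-r=(p-r)-(p-q)$. By the $(p,j,\alpha)$-flatness hypothesis, the projections of $p-r$ and $p-q$ onto $L^{\perp}$ have lengths at most $(\sin\alpha)\|p-r\|$ and $(\sin\alpha)\|p-q\|$ respectively. Linearity of projection then gives
\[\|\mathrm{proj}_{L^{\perp}}(q-r)\|\le (\sin\alpha)(\|p-r\|+\|p-q\|),\]
and the distance-ratio hypothesis $\|p-r\|,\|p-q\|\le K\|q-r\|$ upgrades this to
\[\|\mathrm{proj}_{L^{\perp}}(q-r)\|\le 2K(\sin\alpha)\|q-r\|.\]
Consequently, the angle between $q-r$ and $\Lambda_q$ is at most $\arcsin(2K\sin\alpha)$.

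Finally, to match the stated bound, I will check the numerical inequality $\arcsin(2K\sin\alpha)\le 20(K\alpha)^{1/2}$. If $20(K\alpha)^{1/2}\ge\pi/2$ the conclusion is vacuous since any angle between a vector and a plane lies in $[0,\pi/2]$; otherwise $K\alpha$ is small enough that $2K\sin\alpha<1/2$, whence $\arcsin(2K\sin\alpha)\le 4K\alpha$, and the inequality $4K\alpha\le 20(K\alpha)^{1/2}$ is equivalent to $K\alpha\le 25$, which also holds in this regime. Since $r$ was arbitrary, $S$ is $(q,j,20(K\alpha)^{1/2})$-flat with respect to $\Lambda_q$. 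As the paper already flags, there is no real obstacle here; the proof amounts to threading the definitions correctly through the triangle identity, and the $(K\alpha)^{1/2}$ in the statement is in fact a loose form of the sharper $O(K\alpha)$ bound produced by the calculation.
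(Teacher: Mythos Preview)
Your proof is correct and in fact cleaner than the paper's. Both arguments use the same plane (the translate of $\Lambda_p$ through $q$), but the paper takes a more roundabout route: it replaces $q$ and $r$ by points $q',r'\in\Lambda$ at the same distance from $p$ (so that $p-q'$, $p-r'$ make small angles with $p-q$, $p-r$), and then bounds the angle between $q-r$ and $q'-r'$ via the law of cosines; this is where the square root in $20(K\alpha)^{1/2}$ comes from. Your approach bypasses the auxiliary points entirely by writing $q-r=(p-r)-(p-q)$ and projecting linearly onto $L^{\perp}$, which immediately gives the stronger linear bound $\arcsin(2K\sin\alpha)=O(K\alpha)$. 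As you note, the paper's $(K\alpha)^{1/2}$ is then just a loose envelope for your estimate. The paper's method has no obvious advantage here; your argument is both shorter and sharper.
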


\begin{proof}
Let $S$ be $(p,j,\alpha)$-flat with respect to $\Gamma$. We will show that for any $q,r\in S$ with $q\neq r$ there is a vector $v\in \Gamma$ such that the angle between $q-r$ and $v$ is at most $20(K\alpha)^{1/2}$.

Let $v_q,v_r\in \Gamma$ be vectors such that the angle between $q-p$ and $v_q$, and the angle between $r-p$ and $v_r$ is at most $\alpha$. Further, assume that $\|q-p\|=\|v_q\|$ and $\|r-p\|=\|v_r\|$. The following claim, whose proof is deferred to the Appendix, finishes the proof.

\begin{cla}\label{beta} The angle between $(v_q-v_r)\in \Gamma$ and $q-r$ is at most $20(K\alpha)^{1/2}$.
\end{cla}
\end{proof}

We need the following seemingly technical variant of $\alpha$-flatness, which is however crucial for proving Theorem~\ref{thmsmallkexact}. For $d\geq 1$ we say that $P$ is \emph{almost $(d,\alpha)$-flat} if $P$ is $(p,d,\alpha)$-flat for all but at most two $p\in P$. Note that this means if $P$ is almost $(0,\alpha)$-flat and $\alpha\leq \frac{\pi}{2}$, then $|P|\leq 2$. 
We also use the convention that for any $\alpha>0$ a set $P$ is $(0,\alpha)$-flat if $|P|\leq 2$.
Let $A_k(d',d)$ denote the largest number $A$ such that for any $\varepsilon,\alpha>0$ there exists an almost $(d,\alpha$)-flat $\varepsilon$-nearly $k$-distance set in $\mathbb{R}^{d'}$ of cardinality $A$. Note that $A_k(d',0) = 2.$ For $d\geq 1$ let $A_k(d)= A_k(d,d-1)$.

Let us 
summarise the trivial inequalities between the different parameters we introduced: 
\begin{equation}\label{eqrelation} m_k(d)\le M_k'(d)\le M_k(d)\le N_k(d',d)\le A_k(d',d)\leq  M_k(d'),\end{equation}
for any $d'\geq d\geq 0$.

\subsection{Proof of Theorem~\ref{nearly k} (i)}\label{sec22}
We will prove that for any $k\geq 1$ if $d$ is sufficiently large compared to $k$, then for any $d'\geq d$ we have $A_k(d',d) = m_k(d)$. This is sufficient in view of \eqref{eqrelation}.
We induct on $k$. The case $k=1$ is implied by Lemma~\ref{limit} (i). Assume that the statement of Theorem~\ref{nearly k} is true for every $k'\leq k-1$ with $d>D_{k'}$. We will prove the statement for $k$ and $d>D_k$, where the quantity $D_k$ is chosen later.

For $K>0$ and for an $\varepsilon$-nearly $k$-distance set $S$ with distances $1\leq t_1< \dots <t_k$ let $\phi_S(K)=1$ if $\max_{1< i\leq k}\frac{t_{i}}{t_{i-1}+\varepsilon}< K$, and otherwise let $\phi_S(K)$ be the largest index $1< i\leq k$ such that $\frac{t_{i}}{t_{i-1}+\varepsilon}\ge K$.

\begin{lem}\label{induction} 
If $\varepsilon_k$ and $\alpha_k$ are sufficiently small and $d>D_k$ for some sufficiently large $D_k$, then the following is true for every $1\leq i \leq k$. There exist $K_i\geq K_{i+1}\geq \ldots\geq K_k$ such that if $S$ is an almost $(d,\alpha_k)$-flat $\varepsilon_k$-nearly $k$-distance set in $\mathbb{R}^{d'}$, and for some $j\geq i$ we have $\phi_S(K_j)\geq j$, then $|S|\leq m_k(d)$.
\end{lem}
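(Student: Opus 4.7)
The proof is by downward induction on $i$, from $i=k$ down to $i=1$. Applied at $i=1$ the lemma yields Theorem~\ref{nearly k}(i) since $q_S(K_1)\ge 1$ holds trivially. The constants $K_k\le K_{k-1}\le\dots\le K_1$ are chosen from large $i$ to small $i$, so each $K_i$ is free to be as large as we need subject to $K_{i-1}\ge K_i$; each may depend on $k$, $d$, $d'$, and the desired $\alpha'$. The base case $i=k$ is a direct special case of the argument for the inductive step below.

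Inductive step. Assume the statement for $i+1$ and suppose $q_S(K_i)\ge i$. If $q_S(K_i)\ge i+1$, then since $K_{i+1}\le K_i$ also $q_S(K_{i+1})\ge i+1$, and the IH closes this case. So we may assume $q_S(K_i)=i$: there is a gap $t_i/t_{i-1}\ge K_i$ while the remaining consecutive ratios $t_{j+1}/t_j<K_i$ for $j\ge i$ stay bounded. Colour a pair red if its length is in $[t_1,t_{i-1}+\varepsilon]$ and blue otherwise. Since $K_i>3$, Lemma~\ref{clique} partitions $S$ into red cliques $R_1,\dots,R_b$ incident to a maximum blue clique $B$ of size $b$. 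Rescale so that $t_k\asymp 1$, whence $t_{i-1}\le K_i^{-1}$ is arbitrarily small.

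Fix $l$, let $b\in R_l\cap B$, and let $j$ be the smallest dimension for which $B$ is $(b,j,\beta^j)$-flat through some $j$-plane. The two hypotheses of Lemma~\ref{smallcliques} hold: (1) the ratio between blue and red distances is at least $K_i$, much larger than the required constant; and (2) for any $b_1\in B\setminus\{b\}$ and $r\in R_l$, the triangle inequality gives $|\,\|b_1-r\|-\|b_1-b\|\,|\le\|b-r\|\le t_{i-1}+\varepsilon\ll \beta^{d'+1}$. Hence Lemma~\ref{smallcliques}(i) yields that each $R_l$ is $(d-j,\alpha)$-flat, hence almost so. The blue clique $B$ has bounded distance ratios, so Lemma~\ref{limit}(i), applied along a subsequence with $\varepsilon,\beta\to 0$ and $j$ fixed, gives $|B|\le m_{k-i+1}(j)$. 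For each $R_l$, since $i-1<k$ and $R_l$ is an $\varepsilon$-nearly $(i-1)$-distance set $(d-j,\alpha)$-flat in $\R^{d'}$, the outer induction on $k$ (already-established identity $A_{i-1}(d',d-j)=m_{i-1}(d-j)$) gives $|R_l|\le m_{i-1}(d-j)$, provided $d-j>D_{i-1}$; for smaller $d-j$, Lemma~\ref{simple bound} gives an absolute bound $|R_l|\le C(k)$. Summing,
\[|S|\ \le\ m_{k-i+1}(j)\cdot m_{i-1}(d-j)\]
(with the obvious modification in the small-$(d-j)$ edge case).

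The main obstacle is the closing numerical inequality
\[m_{k-i+1}(j)\cdot m_{i-1}(d-j)\ \le\ m_k(d)\qquad\text{for every } 0\le j\le d,\]
which is precisely what forces $d$ to be large relative to $k$. Combining the Bannai-Bannai-Stanton upper bound $m_k(d)\le\binom{d+k}{k}$ on each factor with the lower bound $m_k(d)\ge\binom{d+1}{k}$ from \eqref{eqlb}, both sides are asymptotically $d^k/k!$; the leading-order constant on the product upper bound is $\binom{k}{i-1}(k_1/k)^{k_1}(k_2/k)^{k_2}\le 1$ by the entropy/binomial estimate, with strict inequality except at degenerate values of $j$ that are absorbed by the Lemma~\ref{simple bound} crude case. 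Choosing $D_k$ sufficiently large so that the lower-order corrections are dominated finishes the induction.
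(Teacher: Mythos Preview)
Your verification of hypothesis~(2) of Lemma~\ref{smallcliques} has a genuine gap that the rescaling does not fix. The triangle inequality gives $\bigl|\|b_1-r\|-\|b_1-b\|\bigr|\le t_{i-1}+\varepsilon$, but for Lemma~\ref{smallcliques} to yield any angular information this variation must be tiny compared to the \emph{smallest} distance inside $R$ (that is what forces $\angle b_1r_1r_2$ near $\pi/2$ in its proof; the lemma explicitly assumes the set is separated). Your rescaling makes $t_{i-1}$ small in absolute terms, but it shrinks the red distances to the same order, destroying separatedness; without rescaling, $t_{i-1}\ge 1\gg\beta^{d'+1}$. Concretely, take $i=2$, $k=3$, $t_1=1$, $t_2=K_2$, $t_3=K_2+1$, and place $b_1,r_1,r_2$ collinearly with $\|b_1-r_1\|=K_2$, $\|b_1-r_2\|=K_2+1$, $\|r_1-r_2\|=1$. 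Your bound is met, yet the angle is $\pi$, so no flatness of $R$ follows. This is exactly why the paper splits into two cases on whether \emph{all} blue consecutive ratios $t_{j}/t_{j-1}$ ($j>i$) exceed some fixed $c>1$: when they do, the blue intervals are spaced far enough that all distances $\|b_1-r\|$ land in a \emph{single} interval of width $\varepsilon\le\beta^{d'+1}$, so condition~(2) holds without rescaling; when some ratio is below $c$, one instead invokes Lemma~\ref{limit}(ii) to get the sharper bound $|B|\le m_{k-i}(d)$ and pairs it with $|R|\le A_{i-1}(d',d)$ (no dimension drop for $R$ in this branch). Your base case $i=k$ happens to be fine only because there is a single blue interval.

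A second, smaller issue: Lemma~\ref{smallcliques}(i) gives that $R_l$ is $(d'-j,\alpha)$-flat, not $(d-j,\alpha)$-flat. To descend to $d-j$ you need part~(ii), which requires $S$ to be $(r,d,\alpha)$-flat at the point $r$; this is where the ``almost'' in almost-flatness is actually used, and why the output is that $R_l$ is \emph{almost} $(d-j,10\alpha)$-flat. Your write-up invokes only~(i) and then asserts $(d-j)$-flatness directly.
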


Note that since $\phi_S(K)\geq 1$ holds for any $K$, Lemma~\ref{induction} with $i=1$ implies the theorem. We phrased Lemma~\ref{induction} in this seemingly strange form, because this way it is convenient to prove it by backwards induction on $i$.

\begin{proof} The proof is by backwards induction on $i$. We start by showing that the statement is true for $i=k$ with some sufficiently large $K_k\geq 4$.

Assume that $\phi_S(K_k)=k$ and for every $p_1,p_2\in S$ with $p_1\neq p_2$ colour the pair $\{p_1,p_2\}$ with blue if $\|p_1-p_2\|\in [t_k,t_k+\varepsilon_k]$ and with red otherwise. Let $B$ be a largest blue clique in $S$. Then $S$ can be partitioned into $|B|$ red cliques $R_1,\dots,R_{|B|}$ as in Lemma~\ref{clique}. Let $R$ be a largest red clique and let $R\cap B=\{b\}$. We will apply Lemma~\ref{smallcliques} to $R\cup B$ with a sufficiently small
$\gamma$ (to be chosen later) to bound $|R|$.

Let $\beta'$ be as in Lemma~\ref{smallcliques}, and $0<\beta\leq \beta'$ be sufficiently small, to be set later. We may assume that $\alpha_k\leq \alpha(\beta)$ and choose $K_k$ such that $K_k\geq 2K(\alpha_k,\beta)$. We may apply Lemma~\ref{smallcliques}, as Conditions $1,2$ are automatically satisfied if $\varepsilon_k\le \beta^{d'+1}$. Let $j$ be as in the Lemma with $\alpha=\alpha_k$. Since $S$ is almost $(d,\alpha_k)$-flat, we have that $R\cup B$ is $(r,d,\alpha_k)$-flat for all but at most two $r\in R$. If $r\in R$ is such that $R\cup B$ is $(r,d,\alpha_k)$-flat, then by Lemma~\ref{smallcliques} we obtain that $R$ is $(r,d-j,\gamma)$-flat.

Thus, if $\gamma\leq \alpha_{k-1}$ then we have $|R|\leq A_{k-1}(d',d-j)$. Note also that $B$ is $(j,20\beta^{j/2})$-flat by Lemma \ref{global}, thus $|B|\leq m_1(j)=j+1$ by Lemma~\ref{limit} (i) if $\beta$ and $\varepsilon_k$ are sufficiently small.
These imply that
\[|S|\leq |B||R|\leq (j+1)A_{k-1}(d',d-j).\]
We separate two cases in order to bound $(j+1)A_{k-1}(d',d-j)$.

\medskip

\bf Case 1: \rm $d-j\geq D_{k-1}$. In this case we obtain 
\begin{equation*}
(j+1)A_{k-1}(d',d-j)
\leq  (j+1)\binom{d-j+k-1}{k-1}\leq \binom{d+1}{k}\leq m_k(d),
\end{equation*}
where the first inequality is true by induction and by \eqref{eqbbs}, the second is true if $d$ is sufficiently large, and the third is true by \eqref{eqlb}.

\medskip

\bf Case 2: \rm $d-j< D_{k-1}$. In this case  we have  $A_{k-1}(d',d-j)\leq 2+N_{k-1}(d',d-j)$. By Lemma~\ref{simple bound} we have  $N_{k-1}(d',d-j)\leq C_k$ where $C_k$ depends only on $k$ and $D_{k-1}$, hence depends only on $k$. Thus we obtain
\[ (j+1)A_{k-1}(d',d-j)\leq (j+1)(C_k+2)\leq \binom{d+1}{k}\leq m_k(d),\]
where the second inequality is true $d$ is sufficiently large, and the third inequality is \mbox{true by \eqref{eqlb}.}

\medskip

We now turn to the induction step.

Assume that the statement holds for every $i+1$ with $K_{i+1}\geq K_{i+2}\geq \dots \geq K_k$, and let us prove that if $K_i$ is sufficiently large, then it holds for $i$ with $K_{i}\geq K_{i+1}\geq \dots \geq K_k$. Again, for every $p_1,p_2\in S$ with $p_1\neq p_2$ colour the pair $\{p_1,p_2\}$ with blue if $\|p_1-p_2\|\geq t_i$ and with red otherwise. Let $B$ be a largest blue clique in $S$. Then $S$ can be partitioned into $|B|$ red cliques $R_1,\dots,R_{|B|}$ as in Lemma~\ref{clique}.

We may assume that $\phi(K_{i+1}) \le i$, otherwise we are done by induction. This implies that $\max_{i<j\leq k}\frac{t_{j}}{t_{j-1}+\varepsilon_k}\leq K_{i+1}$. Thus, by  Lemma~\ref{limit} (ii) we may assume that there exists a sufficiently small constant $\delta>0$ such that the following is \mbox{true for sufficiently small $\alpha_k$ and $\varepsilon_k$:}
\begin{equation}\label{eq413}\text{if }\min_{i<j\leq k}\frac{t_{j}}{t_{j-1}+\varepsilon_k}<1+\delta,\text{ then }|B|\leq m_{k-i}(d).\end{equation}

Set $K_i'=\max\left \{\frac{2}{\delta},K_{i+1}\right \}$. We are ready to verify the statement of the lemma for sufficiently large  $K_i>2K_i'$.
We separate two cases.

\medskip

\bf Case 1: \rm $\min_{i<j\leq k}\frac{t_{j}}{t_{j-1}+\varepsilon_k}< 1+\delta$. If $R$ is a largest red clique then we obtain
\begin{multline*}|S|\leq |B||R|\leq m_{k-i}(d)A_{i-1}(d',d)\leq \binom{k-i+d}{k-i}\binom{i-1+d}{i-1}< \binom{d+1}{k}\leq m_k(d),
\end{multline*}
where the second inequality follows from \eqref{eq413} and the fact that $R$ is an almost $(d,\alpha_k)$-flat $\varepsilon_k$-nearly $(i-1)$-distance set and that $\alpha_k,\varepsilon_k$ are sufficiently small, the third inequality follows by induction and by \eqref{eqbbs}, the fourth is true if $d$ is sufficiently large, and \mbox{the last is true by \eqref{eqlb}.}

\medskip

\bf Case 2: \rm $\min_{i<j\leq k}\frac{t_{j}}{t_{j-1}+\varepsilon_k}\geq 1+\delta$. Let $R$ be a largest red clique and let $R\cap B=\{b\}$.

We will apply Lemma~\ref{smallcliques} to $R\cup B$ with a sufficiently small $\gamma$ (to be chosen later) to bound $|R|$. Let $\beta'$ be as in Lemma~\ref{smallcliques} and
$0<\beta\leq \beta_0$ sufficiently small to be specified later. We may assume that $\alpha_k\leq \alpha(\beta)$ and choose $K_i$ such that $K_i\geq 2K(\alpha_k,\beta)$. Then Condition $1$ is satisfied automatically. We may further assume that $\varepsilon_k\le \beta^{d'+1}$. Then Condition $2$ is satisfied as well with $\beta$ if ``all distances from a point in $B$ to $R$ fall in one interval''. That is, to apply the Lemma, we need to as show that it is not possible to find indices $j_1>j_2\ge i$ and points $b, b'\in B$ with $b\neq b'$ and $r_1,r_2\in R$, such that $\|b'-r_1\|\in [t_{j_1},t_{j_1}+\varepsilon_k]$ and $\|b'-r_2\|\in [t_{j_2},t_{j_2}+\varepsilon_k]$. If that would have been the case, then, by the triangle inequality $t_{j_1}\le 
\|b'-r_1\|\le \|b'-r_2\|+\|r_1-r_2\|\le t_{j_2}+t_{i-1}+2\varepsilon_k,$ but, on the other hand, $t_{j_1}-t_{j_2}\ge \delta t_i\ge \delta K_it_{i-1}\ge \delta \frac 2{\delta}t_{i-1}\ge 2t_{i-1}>t_{i-1}+2\varepsilon_k,$ a contradiction. Thus, Condition $2$ is indeed satisfied as well.

Using Lemma~\ref{smallcliques}, we will show that $R$ is almost $(d-j,\gamma)$-flat. Since $S$ is almost $(d,\alpha_k)$-flat, we have that $R\cup B$ is $(r,d,\alpha_k)$-flat for all but at most two points $r\in R$. If $r\in R$ is such that $R\cup B$ is $(r,d,\alpha_k)$-flat, then by Lemma~\ref{smallcliques} we obtain that $R$ is $(r,d-j,\gamma)$-flat. Since this is true for all but at most two points $r\in R$, we obtain that $R$ is indeed almost $(d-j,\gamma)$-flat.

Thus, if $\gamma$ is sufficiently small, we have $|R|\leq A_{i-1}(d',d-j)$. Note also that $\max_{i< j\le k}\frac{t_{i}}{t_{i-1}+\varepsilon_k}\le K_{i+1}$, thus $B$ is $(j,20(K_{i+1}\beta)^{j/2})$-flat by Lemma \ref{global}. We obtain that $|B|\leq m_{k-i+1}(j)$ by Lemma~\ref{limit} (i) if  $\varepsilon_k$ and $\beta$ are sufficiently small.
Overall, for $|S|$ we obtain that
\[|S|\leq |R||B|\leq m_{k-i+1}(j)A_{i-1}(d',d-j),\]
if $\beta$, $\alpha_k$, and $\varepsilon_k$ is sufficiently small, and $K$ is sufficiently large. We separate two cases in order to bound $m_{k-i+1}(j)A_{i-1}(d',d-j)$.

\medskip

\bf Case 2.1: \rm $d-j\geq D_{i-1}$. In this case we obtain
\[
m_{k-i+1}(j)A_{i-1}(d',d-j)\leq  \binom{j+k-i+1}{k-i+1}\binom{d-j+i-1}{i-1}\leq \binom{d+1}{k}\leq m_k(d),\]
where the first inequality is true by induction and by \eqref{eqbbs}, the second is true if $d$ is sufficiently large, and the third is true by \eqref{eqlb}.

\medskip

\bf Case 2.2: \rm $d-j<D_{i-1}$. In this case we have $A_{i-1}(d',d-j)\leq 2+N_{i-1}(d',d-j)$. By Lemma~\ref{simple bound} we have $N_{i-1}(d',d-j)\leq C_i$ where $C_i$ depends only on $i$ and $D_{i-1}$, hence depends only on $i$. Thus, we obtain
\[m_{k-i+1}(j)A_{i-1}(d',d-j)\leq \binom{j+k-i+1}{k-i+1}(2+C_i)\leq\binom{d+1}{k}\leq  m_k(d),\]
where the first inequality follows from \eqref{eqbbs}, the second is true if $d$ is sufficiently large, and the third follows from \eqref{eqlb}.
\end{proof}

\subsection{Proof of Theorem~\ref{nearly k} (ii)}\label{sec23}
For $d=0$ the statement is obvious since $N_k(1)=M_k(0)=m_k(0)=1$ holds for any $k\geq 1$. For $d\geq 1$ in Claim \ref{A2} and in Claim\ref{A3} we will 
prove that for any $d'\geq d$ and $k=2,3$ we have $A_k(d',d) = m_k(d)$. This is sufficient in view of \eqref{eqrelation}.

\begin{cla}\label{A2}
We have $A_2(d',d)=m_2(d)$.
\end{cla}
\begin{proof}
Let $\varepsilon,\alpha>0$ be sufficiently small and $S$ be an almost $(d,\alpha)$-flat $\varepsilon$-nearly $2$-distance set in $\mathbb{R}^{d'}$ with distances $1\leq t_1<t_2$. Then for all but at most two points $p\in S$ we have that $S$ is $(p,d,\alpha)$-flat with respect to a $d$-dimensional subspace $\Gamma_p$. Let $K>3$ be a sufficiently large constant to be specified later. We may assume that $\frac{t_2}{t_1+\varepsilon}>K$, otherwise we have $|S|\leq m_2(d)$ by Lemma~\ref{limit} (i). For every $p_1,p_2\in S$ with $p_1\neq p_2$ colour the pair $\{p_1,p_2\}$ with blue if $\|p_1-p_2\|\geq t_2$ and with red otherwise. Let $B$ be a largest blue clique in $S$. Then $S$ can be partitioned into $|B|$ red cliques $R_1,\dots,R_{|B|}$ as in Lemma~\ref{clique}.

Let $\Gamma$ be the subspace spanned by $B-B=\setbuilder{b_1-b_2}{b_1,b_2\in B}$ and let $j$ be the dimension of $\Gamma$. 
Note that, since $B$ is an $\varepsilon$-nearly $1$-distance set, $B$ approximately a regular simplex. Hence, if $\varepsilon$ is sufficiently small, there is an absolute $\mu>0$ such that there is no $b\in B$ for which the set $B$ is $(b,j-1,\mu)$-flat. 

Let $R$ be any of the red cliques and let $B\cap R=\{b\}$. We will apply Lemma~\ref{smallcliques} to $R\cup B$ with a sufficiently small $\gamma$ (to be specified later). 
 
Let $\beta'$ be as in Lemma~\ref{smallcliques} and let $0<\beta\leq \beta'$ be sufficiently small. We may assume that $\alpha\leq \alpha(\beta)$ and that $K\geq K(\alpha_k,\beta)$. We may now apply Lemma~\ref{smallcliques}, as Conditions $1,2$ are automatically satisfied if $\varepsilon\le \beta^{d'+1}$.
If $j\geq 2$ and we have $\beta<\mu^{1/(j-1)}$, then $\beta^{j-1}<\mu$. Further, note that since we may assume that there is at least one blue edge, we have that $|B|\geq 2$. Thus, there is no $\gamma$ for which $B$ is $(0,\gamma)$-flat. In other words, there is no $\gamma$, for which $B$ is $(j-1,\gamma)$-flat if $j-1$. Thus, if $\beta$ is sufficiently small, then $j$ is as in Lemma~\ref{smallcliques}.

Since $S$ is $(p,d,\alpha)$-flat for all but at most two $p\in S$, for all but at most two (say $R_1$ and $R_2$) red cliques $R$ we have that $B\cup R$ is $(r,d,\alpha)$-flat for some $r\in R$. Thus by Lemma~\ref{smallcliques} we obtain that if $R$ is not $R_1$ or $R_2$, then $R$ is $(r,d-j,\gamma)$-flat for some $r\in R$. Now, since $R$ is an $\varepsilon$-nearly $1$-distance set, Lemma~\ref{limit} (i) implies that if $R$ is not $R_1$ or $R_2$, then we have $|R|\leq m_1(d-j)=d-j+1$ if $\varepsilon$ and $\gamma$ are sufficiently small. We also have $|R_1|+|R_2|\leq 2$, since there are only at most two $p\in S$ such that $S$ is not $(p,d,\alpha)$-flat.

Noting further that $|B|=j+1$, we obtain
\[|S|=|R_1|+\dots+|R_{|B|}|\leq \max \{(j+1)(d-j+1), j(d-j+1)+2\}.\]
Then either $d=j$ or $(j+1)(d-j+1)\geq  j(d-j+1)+2$ holds. In the first case, we have \[|S|\leq d+2\leq \binom{d+1}{2}\leq m_2(d)\] if $d\geq 3,$ and \[|S|\leq d+2\leq m_2(d)\] if $d=1,2$, since $m_2(1)=3$ and  $m_2(2)=5$. In the second case, we have \[|S|\leq (j+1)(d-j+1) \leq \left(\frac{d+2}{2}\right )^2\leq \binom{d+1}{2}\leq m_2(d)\] if $d\geq 4$, and \[|S|\leq (j+1)(d-j+1)\leq m_2(d)\] if $d=2,3$ since $m_2(2)=5$ and $m_2(3)=6$ (see Table \ref{small}).\\
\end{proof}

\begin{cla}\label{A3}
We have $A_3(d',d)=m_3(d)$.
\end{cla}

\begin{proof}
Let $\varepsilon,\alpha>0$ be sufficiently small and $S$ be an almost $(d,\alpha)$-flat $\varepsilon$-nearly $3$-distance set in $\mathbb{R}^{d'}$ with distances $0<t_1<t_2<t_3$. Later we will apply Lemma \ref{smallcliques} with a sufficiently small $\gamma$. 
Let $\beta'$ be as in Lemma \ref{smallcliques}, and let $0<\beta\leq \beta'$ be sufficiently small to be specified later. We may assume that $\alpha\leq \alpha(\beta)$ and that $K\geq \max\{3,K(\alpha,\beta)\}$ is sufficiently large. We may assume that $\frac{t_2}{t_1+\varepsilon}\geq K$ or $\frac{t_3}{t_2+\varepsilon}\geq K$ holds, otherwise we immediately obtain $|S|\leq m_3(d)$ by Lemma~\ref{limit} (i) if $\varepsilon$ and $\alpha$ are sufficiently small. We will analyse these two cases separately.

\medskip

\bf Case 1: \rm $\frac{t_3}{t_2+\varepsilon}\geq K$. For every $p_1,p_2\in S$ with $p_1\neq p_2$ colour the pair $\{p_1,p_2\}$  with blue if $\|p_1-p_2\|\geq t_3$ and with red otherwise. Let $B$ be a largest blue clique in $S$. Then $S$ can be partitioned into $|B|$ red cliques $R_1,\dots,R_{|B|}$ as in Lemma~\ref{clique}.

Let $\Gamma$ be the subspace spanned by $B-B=\setbuilder{b_1-b_2}{b_1,b_2\in B}$, and let $j$ be the dimension of $\Gamma$. 
Note that, since $B$ is an $\varepsilon$-nearly $1$-distance set, $B$ is approximately a regular simplex. Hence, there is an absolute $\mu>0$ such that if $\varepsilon$ is sufficiently small, then for no $b\in B$, the set $B$ is $(b,j-1,\mu)$-flat. 

Let $R$ be any of the red cliques and let $B\cap R=\{b\}$. We will apply Lemma~\ref{smallcliques} to $R\cup B$ with $\gamma$. Conditions $1,2$  of Lemma \ref{smallcliques} are automatically satisfied if $\varepsilon\le \beta^{d'+1}$. Moreover, if $j\geq 2$ and we have $\beta<\mu$, then $\beta^{j-1}<\mu$, thus $j$ is as in Lemma~\ref{smallcliques}. (The $j=1$ case can be handled in the same way as in Claim \ref{A2}.)

Since $S$ is $(p,d,\alpha)$-flat for all but at most two $p\in S$, for every red clique $R$, for all but at most two points $r\in R$, we have that $B\cup R$ is $(r,d,\alpha)$-flat. Thus, by Lemma~\ref{smallcliques} we obtain that every $R$ is $(r,d-j,\gamma)$-flat for all but at most two $r\in R$. Moreover, since $R$ is an $\varepsilon$-nearly $2$-distance set, we obtain that $|R|\leq A_2(d',d-j)$ if $\gamma$ is sufficiently small. Noting further that $|B|=j+1$, overall we obtain
\[|S|=|R_1|+\dots+|R_{|B|}|\leq (j+1)A_2(d',d-j).\]
For sufficiently small $\gamma$ and any red clique $R$, we have $|R|\leq 2$ if $d=j$. In this case it follows that $|S|\leq 2(d+1)$. Then for $d\geq 4$ we have \[|S|\leq 2(d+1)\leq \binom{d+1}{3}\leq m_3(d),\]
where the second inequality is true by a simple calculation, and the third is by \eqref{eqbbs}.
For $d=1,2,3$ we have \[|S|\leq 2(d+1)\leq m_3(d)\] given that $m_3(1)=4$, $m_3(2)=7$ and $m_3(3)=12$ (see Table \ref{small}).

If $j<d$, then $|R|\leq A_2(d',d-j')\leq m_2(d-j)$, where the second inequality is by the $k=2$ case of the theorem (Claim \ref{A2}) for sufficiently small $\gamma$. In this case, for $d\geq 9$ we have \[|S|\leq (j+1)m_2(d-j)\leq (j+1)\binom{d-j+2}{2}\leq \binom{d+1}{3}\leq m_3(d),\]
where the second inequality is true by \eqref{eqbbs}, the third by a simple calculation, and the fourth by \eqref{eqlb}.
For $d\leq 8,$ using the known values and bounds of $m_2(d)$ and $m_3(d),$ we check in the Appendix that \begin{equation}\label{eqcheck}(j+1)m_2(d-j)\leq m_3(d).\end{equation}

\medskip

\bf Case 2: \rm $\frac{t_2}{t_1+\varepsilon}\geq K>\frac {t_3}{t_2+\varepsilon}$. For every $p_1,p_2\in S$ with $p_1\neq p_2$ colour the pair $\{p_1,p_2\}$ with blue if $\|p_1-p_2\|\geq t_2$ and with red otherwise. Let $B$ be a largest blue clique in $S$. Using Lemma~\ref{clique}, partition the set $S$ into $|B|$ red cliques $R_1,\dots,R_{|B|}$. We split the analysis into two more subcases.

\medskip

\bf Case 2.1: \rm $\frac{t_3}{t_2+\varepsilon}> 1+\frac{2}{K}$. Let $R$ be one of the red cliques and let $R\cap B=\{b\}$. We will apply Lemma~\ref{smallcliques} to $R\cup B$ with $\gamma$, similarly as before. Condition $1$ is automatically satisfied. To check Condition 2, note that for any $p_1,p_2\in R$ and $b'\in B$ with $b\neq b'$, if $|p_1-b'|\in [t_i,t_i+\varepsilon]$ and $|p_2-b'|\in [t_{\ell}, t_{\ell}+\varepsilon]$, then by the triangle inequality we obtain that $\ell=i$ if $\varepsilon$ is sufficiently small. Thus, Condition 2. is satisfied as well if $\varepsilon\le \beta^{d'+1}$ and $\varepsilon$ is sufficiently small.

Let $j$ be as in Lemma \ref{smallcliques}. If for some $p\in R$ we have that $S$ is $(p,d,\alpha)$-flat, then $R$ is $(p,d-j,\gamma)$-flat by Lemma~\ref{smallcliques}. Further, note that the same is true for any red clique $R$ with the same $j$. Indeed, since $\frac{t_3}{t_2+\varepsilon}<K$,  Lemma~\ref{global} implies that if $\beta$ is sufficiently small, then there is a $j$ such that $B$ is $(b',j,\beta^j)$-flat for every $b'\in B$, but there is no $b'\in B$ for which it is $(b',j-1,\beta^{j-1})$-flat. This and Lemma~\ref{limit} (i) imply that for all but at most two red cliques $R$ we have $|R|\leq m_1(d-j)=d-j+1$ if $\gamma$ and $\varepsilon$ are sufficiently small. Moreover, if the two potential exceptions are say $R_1$, $R_2$, then $|R_1|+|R_2|\leq 2$. Note also that by Lemma \ref{limit} (i) we have $|B|\leq m_2(j)$ if $\beta$ and $\varepsilon$ are sufficiently small.

Overall, we obtain
\[|S|\leq |R_1|+\dots+|R_{|B|}|\leq \max \{m_2(j)(d-j+1), (m_2(j)-1)(d-j+1)+2\}.\]
Then we either have $d=j$ or $j\leq d-1$, and thus $m_2(j)(d-j+1)\geq (m_2(j)-1)(d-j+1)+2$. In the first case ($d=j$) for $d\geq 6$ we have \[|S|\leq m_2(d)+1\leq \binom{d+2}{2}+1\leq \binom{d+1}{3}\leq m_3(d),\]
where the second inequality is by \eqref{eqbbs}, the third is by a simple calculation, and the fourth is by \eqref{eqlb}. 
For $1\leq d \leq 5$ we have \[|S|\leq m_2(d)+1\leq m_3(d)\]
since $m_2(1)=3$, $m_2(2)=5$, $m_2(3)=6$, $m_2(4)=10$, $m_2(5)=16$ and $m_3(1)=4$, $m_3(2)=7$, $m_3(3)=12$, $m_3(4)=16$, $m_3(5)\geq 24$ (see Table \ref{small}). Finally, in the second case ($j\leq d-1$) we do the same analysis as in the end of Case 1.\\

\vskip+0.1cm

\bf Case 2.2: \rm $\frac{t_3}{t_2+\varepsilon}\leq 1+\frac{2}{K}.$
First, we will show that $|B|\leq d+1$. Indeed, we either have that $|B|\leq 2$, or there is a $b\in B$ such that $B$ is $(b,d,\alpha)$-flat. In the latter case, by Lemma \ref{global} we obtain that $B$ is $(d,20((1+2/K)\alpha)^{1/2})$-flat. Then, by Lemma~\ref{limit} (ii), if $\frac{2}{K}$, $\alpha$ and $\varepsilon$ are sufficiently small, we have that $|B|\leq m_1(d)= d+1$.

Next, we will show that for any red clique $R$ we have $|R|\leq d+1$. Indeed, we either have that $|R|\leq 2$, or there is an $r\in R$ such that $R$ is $(r,d,\alpha)$-flat. In the latter case, by Lemma \ref{global} we obtain that $R$ is $(d,20(\alpha)^{1/2})$-flat. Then, by Lemma~\ref{limit} (i), if $\alpha$ and $\varepsilon$ are sufficiently small, we have that $|R|\leq m_1(d)= d+1$.

We obtain that \[|S|=|R_1|+\dots+|R_{|B|}|\leq (d+1)^2.\] Then if $d\geq 9$, it follows by a simple calculation and by \eqref{eqlb} that \[|S|\geq (d+1)^2\leq \binom{d+1}{3}\leq m_3(d).\]
Further, for $d=7$ and for $d=8$ we have $m_3(8)\geq 121\geq (8+1)^2$, $m_3(7)\geq 65\geq (7+1)^2$ (see Table \ref{small}.) Therefore, in the rest of the proof we may assume that $d\leq 6$.

\medskip

\bf Case 2.2.1: $t_1\ge K^{0.1}(t_3-t_2)$.
\rm 
\begin{figure}
\begin{center}
\begin{tikzpicture}
\draw[thick,blue] (-20:6) arc (-20:20:6);
\draw[thick,blue] (-20:6.3) arc (-20:20:6.3);
\draw[thick,dashed] (-2,0) -- (7,0);
\draw[very thick,red] (6,0) -- (6.3,0);
\draw[->,thick,red] (6.8,0.7) -- (6.15,0.05);

\filldraw (-18:6) circle (2pt);
\filldraw (11:6.2) circle (2pt);
\draw[thick,teal] (-18:6.0) -- (11:6.2);
\filldraw[blue] (0,0) circle (2pt);

\node[fill=white] at (-18:5.6) {$r$};
\node[fill=white] at (11:6.65) {$q$};
\node[fill=white] at (0,0.4) {$b'$};
\node[fill=white] at (20:5.65) {\textcolor{blue}{$\mathcal S_2$}};
\node[fill=white] at (18:6.75) {\textcolor{blue}{$\mathcal S_3$}};
\node[fill=white] at (7.2,0.5) {\textcolor{red}{$t_3-t_2$}};
\node[fill=white] at (5.53,-0.35) {\textcolor{teal}{$t_1$}};

\end{tikzpicture}
\caption{}
\end{center}
\end{figure}
Let $R$ be a largest red clique. To bound the cardinality of $R$ in this case we will not use Lemma~\ref{smallcliques}, but we will use Lemma \ref{flatslemma} directly.

Let $R\cap B=\{b\}$, and let $\gamma$ be sufficiently small (to be specified later). For $\gamma'=\frac{\gamma}{2}$ let $\beta_0$ as in Lemma \ref{flatslemma}, and let $\beta\leq \beta_0$ be sufficiently small. Further, let $r$ be any point of $R$, and let $V=\setbuilder{b'-r}{b'\in B\setminus \{b\}}$. Assume that $j\geq 1$ is the smallest integer such that $V$ is $(j,\beta^j)$-Flat, and assume that $V$ is $(j,\beta^j)$-flat with respect to $\Gamma$. Let $q\in R$ be any point of $R$ such that $q\neq r$.

\begin{cla}\label{cond2} For every $b'\in B\setminus \{b\}$ the angle between the vectors $(q-r)$ and $(b'-r)$ falls in $[\frac{\pi}{2}-\frac{\beta^{d'}}{2},\frac{\pi}{2}+\frac{\beta^{d'}}{2}]$, if $K$ is sufficiently large, and $\varepsilon$ is sufficiently small.
\end{cla}

\begin{proof}
Let $\mathcal S_2$, $\mathcal S_3$ be spheres centred at $b'$ and of radii $t_2$ and $t_3$ respectively (see Figure 2). Then $r$ is $\varepsilon$-close to one of them. We only spell out the proof in the case when $r$ is $\varepsilon$-close to $S_1$, as the case when it is $\varepsilon$-close to $S_3$ can be done very similarly. Note that $q$ is also $\varepsilon$-close to $\mathcal S_2$ or $\mathcal S_3$. If $q$ is $\varepsilon$-close to $\mathcal S_2$, then some simple calculation shows that for some absolute constant $c_1$ the angle between the vectors $(q-r)$ and $(b'-r)$ falls in $[\frac{\pi}{2}-c_1/K,\frac{\pi}{2}+c_1/K]$. If $q$ is $\varepsilon$-close to $\mathcal S_3$, then we claim that for some absolute constant $c_2$ the angle between the vectors $(q-r)$ and $(b'-r)$ falls in $[\frac{\pi}{2}-c_2/K^{0.1},\frac{\pi}{2}+c_2/K^{0.1}]$. Indeed, this follows from the facts that $|q-r|\in[t_1,t_1+\varepsilon]$, $t_1\ge K^{0.1}(t_3-t_2)$, and that the radius of $\mathcal S_3$ is much bigger than $t_1$. Thus, we can conclude that if $K$ is sufficiently large, then the angle between the vectors $(q-r)$ and $(b'-r)$ falls in  $[\frac{\pi}{2}-\frac{\beta^{d'}}{2},\frac{\pi}{2}+\frac{\beta^{d'}}{2}]$
\end{proof}

Next, we will show that if for some $r\in R$ the set $R\cup B$ is $(r,d,\alpha)$-flat, then $R$ is $(r,d-j,\gamma)$-flat. This part of the proof is very similar to the proof of Lemma \ref{flatslemma}, but for completeness we spell it out with all details.

Assume that $B\cup R$ is $(r,d,\alpha)$-flat with respect to a $d$-dimensional subspace $\Gamma_r$. Let $\pi_r(V)$ be the projection of $V=\setbuilder{b'-r}{b'\in B\setminus \{b\}}$ on $\Gamma_r$. Assume that $j_r'\geq 1$ is the lowest dimension such that $\pi_r(V)$ is $(j_r,\frac{\beta^{j_r}}{2})$-Flat. Further, let $\Gamma_r'$ be a subspace of dimension $j_r$ such that $\pi_r(V)$ is $(j_r,\frac{\beta^{j_r}}{2})$-Flat with respect to $\Gamma_r'$.

The angle between $(b'-b)$ and $(b'-r)$ is at most $\alpha$ if $K$ is sufficiently large and $\beta$ is sufficiently small. The angle between $(b'-r)$ and $\pi_r(b'-r)$
is at most $\alpha$. Further, the angle between $\pi_r(b'-r)$ and $\Gamma_r$ is at most $\frac{\beta^{j_r}}{2}$. These, together with the triangle inequality imply that $B$ is $(b,j_r,\beta^{j_r})$-flat with respect to $\Gamma'_r$ if $2\alpha\leq \frac{\beta^{j_r}}{2}$. By the minimality of $j$, it follows than that $j_r\geq j$.

Let $\Lambda_r$ be the affine plane through $r$ parallel to $\Gamma'_r$. For a point $p\in \mathbb{R}^{d'}$ we denote by $\pi'_r(p)$ the projection of $p$ on $\Lambda_r$. Similarly, for a set $X\subseteq \mathbb{R}^{d'}$, let $\pi'_r(X)$ denote the projection of $X$ on $\Lambda_r$. Note that $\pi'_r(r)=r$. Let $(b'-r)^{\bot_r}$ denote the $(d-1)$-dimensional subspace in $\Gamma'_r$ that is orthogonal to $b'-r$.
\begin{cla}
For any $b'\in B\setminus \{b\}$ and any $r\in R$, the projection $\pi'_r(R)$ is $(r,d-1, \beta^{d})$-flat with respect to $(b'-r)^{\bot_r}$ if $\alpha$ and $\beta$ are sufficiently small and $K$ is sufficiently large. 
\end{cla}
\begin{proof}
Let $q\in R\setminus \{r\}$ any point of $R$. It follows from Claim \ref{cond2} that if $K$ is sufficiently large and $\varepsilon$ is sufficiently small, then the angle between the vectors $(q-r)$ and $(b'-r)$ falls in $[\frac{\pi}{2}-\frac{\beta^{d'}}{2},\frac{\pi}{2}+\frac{\beta^{d'}}{2}]$. Further, we have $\angle qr\pi'_r(q)\leq \alpha$. Thus, $\angle b'r\pi'_r(q)\in [\frac{\pi}{2}-\beta^{d},\frac{\pi}{2}+\beta^{d}]$ if $\alpha$ is sufficiently small.
Since $\pi'_r(R)$ is contained in $\Lambda_r$, we obtain that for every $b'\in B$ the set $\pi'_r(R)$ is $(r,d-1,\beta^{d})$-flat with respect to $(b'-r)^{\bot_r}$.
\end{proof}

Now we apply Lemma \ref{flatslemma} with $\pi_r(V)$ and $\gamma'=\frac{\gamma}{2}$, and obtain that $\pi'_r(R)$ is $(r,d-j_r,\frac{\gamma}{2})$-flat. This, by $j_r\geq j$, implies that $\pi_r(R)$ is $(r,d-j,\frac{\gamma}{2})$-flat. Since for any $q\in R$ the angle between $(q-r)$ and $(\pi_r(q)-r)$ is at most $\alpha$, it follows that $R$ is $(r,d-j,\gamma)$-flat if $\alpha$ is sufficiently small.

Thus, either there is no $r\in R$ for which $R$ is $(r,d,\alpha)$-flat, in which case $|R|\leq 2$, or there is an $r\in R$ such that $R$ is $(r,d-j,\gamma)$-flat. In the latter case, by Lemma \ref{global} we obtain that $R$ is $(d-j,20(\gamma)^{1/2})$-flat. Thus, by Lemma~\ref{limit} (i) we have $|R|\leq m_1(d-j)$ if $\gamma$ and $\varepsilon$ are sufficiently small. Overall, we have that $|R|\leq \max\{2,m_1(d-j)\}$. 

Finally, we claim that $|B|\leq j+1$. To see this, note if $K$ is sufficiently large, then the angle between the vectors $(b'-r)$ and $(b-r)$ is at most $\beta^{j}$. Then, since $V$ is $(j,\beta^j)$-Flat, we obtain that $B$ is $(b,j,2\beta^j)$-flat. Lemma \ref{global} implies that $B$ is $(j,20((1+2/K)\beta^j)^{1/2})$-flat. By Lemma \ref{limit} (ii) we conclude that $|B|\leq m_1(j)=j+1$ if $\beta$ and $\varepsilon$ are sufficiently small.  

Overall, we obtain that \[|S|\le (j+1)(d-j+1)\le m_3(d),\] where the second inequality was already proven in the previous cases.\\

\bigskip

\bf Case 2.2.2: \rm
$t_1\le K^{0.1}(t_3-t_2)$. For every $1\leq i\leq |B|$ let $\{b_i\}=B\cap R_i$, and for $m=2,3$, let $\mathcal S_{\ell}(i)$ be the sphere of radius $t_{m}$ centred at $b_i$. We need the following claim.

\begin{cla}\label{cla23}Assume that for some $1\leq i, \ell \leq |B|$ with $i\neq \ell$ there are points from $R_i$ in the $\varepsilon$-neighbourhoods of both $\mathcal S_2(\ell)$ and $\mathcal S_3(\ell)$. Then $R_{\ell}$ is contained in the $\varepsilon$-neighbourhood either of $\mathcal S_2(i)$ or of $\mathcal S_3(i)$.
\end{cla}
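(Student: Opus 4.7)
The plan is to linearise all blue-distance constraints in the direction $w := b_i - b_l$ and show that the scalar $\langle w, e\rangle$, where $e$ ranges over displacement vectors of points in $R_i$ from $b_i$ and of points in $R_l$ from $b_l$, is forced into one of two well-separated values. Place the origin at $b_l$, so $b_i = w$, and write each $p \in R_i$ as $p = w + e_p$ and each $q \in R_l$ as $q = e_q$, with $\|e_p\|, \|e_q\| \le t_1 + \varepsilon$. Setting $\Delta := t_3 - t_2$, the driving identity is
\[
\|p-q\|^2 - \|w\|^2 \;=\; 2\langle w, e_p - e_q\rangle + \|e_p - e_q\|^2.
\]
Both $\|w\|$ and $\|p-q\|$ are $\varepsilon$-close to $t_2$ or to $t_3$, so the left-hand side equals, up to error $O(\varepsilon t_2)$, one of the four values in $\{0,\pm(t_3^2 - t_2^2)\}$. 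Combining the Case 2.2.2 hypothesis $t_1 \le K^{0.1}\Delta$ with the Case 2.2 bound $\Delta \le 2t_2/K$ gives $\|e_p-e_q\|^2 = O(t_1^2) = O(t_2\Delta/K^{0.8})$, which is negligible next to $t_2\Delta$ of the same order as $|t_3^2 - t_2^2|$. Hence, for $K$ large and $\varepsilon$ small, $\langle w, e_p - e_q\rangle$ is pinned within a $o(t_2\Delta)$-neighbourhood of one of two specific values, the available pair depending on whether $\|w\| \approx t_2$ or $\|w\| \approx t_3$.

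I would handle the case $\|w\| \approx t_2$ in detail (the other is symmetric with signs flipped). Here the two admissible values of $\langle w, e_p - e_q\rangle$ are approximately $0$ and $\tfrac{1}{2}(t_3^2 - t_2^2)$. Applying the identity to the pairs $(p_1, b_l)$ and $(p_2, b_l)$ (with $e_{b_l} = 0$), the hypothesis pins $\langle w, e_{p_1}\rangle \approx 0$ and $\langle w, e_{p_2}\rangle \approx \tfrac{1}{2}(t_3^2 - t_2^2)$. Arguing for contradiction, suppose $R_l$ contains both a $q_1$ with $\|b_i - q_1\| \approx t_2$ and a $q_2$ with $\|b_i - q_2\| \approx t_3$; applying the identity to $(b_i, q_j)$ (with $e_{b_i} = 0$, which flips the sign) pins $\langle w, e_{q_1}\rangle \approx 0$ and $\langle w, e_{q_2}\rangle \approx -\tfrac{1}{2}(t_3^2 - t_2^2)$. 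Then for the pair $(p_2, q_2)$, we get $\langle w, e_{p_2} - e_{q_2}\rangle \approx t_3^2 - t_2^2$, forcing $\|p_2 - q_2\|^2 \approx 2t_3^2 - t_2^2$; this value lies at distance $\Theta(t_2\Delta)$ from both $t_2^2$ and $t_3^2$, contradicting $\|p_2 - q_2\| \in [t_2, t_2+\varepsilon]\cup [t_3, t_3+\varepsilon]$.

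The main technical obstacle is the error bookkeeping: the quadratic remainder $\|e_p-e_q\|^2$ and the $\varepsilon$-slack in each distance interval must both be strictly smaller than the gap $t_2\Delta$ between the two admissible values of $\langle w, e_p - e_q\rangle$. The Case 2.2.2 hypothesis $t_1 \le K^{0.1}\Delta$ is exactly what guarantees the first (the Case 2.2 bound alone provides no comparison between $t_1$ and $\Delta$ when $\Delta$ is very small), while $\varepsilon$ must be chosen sufficiently small once $K$ is fixed large.
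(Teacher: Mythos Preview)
Your argument is correct and is essentially the paper's proof recast in algebraic language: both linearise along the direction $b_i-b_l$ and show that if each of $R_i,R_l$ meets both spheres of the other centre, then the pair $(p_2,q_2)$ (the paper's $(v,w)$, with $v\in R_i$ near $S_3(l)$ and $w\in R_l$ near $S_3(i)$) is forced to have distance strictly exceeding $t_3+\varepsilon$. The paper obtains this by projecting $v,w$ onto the line through $b_i,b_l$ and bounding $\|u_l-w'\|\le 3/K^{0.8}$, whereas you use the polarisation identity and the bound $\|e_p-e_q\|^2=O(t_2\Delta/K^{0.8})$; these are the same estimate, and your control of the $\varepsilon$--slack via $\Delta\ge t_1/K^{0.1}\ge K^{-0.1}$ is exactly what the paper is tacitly using.
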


\begin{proof}Assume the contrary. We may assume  that $|b_i-b_{\ell}|\in [t_2,t_2+\varepsilon]$ (the case with $t_2$ replaced by $t_3$ can be treated similarly). Then there are points $p\in R_i$, $q\in R_{\ell}$ such that $p$ is in the $\varepsilon$-neighbourhood of $\mathcal S_3(\ell)$, and $q$ is in the $\varepsilon$-neighbourhood of $\mathcal S_3(i)$ (see Figure 3). Let $p',q'$ denote the projections of $p,q$ on the line $e$ passing through $b_i$ and $b_{\ell}$, and let $r_i$ and $r_{\ell}$ denote the points of intersection of $e$ and spheres $\mathcal S_3(\ell),$ $\mathcal S_3(i)$ respectively. Note that $\|r_i-r_{\ell}\|\ge t_3+(t_3-t_2).$ 

\begin{figure}
\begin{center}
\begin{tikzpicture}
\node[fill=white] at (-0.7,1.1) {$p$};
\node[fill=white] at (-0.8,0.3) {$p'$};
\node[fill=white] at (11:6.75) {$q$};
\node[fill=white] at (6.75,0.3) {$q'$};

\node[fill=white] at (0.3,0.3) {\textcolor{blue}{$b_i$}};

\node[fill=white] at (5.7,0.3) {\textcolor{blue}{$b_{\ell}$}};

\node[fill=white] at (3,0.3) {$e$};

\node[fill=white] at (20:5.35) {\textcolor{blue}{$\mathcal S_2(i)$}};
\node[fill=white] at (18:7.15) {\textcolor{blue}{$\mathcal S_3(i)$}};
\node[fill=white] at (0.9,1.75) {\textcolor{blue}{$\mathcal S_2(\ell)$}};
\node[fill=white] at (-0.8,2) {\textcolor{blue}{$\mathcal S_3(\ell)$}};

\draw[thick,blue] (-20:6) arc (-20:20:6);
\draw[thick,blue] (-20:6.5) arc (-20:20:6.5);
\draw[thick,blue] (0,0) arc (180:200:6);
\draw[thick,blue] (0,0) arc (180:160:6);
\draw[thick,blue] (-0.5,0) arc (180:200:6.5);
\draw[thick,blue] (-0.5,0) arc (180:160:6.5);

\draw[thick,dashed] (-0.36,1) -- (-0.36,0);
\draw[thick,dashed] (11:6.45) -- (6.36,0);

\draw[thick,dashed] (-1,0) -- (7,0);

\filldraw (-0.36,1) circle (2pt);
\filldraw (6.36,0) circle (2pt);
\filldraw (11:6.45) circle (2pt);
\filldraw[fill=blue] (6,0) circle (2pt);
\filldraw (-0.36,0) circle (2pt);
\filldraw[blue] (0,0) circle (2pt);

\end{tikzpicture}
\caption{}
\end{center}
\end{figure}

We claim that $\|r_i-p'\|,\|r_{\ell}-q'\|\le (t_3-t_2)/10.$ This would imply that $\|q-p\|\ge \|q'-p'\|\ge \|r_i-r_{\ell}\| -\frac 2{10}(t_3-t_2)>t_3+\varepsilon$, which is a contradiction. Let us only show that $\|r_{\ell}-q'\|\le (t_3-t_2)/10,$ since the other inequality can be proven in the same way.  
Due to our condition on $t_2$, we have $\|r_{\ell}-q\|\le \|r_{\ell}-b_{\ell}\|+\|b_{\ell}-q\|\le 2t_i+2\ep \le 3K^{0.1}(t_3-t_2).$ Since we have $t_3-t_2\le 2t_3/K,$ and $q$ lies in the $\varepsilon$-neighbourhood of $\mathcal S_3(i),$ for the angle $\gamma$ between the vector $(q-r_{\ell})$ and the line $e$ we have $2\cos \gamma = \frac {\|r_{\ell}-q\|}{\|q-b_i\|}\le \frac {3K^{0.1}(t_3-t_2)}{t_3}\le 3K^{-0.9}.$ Therefore, we have $\|r_{\ell}-q'\| = \|r_{\ell}-q\|\cos \gamma \le 3/K^{0.8}<(t_3-t_2)/10$ for sufficiently large $K$. 
\end{proof}

Assign an ordered pair $(\rho_1,\rho_2)$ to each ordered pair $(i,\ell)$ with $i\neq \ell,$ if $R_{i}$ can be covered by the $\varepsilon$-neighbourhood of $\rho_1$ many spheres out of $\mathcal S_2(\ell),\mathcal S_3(\ell)$, and $R_{\ell}$ can be covered by the $\varepsilon$-neighbourhood of $\rho_2$ many spheres out of $\mathcal S_2(i),\mathcal S_3(i)$. By Claim~\ref{cla23} we have that $(\rho_1,\rho_2)\in \{(1,1),(2,1),(1,2)\}.$ If there are $\tau(i)$ indices $\ell^1,\ldots,\ell^{\tau(i)}\in B\setminus \{i\}$ such that we assigned $(1,2)$ or $(1,1)$ to $(i,\ell)$, then $R_{i}$ is contained in the intersection of the $\varepsilon$-neighbourhood of $\tau(i)$ spheres of radii $t_2$ or $t_3$ (with centres in $b_{\ell^1},\ldots, b_{\ell^{\tau(i)}}$). 

Let $\gamma$ be sufficiently small, that will be specified later. Using Lemma \ref{flatslemma}, we will show that for any $r\in R_{i}$ the set $R_i$ is $(r,d-\tau(i),\gamma)$-flat provided that $\varepsilon$, $\alpha$ are sufficiently small, and $K$ is sufficiently large. Let $\beta_0$ be as in Lemma \ref{flatslemma} for $\gamma'=\frac{\gamma}{2}$, and let $\beta\leq \beta_0$ be sufficiently small.

We denote by $\Gamma_{i}'$ the subspace spanned by the set of vectors $\setbuilder{b_{\ell^s}-b_i}{1\leq s\leq \tau(i)}$. Standard calculation shows that if $\varepsilon$ and $\beta$ are sufficiently small, and $K$ is sufficiently large, then there is no $j<\tau(i)$ for which $\Gamma_{i}'$ is  $(j,\beta^j)$-Flat . (On an intuitive level, this is the case because $B$ is approximately a regular simplex.)
On the other hand, $\Gamma_{i}'$ is of dimension at most $\tau(i)$, thus it is $(\tau(i),\beta^{\tau(i)})$-Flat.

Let $p\in R_i$ be such that $p\neq r$. If $\varepsilon$ is sufficiently small and $K$ is sufficiently large, then for any $1\leq s\leq \tau(i)$ the angle of the vectors $(b_{\ell^s}-r)$ and $(r-p)$ is $\frac{\beta^{d'}}{2}$-close to $\frac{\pi}{2}$. Indeed, this follows since the lengths $\|b_{\ell^s}-r\|$ and $\|b_{\ell^s}-p\|$ are $\varepsilon$-close to each other, and $\|b_{\ell^s}-r\|$ is at least $K$ times as large as $\|r-p\|$. Further, if $\varepsilon$ is sufficiently small and $K$ is sufficiently large, then the angle of the vectors $(b_{\ell^s}-b_i)$ and $(b_{\ell^s}-r)$ is at most $\frac{\beta^{d'}}{2}$. Thus, the angle of $(r-p)$ and $(b_{\ell^s}-b_i)$ is $\beta^{d'}$-close to $\frac{\pi}{2}$. 

Assume that for some $r\in R_{i}$ the set $R_i\cup B$ is $(r,d,\alpha)$-flat with respect to $\Gamma_r$. Then using Lemma \ref{flatslemma} after projecting to a subspace parallel to $\Gamma_r$ through $r$, we obtain that $R_i$ is $(r,d-\tau(i),\alpha)$-flat. We omitted the details of this argument, as they are essentially the same as the proof of Lemma \ref{smallcliques} and of the proof of Case 2.2.1 after Claim \ref{cond2}.

Recall that $S$ is $(r,d,\alpha)$-flat for all but at most two $r\in S$. Thus, for all but at most two (say, $R_{1}$ or $R_{1}, R_{2}$) sets $R_{i}$ there is an $r\in R_i$ such that $R_i$ is $(r,d-\tau(i),\gamma)$-flat. Then Lemma \ref{global} and Lemma~\ref{limit} (i) together imply that $|R_{i}|\leq d-\tau(i)+1$ if $\alpha$ if $\gamma$ is sufficiently small.

Each pair of vertices contributes to at least $1$ of the $\tau(i)$'s, which implies that \[\sum_{i=1}^{|B|} \tau(i) \ge \binom{|B|}{2}.\]
If in all $R_{i}$ there is an $r$ such that $R$ is $(r,d,\alpha)$-flat, then we obtain 
\begin{equation}\label{first}|S|=\sum_{i=1}^{|B|}|R_i|\leq |B|(d+1)-\sum_{i=1}^{|B|} \tau(i)\le|B|(d+1)- \binom{|B|}{2}.
\end{equation}
Otherwise, repeating the same argument for $S':=\bigcup_{i=2}^{|B|} R_i$ or for $S'':=\bigcup _{i=3}^{|B|}R_i$ ,and using $|R_1|\le 2$ or $|R_{1}|+|R_{2}|\leq 2$, we obtain 
\begin{equation}\label{second}
|S|=\sum_{i=1}^{|B|}|R_i|\leq (|B|-1)(d+1)-\binom{|B|-1}{2}+2.
\end{equation}

By Lemma \eqref{limit} (ii) we have $|B|\leq d+1$, if $K$ is sufficiently large and $\varepsilon$ and $\alpha$ are sufficiently small. Thus, recalling that we assumed $d\leq 6$, in both \eqref{first} and \eqref{second} the right hand side is bounded from above by $m_3(d)$, by a simple calculation and by the fact that  $m_3(2)=7$, $m_3(3)=12$,  $m_3(4)=16$, $m_3(5)\geq 24$, $m_3(6)\geq 40$ (see Table \ref{small}).
\end{proof}

\subsection{Proof of Theorem~\ref{fixd}}\label{sec24}
In order to prove the theorem, we will need the spherical analogues of our quantities. For a set $P$ on a $d$-sphere $\mathcal S^d\subset \mathbb R^{d'}$ centred at $\mathbf{0}$, we say that $P$ is {\it  $(d,\alpha,\mathcal S^d)$-flat} if for each $p\in P$ it is $(p,d,\alpha)$-flat with respect to a $d$-dimensional subspace $\Gamma_p$ that contains the vector $p-\mathbf 0$. Note that we do not impose any conditions on the radius of the sphere.

Let $NS_k(d',d)$ denote the largest number $M$ such that for any $\alpha, \varepsilon>0$ there is a $(d,\alpha,\mathcal S^d)$-flat $\varepsilon$-nearly $k$-distance set of cardinality $M$ on a $d$-sphere $\mathcal{S}^{d}\subset \mathbb{R}^{d'}$. 

We call two subspaces $\Gamma_1$ and $\Gamma_2$ of $\mathbb{R}^d$ \emph{intersecting-orthogonal} if there is an orthogonal basis $\{v_1,\dots,v_d\}$ of $\mathbb{R}^d$ with indices $1\leq i \leq j \leq d$ such that $\{v_1,v_2\dots,v_j\}$ is an orthogonal basis of $\Gamma_1$ and $\{v_i,v_{i+1}\dots,v_d\}$ is an orthogonal basis of $\Gamma_2$. Slightly abusing notation, we will also call two affine planes $\Lambda_1$ and $\Lambda_2$ of $\mathbb{R}^d$ \emph{intersecting orthogonal} if the subspaces $\Gamma_1=\Lambda_1-\Lambda_1$ and $\Gamma_2=\Lambda_2-\Lambda_2$ are intersecting orthogonal.

For each $d,d',$ with $0\le d<d'$, we are going to prove that $N_k(d',d),NS_k(d',d) \le 2(k+1)^d$ simultaneously by induction on $d$. As the proof for $N_k(d',d)$ and for $NS_k(d',d)$ are very similar, we only spell it out with details for $NS_k(d',d)$, which is the slightly more complicated case. Then we will comment on how to modify the proof for $N_k(d',d)$.

The statement for $NS_k(d',d)$ is clear for $d=0$ and for any $d'$. Suppose that the statement holds for $d-1$. More precisely, we assume that there exist  $\varepsilon_{d-1},\alpha_{d-1}>0$ such that any  $(d-1,\alpha_{d-1},\mathcal S^{d-1})$-flat $\varepsilon_{d-1}$-nearly $k$-distance set $P$ on a $(d-1)$-sphere $\mathcal{S}^{d-1}\subset \mathbb{R}^{d'}$ satisfies $|P|\le 2(k+1)^{d-1}$. 
We are now going to prove a similar statement for $d$
with $\varepsilon_{d},\alpha_d>0$, where $\varepsilon_d,\alpha_d$ are sufficiently small compared to $\varepsilon_{d-1},\alpha_{d-1}$.

Fix some sufficiently small $\varepsilon_d,\alpha_d>0$. Take a $(d,\alpha_d,\mathcal S^d)$-flat $\varepsilon_d$-nearly $k$-distance set $P$ of points on a sphere $\mathcal S^d$. Let $\rho$ be the radius of $\mathcal{S}^d$, and let the $k$ distances be $1\leq t_1\leq \dots \leq t_k$. Note that we may also assume that $t_1\geq 2$.
Indeed, to get this, simply enlarge $P$ from $\mathbf{0}$. Then the enlarged image is a $(d,\alpha_d,\mathcal{S}^d)$-flat $2\varepsilon_d$-nearly $k$-distance set with distances $2t_1\leq \dots \leq 2t_k$.

Take any point $p\in P$, and for each $i\in[k]$ let $\mathcal{S}_i^{d-1}$ be the $(d-1)$-sphere obtained as the intersection of $\mathcal{S}^d$ with the $d$-sphere $\mathcal S(p,t_i)$ of radius $t_i$ centred at $p$.  Note that every point of $q\in P\setminus \{p\}$ is contained in the $\varepsilon_d$-neighbourhood of  $\mathcal{S}(p,t_i)$ for some $i$. 
Further, let $\Gamma_p$ be a subspace of dimension $d$ containing the vector $p-\mathbf 0$ such that $P$ is $(p,d,\alpha_d)$-flat with respect to $\Gamma_p$.

Let $j'$ be the largest index $j$ such that $t_{j}<\varepsilon^{1/4}_d\rho$, if there is any, and otherwise let $j'=0$. 
Then all points at distance at most $t_{j'}+\varepsilon_d$ from $p$ lie in a spherical cap with centre in $p$ and of angular radius $K_1\varepsilon^{1/2}_d$ for some constant $K_1$.
Denote the set of these points by $X$. For every $q\in X$, let $\Lambda_q$ be the $d$-dimensional affine plane (contained in the $(d+1)$-dimensional affine plane spanned by $\mathcal{S}^d$) tangent to $\mathcal S^d$ at $q$.  Further, let $\Gamma'_q$ be a $d$-dimensional subspace parallel to $\Lambda_q$. Then one can show (by combining a projection argument with standard calculations) that there is a universal constant $K_2$ such that $X$ is globally $(d,K_2\varepsilon^{1/2}_d)$-flat with respect to  $\Gamma_q'$.

Recall that at the same time for every $q\in X$ we have that $X$ is $(q,d,\alpha_d)$-flat with respect to a $d$-dimensional subspace $\Gamma_q$ containing $q-\mathbf{0}$. Since $\Gamma_q$ contains the vector $q-\mathbf{0}$, the subspace $\Gamma_q'$ is intersecting-orthogonal to $\Gamma_q$. Let $\Gamma_q''$ denote the intersection of $\Gamma_q$ and $\Gamma_q'$. We can conclude by simple calculation that for any $q\in X$ we have that $X$ is $(q,d-1,\alpha_{d-1})$-flat with respect to $\Gamma_q''$ provided $\varepsilon_d,\alpha_d$ are chosen appropriately small. Thus, by the induction hypothesis we obtain $|X|\le 2(k+1)^{d-1}$.

Next, let $j''$ be the smallest index such that  $t_{j''}\ge  (2-\varepsilon^{1/2}_d)\rho$
(if there is no such $j''$ then we put $j'':=k+1$). 
Let $Y$ be the set of those points of $P$ that are at distance at least $t_{j''}$ from $p$. Let $p'$ be the point on $\mathcal{S}^d$ that is diametrically opposite to $p$. Then $Y$ is contained in a spherical cap with centre in $p'$ and of angular radius $K_1\varepsilon^{1/2}_d$. Following the same argument that we used to bound $|X|$, we obtain that $|Y|\le 2(k+1)^{d-1}$.

Now consider the set $P':=P\setminus (X\cup Y\cup\{p\})$. For every point of $q\in P'$ there is an index $j'<i<j''$ such that the distance of $q$ and $p$ falls in $[t_i,t_i+\varepsilon_d]$. Note also that for every point $q\in P'$ the angle between $q-\mathbf {0}$ and $p-\mathbf 0$ is at least $\varepsilon^{1/2}_d$. Standard calculations show that if for some $q\in P'$ and $j'<i<j''$ we have $\|q-p\|\in [t_i,t_i+\varepsilon_d]$, then the distance from $q$ to $S_i^{d-1}$ is at most $K_3\varepsilon^{1/2}_d$ \mbox{for some constant $K_3$.}

For each $q\in P'$, replace $q$ with the closest point on the corresponding $S_i^{d-1}$. Denote the resulting set $P''$. Then the distances between distinct points of $P''$ are contained in
\[\Big[t_{1}-\frac 13\varepsilon_{d-1}, t_{1}+\frac 13\varepsilon_{d-1}\Big]\cup\ldots \cup \Big[t_{k}-\frac 13\varepsilon_{d-1}, t_{k}+\frac 13\varepsilon_{d-1}\Big],\]
provided that $\varepsilon_d$ is small enough. 
Thus, $P''$ is a $\frac 23\varepsilon_{d-1}$-nearly $k$-distance set with distances $1\leq t_1'\leq\dots\leq t_k'$, where the first inequality follows from the assumption that $t_1\geq 2$
and that $\varepsilon_{d-1}$ is sufficiently small.

Since the set $P''$ is obtained by a small perturbation from a subset of $P$, we can show by a simple calculation that there is a constant $K_4$ such that for any $q\in P''$ the set $P''\cup \{p\}$ is $(q,d,\alpha_d+K_4\varepsilon^{1/2}_d)$-flat with respect to a subspace $\Gamma_q$ containing $q-\mathbf{0}$. Let $\Gamma_q'$ be a rotation of $\Gamma_q$ by an angle at most $\alpha_4+K_4\varepsilon^{1/2}_d$ such that $\Gamma_q'$ contains $q-\mathbf{0}$ and $p-\mathbf{0}$. Then, by the triangle inequality, $P''$ is $(q,d,2(\alpha_d+K_4\varepsilon^{1/2}_d))$-flat with respect to $\Gamma_q'$. 

For every $j'<i<j''$ let $M_i$ be the affine $(d-1)$ dimensional plane containing $S_i^{d-1}$, and let $\Delta_i$ be the $(d-1)$-dimensional subspace parallel to $M_i$. If $q\in P''$ is in $S_i^{d-1}$, then $\Gamma'_q$ and $\Delta_i$ are intersecting-orthogonal. Moreover, $\Gamma_q'$ contains the centre of $S^{d-1}_i$. These imply that the set $P''_i:=P''\cap \mathcal{S}_i^{d-1}$ is $(d-1,\varepsilon_{d-1},\mathcal S^{d-1}_i)$-flat, where the subspace for the flatness at point $q$ is $\Gamma'_q\cap \Delta_i$.
Thus, by induction we have $|P''_i|\le 2(k+1)^{d-1}.$

Overall, we have 
\[|P|\le 1+\sum_{i=1}^{j'}|X|+\sum_{i=j''}^k|Y|+\sum_{i=j'+1}^{j''-1}|P''_i|\le 1+k\cdot 2(k+1)^{d-1}\le 2(k+1)^d.\]

A similar, but simpler proof works for non-spherical sets. Let us sketch the proof. We fix a point $p$, decompose the set $P$ into $p$ and the $\varepsilon_d$-neighbourhoods of $k$ spheres at distance $t_i$ from $p$. We then project the points on the corresponding spheres and apply inductive hypothesis for the spherical sets of dimension $d-1$. The only thing to verify is that the sets are $(d-1,\varepsilon_{d-1}, S_i^{d-1})$-flat, and notably that the corresponding approximating plane passes through the centre of the sphere $S_i^{d-1}$. But we may assume that, since the approximating plane $\Gamma_q$ of any point $q\in P\setminus \{p\}$ was containing $\mathbf 0$ and was forming an angle at most $\alpha_d$ with the line $pq$. Thus, by a slight perturbation (and by a possibly slightly weaker approximation), we may assume that both $p$ and $\mathbf 0$ are contained in $\Gamma_q$. Since the centre of $\mathcal S_i^{d-1}$ lies on the line $p\mathbf 0$, it is contained in $\Gamma_q$ as well.

\subsection{Proof of Theorems~\ref{manynearly} and~\ref{thmsmallkexact}}\label{sec6}
Let us start with the proof of the upper bound in Theorem~\ref{thmsmallkexact}. It is immediately implied by the following theorem, combined with the fact that $A_k(d) =m_k(d-1)$ in the cases covered in Theorem~\ref{thmsmallkexact}. (This is what we have actually shown in the proof of Theorem~\ref{nearly k}.)

\begin{thm}\label{thmak} For any $d\ge 2$ and $k\ge 1$  there exists $n_0$, such that for any $n\ge n_0$ we have
\begin{equation}\label{almost}M_k(d,n)\leq T(n,A_k(d))\leq \left (1-\frac{1}{A_k(d)}\right )\frac{n^2}{2}
.\end{equation}
Moreover, \eqref{almost} remains valid if in the definition of $M_k(d,n)$ we change the intervals of the form $[t_i,t_i+1]$ to intervals of the form $[t_i,t_i+cn^{1/d}]$ for some constant $c=c(k,d)$.
\end{thm}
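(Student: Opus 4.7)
The second inequality in \eqref{almost} is the Tur\'an edge count; I focus on the main content, $M_k(d,n)\le T(n,A_k(d))$, valid even for intervals of width $h=cn^{1/d}$. My plan is to argue by contradiction. Suppose $P\subseteq\R^d$ is a separated set of $n$ points whose distance graph $G$ (edges being pairs at distance in $\bigcup_{i=1}^k[t_i,t_i+h]$) has more than $T(n,A_k(d))$ edges; I will exhibit, for any prescribed $\varepsilon,\alpha>0$, an almost $(d-1,\alpha)$-flat $\varepsilon$-nearly $k$-distance set of cardinality $A_k(d)+1$ in $\R^d$, contradicting the definition of $A_k(d)$.

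The core construction is an iterated peeling. Since $G$'s density exceeds $1-1/A_k(d)$, an averaging argument yields $p_1\in P$ of degree $>(1-1/A_k(d))(n-1)$; pigeonholing over the $k$ intervals exhibits a single annulus $U_1=\{x:\|x-p_1\|\in[t_{i_1},t_{i_1}+h]\}$ of $p_1$-neighbors of size $\Theta(n)$. Restricting to $P\cap U_1$ and iterating the same averaging-plus-pigeonhole step, I produce points $p_1,\ldots,p_{A_k(d)+1}$ and nested subsets $P\supset P_1\supset\cdots\supset P_{A_k(d)}=:W$, each of size $\Theta(n)$, such that $\|p_j-p_l\|\in[t_{i_j},t_{i_j}+h]$ for $j<l$ and $\|w-p_j\|\in[t_{i_j},t_{i_j}+h]$ for every $w\in W$ and every $j$. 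In particular $C=\{p_1,\ldots,p_{A_k(d)+1}\}$ is a clique of $G$ with common-neighbor set $W$ of linear size.

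To upgrade $C$ to an almost $(d-1,\alpha)$-flat set, I invoke Lemma~\ref{smallcliques}: applied with $R$ being an appropriate subset of $C$ sharing a vertex $b$ with a well-chosen $B\subseteq W\cup\{b\}$, the lemma's conclusion is that $R$ is $(d-j,\alpha)$-flat with respect to the hyperplane orthogonal to the affine span of $B-b$, where $j$ is the smallest dimension in which $B$ is flat. Iterating over pairs in $C$ and combining with Lemma~\ref{global} to promote single-basepoint flatness to almost-flatness at every basepoint, I conclude that $C$ is almost $(d-1,\alpha)$-flat provided $W$ spans enough of the direction orthogonal to the affine span of $C$. Rescaling by $t_1^{-1}$ then turns $C$ into an $\varepsilon$-nearly $k$-distance set with $\varepsilon\asymp h/t_1$, arbitrarily small once the $t_{i_j}$'s used in the peeling are all at least $h/\varepsilon$.

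The main obstacle is the degenerate cases. When some $t_i$ used in the peeling is ``small'' (so $h/t_i$ does not tend to $0$), Lemma~\ref{clique} decomposes $P$ into a large-distance ``blue'' clique and small-distance ``red'' clusters; each red cluster is a sub-problem at a smaller spatial scale with fewer distance classes, while the inter-cluster graph carries the correct Tur\'an-type structure, so the argument reduces by induction on $k$. When $W$ ends up spanning a proper subspace of $\R^d$, the original $P$ essentially lies in a lower-dimensional slab, reducing to an instance of the theorem in dimension $d-1$ by induction on $d$. Finally, the precise window $h=cn^{1/d}$ is the largest for which the volume bound coming from $1$-separation of $P$ guarantees that a pigeonholed annulus in each peeling step contains a linear fraction of the remaining set; for smaller $h$ the pigeonhole is even easier, while for larger $h$ one of the annuli would have to cover essentially all of $P$ and the separation hypothesis fails to constrain it.
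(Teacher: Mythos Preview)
Your proposal has two substantive gaps, one technical and one structural.

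\textbf{The peeling does not iterate.} You claim that averaging gives $p_1$ of degree $>(1-1/A_k(d))(n-1)$, and that ``iterating the same averaging-plus-pigeonhole step'' produces nested sets $P_1\supset\cdots\supset P_{A_k(d)}=W$ each of size $\Theta(n)$. The first step is fine, but after restricting to $P_1\subset N(p_1)$ you have \emph{no control} over $|E(G[P_1])|$; the average degree there can be $O(1)$ even when $|E(G)|>T(n,\ell-1)$. (Take $G=K_{n/2,n/2}$ plus one edge $uv$ inside a part: then $G[N(u)]$ is a star with average degree $\approx 2$.) So a second averaging step gives nothing. One can in fact extract a $K_{\ell-1}$ with $\Theta(n)$ common neighbours from $|E(G)|>T(n,\ell-1)$, but this requires Erd\H os's theorem (an edge lying in $\delta n^{\ell-2}$ copies of $K_\ell$) rather than naive averaging --- and this is exactly the tool the paper invokes.

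\textbf{The flatness mechanism is wrong.} This is the more serious issue. Your set $W$ consists of common neighbours of $p_1,\dots,p_{\ell-1}$ in prescribed annuli; any $w\in W$ is therefore at the \emph{same} distance (up to $h$) from each $p_j$ as $p_\ell$ is, since $p_\ell\in W$. This makes $w$ a ``twin'' of $p_\ell$ and yields $(p_\ell,d-1,\alpha)$-flatness of $C$ --- but only at the single vertex $p_\ell$. For $i<\ell$ you would need a point $q_i$ with $\|q_i-p_j\|\approx\|p_i-p_j\|$ for \emph{every} $j\ne i$, including $j=\ell$; nothing in your construction provides this. Your invocation of Lemma~\ref{smallcliques} does not fit: with $R\subset C$ and $B\subset W\cup\{b\}$, Condition~2 (all distances from a fixed $b_1\in B$ to $R$ lie in a single tiny interval) fails because $\|w-p_j\|\approx t_{i_j}$ varies with $j$; swapping roles makes the conclusion about the wrong set. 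Lemma~\ref{global} is likewise inapplicable because it requires a uniform ratio bound on all pairwise distances, which you do not have across $C$.

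The paper's route is genuinely different and is engineered precisely to produce a twin at \emph{each} vertex. Starting from the edge in many $K_\ell$'s, it uses hypergraph K\H ov\'ari--S\'os--Tur\'an plus a random-selection step to find a complete $\ell$-partite $K_{1,1,m,\dots,m}$ with all pairwise distances $>c_1 n^{1/d}$, then repeatedly applies bipartite KST to shrink the parts so that between any two parts a single interval $[t_{f(i,j)},t_{f(i,j)}+h]$ is used. The outcome is parts $S_1=\{p_1\}$, $S_2=\{p_2\}$, $S_i=\{p_i,q_i\}$ for $i\ge 3$, with $\|p_i-p_j\|$ and $\|q_i-p_j\|$ in the \emph{same} interval for every $j$. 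Then $\|p_i-q_i\|\ge c_1 n^{1/d}\gg h$ forces $\angle q_ip_ip_j\in[\tfrac{\pi}{2}-\alpha,\tfrac{\pi}{2}+\alpha]$, so the hyperplane $(p_i-q_i)^\perp$ witnesses $(p_i,d-1,\alpha)$-flatness for each $i\ge 3$; the two exceptions are $p_1,p_2$, giving almost-flatness. No appeal to Lemma~\ref{smallcliques} is made. Your inductive handling of the degenerate cases (small $t_1$, low-dimensional $W$) is also not how the paper proceeds: the small-$t_1$ case is dispatched by a volume argument together with the strict inequality $N_{k-1}(d)<A_k(d)$, and there is no induction on $d$.
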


We first prove Theorem~\ref{thmak}, that is, we show  that \eqref{almost} holds with intervals of the form $[t_i,t_i+cn^{1/d}]$, where $c=c(k,d)$ is a sufficiently small constant, to be specified later. The proof of Theorem~\ref{manynearly} is very similar and is actually simpler. We sketch the changes needed to be made in order to prove it in the end of this section.\vskip+0.1cm

Let $\ell=A_k(d)+1$ and let $\alpha,\varepsilon>0$ be fixed such that there exists no almost $(d-1,\alpha)$-flat $\varepsilon$-nearly $k$-distance set in $\mathbb{R}^d$ of cardinality $\ell$.
Assume on the contrary that \eqref{almost} does not hold for some set of $n$ points $S''\subset \R^d$ for sufficiently large $n$. Let $1\leq t_1\le \ldots \le t_k$ be the corresponding distances, and let $c$ be the constant from the statement of the theorem. Our goal is to derive a contradiction by constructing an almost $(d-1,\alpha)$-flat $\varepsilon$-nearly $k$-distance set of cardinality~$\ell$.

In the proof, we will use a hierarchy of ``small'' constants given below. We write $\mu\ll \nu$ if $\mu$ is a certain (positive, but typically  quickly tending to $0$) function, depending on $\nu$ only. Thus, the arrows indicate the order of choosing the parameters: from the right to the left below (and thus, for consistency, one only needs to check that every condition we impose on a constant in the hierarchy only depends on the constants that are to the right from it and is of the form ``it is sufficiently small compared to some of the constants to the right''). Note also that all the constants given below are independent of $n$. 
\begin{equation}\label{hier}1/n\ll c\ll c_1\ll 1/C\ll 1/m\ll 1/M,\delta,c_2, \nu \ll 1/d,1/k,\alpha,\varepsilon.\end{equation}
We recommend the reader to refer to this chain of dependencies throughout the proof.

We use the following simple claim.
\begin{cla} For any $k\ge 0$, we have $N_k(d)<A_{k+1}(d).$
\end{cla}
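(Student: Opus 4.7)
The plan is to show $N_k(d) < A_{k+1}(d)$ by exhibiting, for every $\alpha, \varepsilon > 0$, an almost $(d-1,\alpha)$-flat $\varepsilon$-nearly $(k+1)$-distance set in $\mathbb{R}^d$ of cardinality $N_k(d)+1$, obtained by extending an extremal $(d-1,\alpha')$-flat $\varepsilon'$-nearly $k$-distance set $S$ of cardinality $N_k(d)$ by a single well-chosen extra point. The parameters $\alpha', \varepsilon'$ will be taken much smaller than $\alpha, \varepsilon$, and I will also rescale $S$ by a large factor $\lambda$ so that every pairwise distance in $S$ exceeds a large threshold $M$ with $1/M \le \min(\alpha,\varepsilon)/10$.

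Specifically, I fix any $q_0 \in S$, let $\Lambda_{q_0}$ be a $(d-1)$-plane witnessing $(q_0, d-1, \alpha')$-flatness of $S$, let $u$ be a unit vector normal to $\Lambda_{q_0}$, and set $p := q_0 + u$. Then $|p - q_0| = 1$ will become the new, smallest distance class. For any other $q \in S$ one has $|p - q|^2 = |q - q_0|^2 + 1 - 2u\cdot(q - q_0)$, and since $u \perp \Lambda_{q_0}$ while $q - q_0$ lies within angle $\alpha'$ of $\Lambda_{q_0}$, we have $|u\cdot(q - q_0)| \le \alpha'|q-q_0|$. A brief expansion then yields $\bigl||p-q| - |q-q_0|\bigr| \le \alpha' + O(1/M)$, so each $|p-q|$ lies in the same $\varepsilon$-window around the appropriate $t_i$ as $|q - q_0|$. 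Together with the new distance $1$, this shows $S \cup \{p\}$ is $\varepsilon$-nearly $(k+1)$-distance of cardinality $N_k(d)+1$.

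For almost $(d-1,\alpha)$-flatness I will take $q_0$ and $p$ as the two permitted exceptions. For any remaining $r \in S \setminus \{q_0\}$, I plan to verify $(r, d-1, \alpha)$-flatness of $S \cup \{p\}$ with respect to the very same plane $\Lambda_r^{(S)}$ witnessing $(r, d-1, \alpha')$-flatness of $S$. Only the new vector $r - p = (r - q_0) - u$ requires attention; decomposing into components parallel and perpendicular to $\Lambda_r^{(S)}$, the perpendicular part has length at most $\alpha'|r - q_0| + 1$, while $|r - p| \ge |r - q_0| - 1$. This yields $\sin(\mathrm{angle}(r - p, \Lambda_r^{(S)})) \le \alpha' + O(1/M) \le \sin \alpha$.

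The main obstacle is that $(d-1,\alpha')$-flatness of $S$ is only local: the witnessing planes $\Lambda_r^{(S)}$ at different vertices $r$ may vary substantially, and one cannot invoke Lemma~\ref{global} to obtain a global flat plane without a distance-ratio hypothesis. Hence a priori the single perturbation direction $u$, chosen from the complement of $\Lambda_{q_0}$, might have a large component perpendicular to some distant $\Lambda_r^{(S)}$. The resolution is the scaling trick: by rescaling $S$ so every $|r - q_0|$ vastly exceeds $|u|=1$, the unit perturbation contributes only $O(1/|r - q_0|)$ to the angle at $r$, uniformly over $r$ and over the orientation of $\Lambda_r^{(S)}$.
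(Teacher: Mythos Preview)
Your proposal is correct and follows essentially the same construction as the paper: take an extremal $(d-1,\mu)$-flat $\mu$-nearly $k$-distance set $S$ with all distances large, pick a base point $x=q_0\in S$, and adjoin a new point $y=p$ at distance $1$ from $x$ in the direction orthogonal to the witnessing hyperplane $\Lambda_x$, then observe that $S\cup\{y\}$ is an almost $(d-1,\alpha)$-flat $\varepsilon$-nearly $(k+1)$-distance set with the two allowed exceptional vertices being $x$ and $y$. Your write-up is more explicit about the scaling and the angle estimate at the remaining vertices $r\in S\setminus\{q_0\}$, but the idea and the construction are identical to the paper's.
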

\begin{proof}
  Take a construction $S$ of a $(d-1,\mu)$-flat $\mu$-nearly $k$-distance set in $\R^d$ in which the distances are at least $K/\mu$ for a sufficiently large $K$. Pick any $p\in S$, and let $\Gamma$ be a subspace of dimension $(d-1)$ such that $S$ is $(p,d-1,\mu)$-flat with respect to $\Gamma$. Let $q\in \mathbb{R}^d$ be a point at distance $1$ apart from $p$ such that $p-q$ is orthogonal to $\Gamma$. 
  Then it is easy to see that $S\cup \{q\}$ is an almost $(d-1,3\mu)$-flat $3\mu$-nearly $(k+1)$-distance set in $\R^d$ if $\mu$ is sufficiently small and $K$ is sufficiently large.
  
  Indeed, for any $r\in S\setminus \{p,q\}$ we have $\big |\|r-p\|-\|r-q\|\big| \leq \mu$ if $K$ is sufficiently large and $\mu$ is sufficiently small. Thus, the only distance between points of $S\cup \{q\}$ that is not $\mu$-close to a distance between points of $S$, is the distance $\|p-q\|$. Then by the triangle inequality we obtain that $S$ is a $3\mu$-nearly $(k+1)$-distance set. Further, the angle between $p-r$ and $q-r$ is at most $\mu$, if $\mu$ is sufficiently small and $K$ is sufficiently large. Thus, again, by the triangle inequality, for any $r\in S\setminus \{p,q\}$ we have that $S$ is $(r,d-1,3\mu)$-flat.
\end{proof}

Using the claim above, we may assume that $t_1\geq c_2 n^{1/d}$. 
Indeed, assume the contrary. Since $S''$ is separated, a volume argument implies that for each vertex $v\in S'',$ the number of vertices in $S''$ at distance at most $c_2 n^{1/d}$ from $v$ is at most $(4c_2)^dn$. Thus, removing all edges from $G''$ that correspond to such distances, we only remove at most $(4c_2)^dn^2$ edges. At the same time, we reduce  the size of the set of possible intervals by at least $1$. Hence, we apply Theorem~\ref{manynearly} with $\nu$ playing the role of $\varepsilon$, and obtain
\[M_k(d,n)\leq (4c_2)^dn^2+M_{k-1}(d,n)\leq (4c_2)^dn^2+\frac{n^2}{2}\left (1-\frac{1}{N_{k-1}(d)}+\nu\right )\leq \frac{n^2}{2}\left (1-\frac{1}{A_{k}(d)}\right )\]
by using the hierarchy \eqref{hier}.

We note here that in the proof of Theorem~\ref{manynearly}, this step is automatic, since the removal of edges corresponding to small distances only change the potential value of $\gamma$.

Our next goal is to obtain a sufficiently structured subset of $S''$. We need the following result of Erd\H os.

\begin{thm}[\cite{Erd69}]\label{erd69} Every $n$-vertex graph  with at least $T(n, \ell-1)+1$ edges contains an edge that is contained in at least $\delta n^{\ell-2}$ cliques of size $\ell$, where $\delta$ is a constant that depends only on $\ell$.
\end{thm}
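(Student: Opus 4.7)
I would prove Theorem~\ref{erd69} by induction on $\ell$, with a nested induction on $n$ for each fixed $\ell$. The base case $\ell = 2$ is immediate: every edge is itself a $K_2$, so the conclusion holds with $\delta = 1$.

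For the inductive step, fix $\ell \ge 3$ and let $G$ be an $n$-vertex graph with $e(G) \ge T(n, \ell-1) + 1$. I would split into two cases based on the minimum degree. \emph{Min-degree reduction:} if $G$ has a vertex $v$ of degree less than $\bigl\lceil \bigl(1 - \tfrac{1}{\ell-1}\bigr) n \bigr\rceil$, delete it; using the identity $T(n, \ell-1) - T(n-1, \ell-1) = \bigl\lceil \bigl(1 - \tfrac{1}{\ell-1}\bigr) n \bigr\rceil$ (up to an $O(1)$ rounding term), the resulting graph $G - v$ on $n - 1$ vertices still has at least $T(n-1, \ell-1) + 1$ edges. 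The inner induction on $n$ then produces an edge of $G - v$ in at least $\delta (n-1)^{\ell-2}$ copies of $K_\ell$ in $G - v$, and this edge lies in the same copies of $K_\ell$ in $G$, so after adjusting $\delta$ slightly we are done.

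\emph{Near-regular case:} otherwise every vertex has degree at least $\bigl(1 - \tfrac{1}{\ell-1}\bigr) n - O(1)$, and since $e(G) = T(n, \ell-1) + O(1)$ the graph must be essentially regular with degree matching the Turán graph. I would then apply the Erd\H os--Simonovits stability theorem (or alternatively Zykov symmetrization) to deduce that $G$ is within edit distance $o(n^2)$ of the Turán graph $T(n, \ell-1)$. A refinement specific to the threshold $e(G) = T(n,\ell-1)+1$ shows that the edit distance is in fact $O(1)$, so one can identify a partition $V_1 \cup \dots \cup V_{\ell-1}$ for which all but $O(1)$ edges respect the Turán structure. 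The single ``excess'' edge $uv$ must lie inside some part $V_i$; its common neighborhood then contains $(1 - o(1)) \bigl| \bigcup_{j \neq i} V_j \bigr|$, and each transversal of the remaining $\ell - 2$ parts extends $uv$ to a $K_\ell$, giving at least $\prod_{j \neq i} |V_j| \approx (n/(\ell-1))^{\ell-2}$ copies of $K_\ell$ through~$uv$.

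The main obstacle is the near-regular case: pushing stability all the way down to $O(1)$ edit distance at the Turán~$+1$ threshold is delicate and not directly available from the ``black-box'' stability theorem. I expect the cleanest route is via Zykov symmetrization, which replaces $G$ by a complete multipartite graph $G'$ with $e(G') \ge e(G)$; since $e(G') > T(n, \ell-1)$, the graph $G'$ has at least $\ell$ parts and hence contains many $K_\ell$'s through every ``internal-to-merged-class'' edge, and one must then track carefully how $K_\ell$'s transform under each symmetrization step in order to pull back the conclusion to the original graph~$G$.
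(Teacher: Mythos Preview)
The paper does not give its own proof of Theorem~\ref{erd69}: it is quoted from \cite{Erd69} and invoked as a black box inside the proof of Lemma~\ref{completemulti}. There is therefore no in-paper argument to compare your plan against.

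On the plan itself, two genuine gaps remain. In the min-degree reduction, the phrase ``after adjusting $\delta$ slightly'' is not legitimate: $\delta$ must depend only on $\ell$, whereas the inner induction on $n$ hands you an edge lying in $\delta (n-1)^{\ell-2}$ cliques, strictly fewer than $\delta n^{\ell-2}$. Iterating the deletion could terminate at a graph on $m$ vertices with $m$ much smaller than $n$ (nothing you wrote bounds $m$ from below by a constant times $n$), and the conclusion you extract from the terminal graph is then far too weak. To make this route work you would need an independent argument that the deletion process cannot shrink the vertex set below $\Omega(n)$, and you have not supplied one.

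In the near-regular case you already identify the problem: black-box Erd\H os--Simonovits stability only gives edit distance $o(n^2)$, and pushing this to $O(1)$ at the exact Tur\'an$+1$ threshold is essentially a separate structural theorem, not a consequence of stability. The Zykov symmetrization alternative does not repair this either: a symmetrization step can create copies of $K_\ell$ that were absent in $G$, and there is no evident mechanism to transport the conclusion ``some edge lies in $\delta n^{\ell-2}$ cliques'' backwards through such a step to the original graph. As it stands the plan is a reasonable outline of where the difficulty lies, but it does not yet contain a proof.
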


Consider the graph $G''=(S'',E)$, where the set of edges consist of all pairs of points $\{p_1,p_2\}$ for $p_1,p_2\in S$ that satisfy 
\[\|p_1-p_2\|\in \bigcup_{i=1}^k[t_i,t_i+c n^{1/d}].\]
Using the theorem above, we will show that the following lemma holds.

\begin{lem}\label{completemulti} For any fixed $m$, there exists a choice of $c_1=c_1(m)$  such that $G''$ contains a complete $\ell$-partite subgraph $K_{1,1,m,\ldots,m}$ with the distances between any two of its vertices strictly bigger than $c_1n^{1/d}$.
\end{lem}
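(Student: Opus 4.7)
The plan is to combine Theorem~\ref{erd69} with a standard supersaturation result and then enforce the distance condition via a packing argument.

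First, I note that $t_1\geq c_2 n^{1/d}$ has already been established and $c_1\ll c_2$ in the hierarchy~\eqref{hier}; hence every edge of $G''$ automatically has length at least $t_1 > c_1 n^{1/d}$, so no cleaning step is required. Under the standing hypothesis $|E(G'')| > T(n, A_k(d)) = T(n,\ell-1)$, I can apply Theorem~\ref{erd69} directly to $G''$: it yields an edge $\{u,v\}\in E(G'')$ contained in at least $\delta n^{\ell-2}$ copies of $K_\ell$. Setting $W := N_{G''}(u)\cap N_{G''}(v)$, the common neighborhood $W$ contains at least $\delta n^{\ell-2}$ copies of $K_{\ell-2}$, and therefore $|W|\ge \delta' n$ for some $\delta' = \delta'(\ell,\delta) > 0$.

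Next, I would invoke a standard supersaturation result: for any fixed $r$ and $M$, a graph on $N$ vertices containing $\Omega(N^r)$ copies of $K_r$ must contain a complete $r$-partite subgraph $K_{M,\ldots,M}$ with $r$ parts of size $M$, provided $N$ is sufficiently large (this is a consequence of the Erd\H{o}s--Simonovits supersaturation theorem combined with Kruskal--Katona, or of a dependent-random-choice argument). Applied to the induced subgraph $G''|_W$ with $r = \ell-2$ and a parameter chosen via the hierarchy, this produces disjoint sets $U_3,\ldots,U_\ell\subseteq W$ of the required size such that every cross-part pair is a $G''$-edge. Declaring $V_1=\{u\}$ and $V_2=\{v\}$, every cross-part distance in the resulting multipartite subgraph is then at least $t_1 > c_1 n^{1/d}$.

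The hard part will be to refine each $U_i$ to a subset $V_i\subseteq U_i$ of size exactly $m$ in which all pairwise Euclidean distances strictly exceed $c_1 n^{1/d}$, since same-part pairs are not constrained to be $G''$-edges and can in principle be close. I would handle this by a cell-based packing argument: since $S''$ is separated, a ball of radius $c_1 n^{1/d}$ contains only $O(c_1^d n)$ points of $S''$, so provided the supersaturation parameter was chosen large enough in terms of $m$ and $c_1$ (which is consistent with the hierarchy~\eqref{hier}), each $U_i$ must meet enough distinct $c_1 n^{1/d}$-cells of $\R^d$ to permit a greedy selection of $m$ pairwise-$c_1 n^{1/d}$-separated vertices. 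Concatenating $V_1,V_2,V_3,\ldots,V_\ell$ then produces the desired $K_{1,1,m,\ldots,m}$. The principal technical obstacle is precisely this final step, namely controlling within-part distances compatibly with the combinatorial output of supersaturation.
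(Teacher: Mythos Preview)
Your first three steps are essentially the paper's argument: apply Theorem~\ref{erd69} to locate an edge $\{u,v\}$ in $\delta n^{\ell-2}$ many $\ell$-cliques, and then feed the resulting $(\ell-2)$-uniform clique-hypergraph into a supersaturation/Erd\H{o}s-type result to extract a complete $(\ell-2)$-partite structure. Your observation that all cross-part pairs are automatically $G''$-edges, hence of length $\ge t_1\ge c_2n^{1/d}>c_1n^{1/d}$, is also correct.

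The genuine gap is Step~4. Your greedy packing argument cannot work as stated: the parts $U_i$ produced by supersaturation have size $M$, a \emph{constant} independent of $n$, whereas a single ball of radius $c_1n^{1/d}$ in a separated set can contain $\Theta(c_1^d n)$ points, which for large $n$ vastly exceeds any fixed $M$. Hence nothing prevents all of $U_i$ from lying inside one such ball (or one cell), and you may be unable to select even two points of $U_i$ at distance $>c_1n^{1/d}$. Making $M$ ``large enough in terms of $m$ and $c_1$'' does not help, because the obstruction scales with $n$, not with the constants in the hierarchy.

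The paper avoids this by reversing the order of operations: \emph{before} looking for the multipartite structure, it passes to a random subset $S_H$ of fixed size $C$ (with $c_1$ chosen afterwards, small in terms of $C$). A union bound over the $\binom{C}{2}$ pairs shows that with positive probability every pair in $S_H$ is at distance $>c_1n^{1/d}$, while simultaneously the clique-hypergraph on $S_H$ retains density $\ge\delta/2$. One then applies Theorem~\ref{Erd} inside $S_H$, and the within-part distance condition comes for free. The key point you are missing is that the spreading-out has to be enforced on a constant-size host set \emph{first}; it cannot be recovered a posteriori from an arbitrary constant-size output of supersaturation.
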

\begin{proof}We construct this multipartite graph in three steps. 

\bf Step 1. \rm  Using Theorem~\ref{erd69}, we find an edge $e=\{v_1,v_2\}$ that is contained in at least $\delta n^{\ell-2}$ cliques of size $\ell$. Let $E''$ be the set of those edges of the $\ell$-cliques, that are not incident to $v_1$ or $v_2$. Further, let $F$ be the set of the $(\ell-2)$-tuples formed by the $\ell-2$ vertices of the cliques that are different from $v_1$ and $v_2$. The vertices of $e$ form the first two parts of the multipartite graph. In what follows, we will work with the graph $G''$ induced on $S''\setminus\{v_1,v_2\}$ by $E''$. \vskip+0.1cm

\bf Step 2. \rm We select a set $S_H$ of $C$ vertices of $G''$ at random, and define a hypergraph $H'$ on $S_H$ as follows. Recall that $c_1\ll 1/C\ll \delta,1/\ell, 1/m$ (see \eqref{hier}; the exact dependency of $C$ on $\delta, m$ and of $c_1$ on $C$ shall be clear later), and consider the induced subgraph $G':=G''[S_H]$. $S''$ is separated, hence a volume argument implies that any vertex in $S''\setminus\{v_1,v_2\}$ is at distance strictly bigger than $c_1n^{1/d}$ from all but at most $(4c_1)^dn$ vertices of $S''\setminus\{v_1,v_2\}$. The number of vertices in  $S''\setminus\{v_1,v_2\}$ is  $n-2$, so by the union bound we have the following.
 \begin{quote}
     (i)  With probability at least $1-\binom{C}{2}(4c_1)^d n/(n-2)>1-c_1$, every pair of vertices in $S_H$ is at distance bigger than $c_1n^{1/d}$ from each other.
 \end{quote}
Indeed, the total number of pairs of vertices is $\binom{C}{2}$,  and for each pair the probability that it is at distance $\le c_1n^{1/d}$ is at most $(4c_1)^dn/(n-2)$. The inequality in (i) is possible to satisfy by fixing $\ell,C$ and choosing $c_1$ to be sufficiently small.
 
Next, we consider the $(\ell-2)$-uniform hypergraph $H''=(S''\setminus\{v_1,v_2\},F)$. The following is an easy consequence of a Markov inequality-type argument.
\begin{quote}(ii) With probability at least $\delta/2$, the edge density of the hypergraph \mbox{$H'=H''[S_H]$} is at least $\delta/2$. \end{quote}
Indeed, the average density of cliques should be the same as of $H''$, i.e., at least $\delta$. But if (ii) does not hold, then the average density is at most $(1-\delta/2)\cdot\delta/2+\delta/2\cdot 1=\delta-\delta^2/4<\delta,$ a contradiction.

If we choose $c_1<\delta/2$, then with positive probability both the property in (i) and in (ii) hold. Pick a subset $S_H\subseteq S\setminus \{v_1,v_2\}$  that satisfies both.

\bf Step 3. \rm We apply the following hypergraph generalisation of the K\H ov\'ari--S\'os--Tur\'an theorem due to Erd\H os.

\begin{thm}[\cite{Erd64}]\label{Erd}
For any $\ell\geq 4$, $m\geq 1$, $\delta>0$ there is a constant $C(\ell,m,\delta)$ such that the following holds for any $C\geq C(\ell,m,\delta)$. Any $(\ell-2)$-uniform hypergraph on $C$ vertices of edge density at least $\frac{\delta}{2}$
contains a copy of a complete $(\ell-2)$-partite $(\ell-2)$-uniform hypergraph with parts of size $m$. 
\end{thm}
 Applying the theorem to the $(\ell-2)$-hypergraph $H'$, we obtain a complete $(\ell-2)$-partite $(\ell-2)$-uniform hypergraph with parts of size $m$. This complete multipartite hypergraph corresponds to a complete $(\ell-2)$-partite graph in $G$ with parts of size $m$ and with all distances between points being at least $c_1n^{1/d}.$ Together with the edge $e$, this gives the desired $\ell$-partite subgraph $K_{1,1,m,\dots,m}$.
\end{proof}

Let the $\ell$ parts of the $K_{1,1,m\dots,m}$ in $G''$ be $S'_1,\dots,S'_{\ell}$, with $S_1=\{v_1\}$, $S_2=\{v_2\}$ and with $|S_3|=\dots=|S_{\ell}|=m$, further set $S'=S_1\cup\dots\cup S_{\ell}$. $S'$ has much more structure than the original set $S''$. However, distances from several intervals from $[t_1,t_1+cn^{1/d}],\dots,[t_k,t_k+cn^{1/d}]$ may appear between the vertices of $S'_i$ and $S'_j$ ($i\ne j$). To reduce it to one interval between any two parts, we will do the second ``preprocessing'' step using the following version of the K\H ov\'ari--S\'os--Tur\'an theorem.
\begin{thm}[\cite{KST}]\label{thmkst} For any $\zeta>0$ and $r\ge 1$  there exists $n_0$, such that for any $n\ge n_0$ we have the following. Any graph on $n$ vertices with at least $\zeta\binom{n}{2}$ edges contains $K_{r,r}$ as a subgraph.
\end{thm}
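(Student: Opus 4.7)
The plan is to apply the standard double-counting argument of K\H ov\'ari, S\'os and Tur\'an. Suppose for contradiction that $G=(V,E)$ is a graph on $n$ vertices with $|E|\ge \zeta\binom{n}{2}$ that contains no $K_{r,r}$; the goal is to force a bound on $n$ in terms of $\zeta$ and $r$, which yields the promised $n_0(\zeta,r)$.

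The main step is to estimate in two different ways the number $T$ of pairs $(v,U)$ with $v\in V$ and $U$ an $r$-subset of the neighbourhood of $v$. Counting by $v$ first gives $T=\sum_{v\in V}\binom{\deg(v)}{r}$, and the convexity of $x\mapsto \binom{x}{r}$ together with Jensen's inequality and the average-degree bound $\bar d = 2|E|/n\ge \zeta(n-1)$ yields $T\ge n\binom{\zeta(n-1)}{r}$.

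On the other hand, the assumption that $G$ contains no $K_{r,r}$ means that for every $r$-subset $U\subseteq V$ at most $r-1$ vertices can be adjacent to every vertex of $U$; otherwise, any $r$ such vertices together with $U$ would form a forbidden $K_{r,r}$. Summing over $U$ gives $T\le (r-1)\binom{n}{r}$. Combining the two estimates, $n\binom{\zeta(n-1)}{r}\le (r-1)\binom{n}{r}$; the left-hand side grows like $\zeta^r n^{r+1}/r!$ while the right-hand side grows like $(r-1)n^r/r!$, so the inequality fails as soon as $n\ge n_0(\zeta,r)$ with $n_0$ of order $(r-1)/\zeta^r$, giving the desired contradiction.

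No step presents a real obstacle, this being one of the oldest results of extremal graph theory. The only routine work is in the final comparison of binomial coefficients to pin down the explicit dependence of $n_0$ on $\zeta$ and $r$, where one verifies that $\binom{\zeta(n-1)}{r}/\binom{n}{r}\ge r/n$ once $n$ exceeds the claimed threshold.
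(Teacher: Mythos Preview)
Your argument is the classical K\H ov\'ari--S\'os--Tur\'an double-counting proof and is correct. The paper does not supply its own proof of this statement: Theorem~\ref{thmkst} is quoted as a known result from \cite{KST} and used as a black box in the procedure that follows Lemma~\ref{completemulti}. So there is nothing to compare against; you have simply filled in the standard proof of a cited theorem.

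One small remark on rigour: when you invoke Jensen's inequality for $x\mapsto\binom{x}{r}$, this polynomial is convex only on $[r-1,\infty)$, and some vertices may have degree below $r$. The usual fix is either to observe that for $n$ large enough the average degree $\zeta(n-1)$ exceeds $r$ and replace each $\binom{\deg(v)}{r}$ by $\max\bigl(\binom{\deg(v)}{r},0\bigr)$, which is convex on all of $\mathbb{R}$, or to use the cruder bound $\binom{x}{r}\ge (x-r+1)^r/r!$ for $x\ge r-1$. Either way the conclusion is unchanged.
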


Take $S'$ and set $i:=1$. Then do the following procedure.
\begin{itemize}
\item[1.] Set $j:=i+1$. If $i=1,j=2$, set $j:=3$.
\item[2.] Take the subgraph of $G'$ induced between $S_i'$ and $S_j'$. 
Choose an index $\psi=\psi(i,j)\in[k]$, such that
\[\bigl\lvert \bigl\{(v_i,v_j):v_i\in S_i', v_j\in S_j', |v_i-v_j|\in [t_\psi,t_\psi+c n^{1/d}]\bigr\} \bigr\rvert\ge \frac {m^{\sigma}}k,\]
where $\sigma=1$ if $i\in\{1,2\}$ and $\sigma=2$ otherwise.
Set $G_{ij}$ be the graph between $S_i'$ and $S_j'$ with the set of edges specified in the displayed formula above.
\item [3.] If $i\in\{1,2\}$, let $S''_i$ be the set of neighbours of $p_i$ in $G_{ij}$. If $i\notin\{1,2\}$, apply Theorem~\ref{thmkst} to $G_{ij}$ and find sets $S''_i\subset S'_i$, $S''_j\subset S'_j$, each of size $1\ll m'\ll m$, such that the graph $G_{ij}$ between $S''_i$ and $S''_j$ is complete bipartite.
\item[5.] Set $S'_i:=S''_i$, $S'_j:=S''_j$, $m:=m'$, $j:=j+1$. If $j\le k$ then go to Step 2. If $j>k$ then set $i:=i+1$. If $i\ge k$, then terminate, otherwise go to Step $1$.
\end{itemize}

Clearly, if $m$ in the beginning of the procedure was large enough, then at the end of the procedure $m$ is still larger than some sufficiently large $M$.
By running a procedure similar to the one above, we can shrink the parts $S_i$'s further such that for any $p_i\in S_i$ and $p_j,q_j\in S_j$ $(j\notin\{1,2\})$ the angle $\angle p_jp_iq_j$ is at most $\alpha.$ If $M$ is sufficiently large (see the hierarchy \eqref{hier}), then at the end of this second procedure each $S_i$ ($i\notin\{1,2\}$) has at least $2$ points. Thus, we obtain a subset $S\subset S'$, such that $G:=G''[S]$ is complete multipartite with parts $S_1,\ldots, S_{\ell}$ such that $|S_1|=|S_2|=1$ and $|S_3|=\dots=|S_{\ell}|=2$, moreover for any two parts $S_i, S_j$ there is an index $\psi(i,j)\in [k]$ such that \begin{multline}\label{eq03} \text{for any }p_i\in S_i,\ p_j,q_j\in S_j \text{ we have }\|p_i-p_j\|\in [t_{\psi(i,j)},t_{\psi(i,j)}+c n^{1/d}]\\\text { and } \angle p_jp_iq_j\leq \alpha.\end{multline} 

For each $3\leq i \leq \ell$ let $S_i=\{p_i,q_i\}$. Let $P$ be the set $\{p_1,\dots,p_{\ell}\}$ scaled by $\frac{1}{c_2n^{1/d}}$, that is, let $P=\frac{1}{c_2n^{1/d}}\{p_1,\dots,p_{\ell}\}$.  We will show that $P$ is an almost $(d-1,\alpha)$-flat $\varepsilon$-nearly $k$-distance set, and obtain the desired contradiction. Indeed, this set is separated, since all the distances between $p_i$ and $p_j$ for $i\neq j$ were at least $c_2n^{1/d}$. Further, it is an $\varepsilon$-nearly $k$-distance set, since the length of each of the intervals in which the distances fall is  $cn^{1/d}/c_2n^{1/d}=c/c_2\le \varepsilon$.

Finally, we claim that for any $i\notin\{1,2\}$ and any $j\neq i$ we have $\angle q_ip_ip_j\in [\frac{\pi}{2}-\alpha,\frac{\pi}{2}+\alpha]$. Let us show this. Take the point $q_i'$ on the line through $p_i,p_j$ such that $\|q_i-p_j\|=\|q_i'-p_j\|.$ Then, first, $\angle q_iq_i'p_j\in[(\pi-\alpha)/2,\pi/2]$ since $\angle q_ip_jp_i\le \alpha$ and the triangle $q_iq_i'p_j$ is isosceles. Second, we have $\|q_i'-p_i\|\le cn^{1/d}.$
Since $\|q_i-p_i\|\ge c_1n^{1/d}$, we may assume that $\angle q_i'q_ip_i\le \alpha/2$, and thus $\angle q_ip_ip_j\in [(\pi-\alpha)/2-\angle q_i'q_ip_i,\pi/2+\angle q_i'q_ip_i]\subset [\pi/2-\alpha,\pi/2+\alpha]$.

Thus, for every $i\notin \{1,2\}$ we have that $P$ is $(\frac{1}{c_2n^{1/d}}p_i,d-1,\alpha)$-flat with respect to the $d-1$ dimensional subspace orthogonal to $(p_i-q_i)$. This finishes the proof Theorem~\ref{thmak}.

\medskip

We now turn to the proof of Theorem~\ref{manynearly}. We prove that for every $\gamma>0$ inequality \eqref{eq01} holds with intervals of the form $[t_i,t_i+cn^{1/d}]$ where $c=c(k,d,\gamma)$ if $n$ is sufficiently large. As the proof is very similar to the proof of Theorem~\ref{thmak}, we only sketch it, pointing out the differences.

Let $\ell:=N_k(d)+1$ and $\alpha,\varepsilon>0$ be fixed such that there exists no $(d-1,\alpha)$-flat $\varepsilon$-nearly $k$-distance set in $\mathbb{R}^d$ of cardinality $\ell$.
Assume on the contrary that for every $c>0$ and $n_0$ there is an $n\geq n_0$, there are $k$ distances $t_1<\dots\leq t_k$ and a set $S''\subset \R^d$ of $n$ points for which
\[\Big|\setbuilder{(p,q)\in S''\times S''}{\|p-q\|\in [t_i,t_i+cn^{1/d}] \textrm{ for some } i\in [k]}\Big|>T(N_k(d),n)+\gamma n^2.\]
Our goal is to derive a contradiction by constructing an a $(d-1,\alpha)$-flat $\varepsilon$-nearly $k$-distance set of cardinality~$\ell$.

After including $\gamma$ in the hierarchy of constants on the same level as $\alpha$, the proof is the same as that of \eqref{thmak} up to the point of Lemma~\ref{completemulti}. Instead of Lemma~\ref{completemulti} we will use the following.

\begin{lem}\label{completemulti2}For any fixed $m$, there exists a choice $c_1=c_1(m,\gamma)$ such that $G''$ contains a complete $\ell$-partite subgraph $K_{m,\dots,m}$ such that the distance between any two of its vertices is bigger than $c_1n^{1/d}$.
\end{lem}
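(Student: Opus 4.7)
The strategy parallels the three-step proof of Lemma~\ref{completemulti}, with Theorem~\ref{erd69} replaced by the Erd\H os--Simonovits supersaturation theorem. Under the hypothesis being contradicted in Theorem~\ref{manynearly}, we have $|E(G'')|\ge \bigl(1-\tfrac{1}{\ell-1}+\gamma\bigr)\tfrac{n^2}{2}$, which exceeds $T(n,\ell-1)$ by an additive $\Omega_\gamma(n^2)$. Supersaturation then yields $\delta=\delta(\gamma,\ell)>0$ such that $G''$ contains at least $\delta n^{\ell}$ cliques of size $\ell$. Let $H=(S'',\mathcal{E})$ be the $\ell$-uniform hypergraph whose hyperedges are the vertex sets of the $K_\ell$'s in $G''$; its edge density is then bounded below by a constant depending only on $\ell$ and $\gamma$, which (abusing notation) we again call $\delta$.

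Next, as in Lemma~\ref{completemulti}, pick a uniformly random $S_H\subseteq S''$ of size $C$, where $c_1\leftarrow 1/C$ in the hierarchy~\eqref{hier}, so that $c_1$ is small with respect to $C$, $m$, $\ell$, $\gamma$. A Markov-type argument shows that with probability at least $\delta/2$ the induced sub-hypergraph $H[S_H]$ has edge density at least $\delta/2$. Simultaneously, since $S''$ is separated, each vertex of $S''$ has at most $(4c_1)^d n$ others at distance $\le c_1 n^{1/d}$, so by a union bound the probability that every pair of vertices of $S_H$ is at distance strictly greater than $c_1 n^{1/d}$ is at least $1-\binom{C}{2}(4c_1)^d > 1-\delta/2$, once $c_1$ is small enough in terms of $C$ and $\delta$. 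Fix an $S_H$ realising both events.

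Finally, the $\ell$-uniform analogue of Theorem~\ref{Erd} (which is in fact the original form of Erd\H os' result in \cite{Erd64}) applied to $H[S_H]$ produces, provided $C\ge C(\ell,m,\delta)$, a complete $\ell$-partite $\ell$-uniform sub-hypergraph with parts $A_1,\ldots,A_\ell$ each of size $m$. The $\ell m$ vertices involved form the desired $K_{m,\dots,m}$ in $G''$: for any $u\in A_i$ and $v\in A_j$ with $i\ne j$, completing $\{u,v\}$ by picking one vertex from each of the remaining $\ell-2$ parts yields a hyperedge of $H$, i.e.\ a $K_\ell$ in $G''$, so in particular $\{u,v\}\in E(G'')$; the distance condition holds by the construction of $S_H$. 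The only genuinely new ingredient beyond the proof of Lemma~\ref{completemulti} is the replacement of the clique-density step by supersaturation, and this fits into the hierarchy~\eqref{hier} precisely because $c_1$ is chosen last, after $C$, $m$, $\ell$ and $\delta$.
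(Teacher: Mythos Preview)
Your proposal is correct and follows essentially the same three-step approach as the paper's sketch: supersaturation to get $\Omega(n^\ell)$ copies of $K_\ell$, random sampling of $C$ vertices to simultaneously guarantee pairwise separation and positive $\ell$-uniform hypergraph density, then Erd\H os' hypergraph K\H ov\'ari--S\'os--Tur\'an to extract the complete $\ell$-partite structure. The only cosmetic difference is that the paper cites the Lov\'asz--Simonovits clique-supersaturation theorem (Theorem~\ref{LS}) rather than the Erd\H os--Simonovits version; either suffices here.
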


The proof of Lemma~\ref{completemulti2} is very similar to the proof of Lemma~\ref{completemulti}, except that instead of Theorem~\ref{erd69} we use a result of Erdős and Simonovits \cite{ESsat} about the supersaturation of $\ell$-cliques. (And then work with $\ell$-uniform hypergraphs instead of $\ell-2$.) Therefore, we only give an outline of the proof.

\begin{thm}[\cite{ESsat}]\label{LS} For any $\ell,\gamma>0$ there is a $\delta$ such that if a graph $G$ on $n$ vertices has at least $T(n,\ell)+\gamma n^2$ edges, then it contains at least $\delta n^{\ell}$ cliques of size $\ell$.
\end{thm}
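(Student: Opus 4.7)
The plan is to prove Theorem~\ref{LS} by induction on $\ell$, in fact establishing the stronger bound of $\delta n^{\ell}$ copies of $K_\ell$ (which dominates $\delta n^2$ since $\ell \geq 2$). The base case $\ell=2$ is immediate: every one of the $\gamma n^2$ edges is itself a copy of $K_2$.

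For the inductive step, assume the statement for $\ell-1$ with supersaturation function $\delta_{\ell-1}(\cdot)$, and let $G$ have $n$ vertices and at least $T(n,\ell)+\gamma n^2$ edges. I would first perform a standard degree clean-up: iteratively delete any vertex whose current degree is strictly below $d^* := (1-\tfrac{1}{\ell-1})n + \tfrac{\gamma}{4}n$. Each deletion removes fewer than $d^*$ edges, and because $T(n,\ell)-T(n-1,\ell)\leq (1-\tfrac{1}{\ell-1})n$ up to lower-order terms, the procedure cannot push the edge count below $T(n',\ell)+\tfrac{\gamma}{2}(n')^2$. Hence it halts with a subgraph $G'$ on $n'=\Theta(n)$ vertices of minimum degree at least $d^*-O(1)$ and with $e(G')\geq T(n',\ell)+\tfrac{\gamma}{2}(n')^2$.

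Next, for each $v\in V(G')$ consider the induced subgraph $H_v := G'[N_{G'}(v)]$. Since $G'$ has minimum degree $d^*-O(1)$, inclusion--exclusion gives that every edge $xy\in E(G')$ has at least $2d^*-n'$ common neighbours, so
\[
\sum_{v\in V(G')} e(H_v) \;=\; \sum_{xy\in E(G')} |N(x)\cap N(y)| \;\geq\; e(G')(2d^*-n').
\]
Combining this lower bound with the trivial upper bound $\sum_v T(|N(v)|,\ell-1)\leq n'\cdot T(n',\ell-1)$ and a direct computation showing $T(n',\ell)(2d^*-n')-n'\cdot T(n',\ell-1)=\Omega(\gamma)(n')^3$ (via the explicit Tur\'an values), I conclude that at least an $\Omega(\gamma)$-fraction of vertices $v\in V(G')$ have a surplus $e(H_v)\geq T(|N(v)|,\ell-1)+\gamma'|N(v)|^2$ for some $\gamma'=\gamma'(\gamma,\ell)>0$. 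Applying the inductive hypothesis to each such $H_v$ produces $\delta_{\ell-1}(\gamma')|N(v)|^{\ell-1}$ copies of $K_{\ell-1}$ in $H_v$, each of which combined with $v$ spans a $K_\ell$ in $G$. Summing over the $\Omega(n)$ good vertices and dividing by $\ell$ yields $\delta n^{\ell}$ copies of $K_\ell$ in $G$, with $\delta=\delta(\gamma,\ell)>0$.

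The main obstacle is the transfer step: translating an edge-density surplus in $G$ into an edge-density surplus in a positive fraction of neighbourhoods $H_v$, in a form to which induction applies. The displayed double count accomplishes this, but one must verify the arithmetic carefully, and in particular rule out the possibility that the surplus concentrates on only a sublinear set of vertices; a second-moment or convexity argument on the degree sequence of $G'$ handles this. Once the surplus is known to reside in a linear fraction of vertices, the induction closes cleanly.
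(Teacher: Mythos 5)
The paper never proves Theorem~\ref{LS}: it is quoted from Lov\'asz--Simonovits, so your argument must stand entirely on its own, and as written its central step fails. The problem is the claimed ``direct computation'' $T(n',\ell)(2d^*-n')-n'\cdot T(n',\ell-1)=\Omega(\gamma)(n')^3$: this quantity is in fact \emph{negative} by a constant times $(n')^3$ for small $\gamma$. Concretely, take $\ell=3$ and a graph of minimum degree above $d^*=(\tfrac12+\tfrac{\gamma}{4})n$ (so the clean-up deletes nothing and $n'=n$; such graphs with $T(n,3)+\gamma n^2$ edges exist in abundance). Then $2d^*-n'=\tfrac{\gamma}{2}n$, so your lower bound on $\sum_v e(H_v)$ is about $T(n,3)\cdot\tfrac{\gamma}{2}n\approx\tfrac{\gamma}{6}n^3$, while the benchmark $n'\,T(n',2)\approx\tfrac{1}{4}n^3$; the difference is about $-\tfrac14 n^3$, so nothing follows about any neighbourhood having a surplus over its $(\ell-1)$-partite Tur\'an number. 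Two separate choices cause this. First, the degree threshold is set at the wrong level: to make codegrees useful you need minimum degree about $(1-\tfrac1\ell)n$, the degree of the Tur\'an graph $T(n,\ell)$; note also that $T(n,\ell)-T(n-1,\ell)=n-\lceil n/\ell\rceil\approx(1-\tfrac1\ell)n$, not $(1-\tfrac1{\ell-1})n$ as you assert. Second, and more fundamentally, the bound $\sum_v T(|N(v)|,\ell-1)\le n'\,T(n',\ell-1)$ is lossy by a constant factor, since it evaluates the benchmark at $n'$ rather than at $|N(v)|\approx(1-\tfrac1\ell)n'$; already for the Tur\'an graph $T(n,\ell)$ itself one has $\sum_v e(H_v)=\sum_v T(|N(v)|,\ell-1)$ exactly, whereas your two displayed estimates differ by $\Theta(n^3)$. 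That constant-factor loss swamps the $\gamma$-surplus, so the conclusion that an $\Omega(\gamma)$-fraction of the neighbourhoods satisfy the inductive hypothesis is not established.

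The overall strategy (induct on $\ell$ through vertex neighbourhoods) can be repaired, but it requires a genuinely different computation: compare $\sum_{xy\in E}\mathrm{codeg}(x,y)\ge\sum_{xy\in E}(d_x+d_y-n)=\sum_v d_v^2-ne$ with $\sum_v\bigl[(1-\tfrac1{\ell-1})\tfrac{d_v^2}{2}+\gamma' d_v^2\bigr]$, evaluated at the true degrees, and use Cauchy--Schwarz $\sum_v d_v^2\ge 4e^2/n$; the hypothesis $e\ge T(n,\ell)+\gamma n^2\approx(1-\tfrac1\ell)\tfrac{n^2}{2}+\gamma n^2$ is exactly what makes this close (this is essentially the Moon--Moser-type clique argument), and one still has to argue that the surplus is spread over $\Omega(n)$ vertices, which you only gesture at. If you just want a self-contained proof of the stated theorem, the cleanest route is the Erd\H{o}s--Simonovits averaging argument: for a suitable constant $m=m(\ell,\gamma)$, a positive proportion of the $m$-vertex subsets span more than $T(m,\ell-1)$ edges, hence each contains a $K_\ell$ by Tur\'an's theorem, and since each $K_\ell$ lies in at most $\binom{n-\ell}{m-\ell}$ such subsets one gets $\delta n^{\ell}\ge\delta n^2$ cliques with no degree clean-up or double counting at all.
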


\begin{proof}[Sketch of proof of Lemma~\ref{completemulti2}]
We construct this multipartite graph in three steps.

\bf Step 1. \rm  Using Theorem~\ref{LS}, we find $\delta n^{\ell}$ cliques of size $\ell$. Let $E''$ be the set of the $\ell$-cliques, and $F$ be the set of the $\ell$-tuples. In what follows, we will work with the graph $G''$ induced on $S''$ by $E''$. \vskip+0.1cm

 \bf Step 2. \rm Select $C$ vertices of $G''$ at random, where $c_1\ll 1/C\ll \delta,1/\ell, 1/m$. Denote by $S_H$ the set of $C$ vertices that we chose and consider the induced subgraph $G':=G''[S_H]$. A similar calculation as in the proof of Lemma~\ref{completemulti} implies the following.
 \begin{quote}
     (i)  With probability at least $>1-c_1$, every pair of vertices in $S_H$ is at distance bigger than $c_1n^{1/d}$ from each other.
 \end{quote}
 
Next, we consider the $\ell$-uniform hypergraph $H''=(S'',F)$. As before we obtain the following.
\begin{quote}(ii) With probability at least $\delta/2$, the edge density of the hypergraph $H'=H''[S_H]$ is at least $\delta/2$. \end{quote}

If we choose $c_1<\delta/2$ then with positive probability both the property in (i) and in (ii) hold. Pick a subset $S_H\subseteq S$  that satisfies both.

\bf Step 3. \rm Applying Theorem~\ref{Erd} to the $\ell$-hypergraph $H'$, we obtain a complete $\ell$-partite $\ell$-uniform hypergraph with parts of size $m$. This complete multipartite hypergraph corresponds to a complete $\ell$-partite graph in $G$ with parts of size $m$ and with all distances between points being at least $c_1n^{1/d}.$ 
\end{proof}

Let the $\ell$ parts of the $K_{m\dots,m}$ in $G''$ be $S'_1,\dots,S'_{\ell}$, with $|S_1|=\dots=|S_{\ell}|=m$ and set $S'=S_1\cup\dots\cup S_\ell$. Running a similar procedure as before we obtain a subset $S\subset S'$, such that $G:=G''[S]$ is complete multipartite with parts $S_1,\ldots, S_{\ell}$ such that $|S_1|=\dots=|S_{\ell}|=2$, moreover for any two parts $S_i, S_j$ there is an $\psi(i,j)\in [k]$ with \begin{multline*}\label{eq03} \text{for any }p_i\in S_i,\ p_j,q_j\in S_j \text{ we have }\|p_i-p_j\|\in [t_{\psi(i,j)},t_{\psi(i,j)}+c n^{1/d}]\\\text { and } \angle p_jp_iq_j\leq \alpha.\end{multline*} 
For each $1\leq i \leq \ell$ let $S_i=\{p_i,q_i\}$. Then we can show that $P=\frac{1}{c_2n^{1/d}}\{p_1,\dots,p_{\ell}\}$ is a $(d-1,\alpha)$-flat $\varepsilon$-nearly $k$-distance set, and obtain a contradiction.

\section{Concluding remarks}

Let us list some of the intriguing open problems that arose in our studies. One important step forward would be to get rid of the (almost-)flatness in the relationship between nearly $k$-distance sets and the quantity $M_k(d,n)$ that appears in Theorems~\ref{manynearly} and~\ref{thmak}. In particular, it would be desirable to prove the first equality in Conjecture~\ref{conj1} and, more generally, show the following.
\begin{prb}
Show that $A_k(d+1,d) = N_k(d+1) = M_k(d)$ holds for any $k,d.$
\end{prb}
In fact, even showing the first equality would imply that the value of $M_k(d,n)$ for large $n$ is determined {\it exactly} by the value of $N_k(d+1)$.

Another interesting question that looks approachable is to determine the value of $M_k(d)$ on the part of the spectrum opposite to that of Theorem~\ref{nearly k}: for any fixed $d$ and $k$ sufficiently large. Note
that the order of magnitude of $M_k(d)$ in this regime is easy to find, as it is shown in Theorem~\ref{fixd}.
\begin{prb}
Determine $M_k(d)$ for any fixed $d$ and sufficiently large $k.$
\end{prb}
If resolved, then with some effort it would most likely be possible to determine the value of $M_k(d,n)$ for large $n$ in this regime as well. 

\section*{Acknowledgements}
We thank Konrad Swanepoel for introducing us to the topic, for many fruitful discussions, and for his helpful comments on the manuscript. We are also grateful for Peter Allen and the anonymous referees for suggestions on improving the presentation of the paper. This research was done while the first author was a PhD student at the London School of Economics.

\section*{Appendix}

\begin{proof}[Proof of Claim~\ref{cla1}]
  First, assume that $a_1<z_1/2$. Then \begin{small}$${z_1+z_2-1\choose a_1+a_2}> {z_1\choose a_1}{z_2-1\choose a_2}+{z_1\choose a_1+1}{z_2-1\choose a_2-1}\ge {z_1\choose a_1}\Big({z_2-1\choose a_2}+{z_2-1\choose a_2-1}\Big) = {z_1\choose a_1}{z_2\choose a_2}.$$\end{small}
The proof is the same for $a_2<z_2/2$. Finally, assume that $a_1=z_1/2$ and $a_2=z_2/2$ (and thus that $a_1,a_2\ge 2$). Since $a_1=a_2=2$ is excluded, assume that $a_1\ge 3$. We use the following inequalities: 
\begin{small}\[{z_1\choose a_1+1}=\frac {z_1-a_1}{a_1+1}{z_1\choose a_1} \ge \frac 34{z_1\choose a_1}\]
\end{small}
and \begin{small}
\[{z_1\choose a_1-1}{z_2-1\choose a_2+1}=\frac {a_1}{z_1-a_1+1}{z_1\choose a_1}\frac {z_2-a_2-1}{a_2+1}{z_2-1\choose a_2}\ge \frac 14{z_1\choose a_1}{z_2-1\choose a_2}=\frac 14{z_1\choose a_1}{z_2-1\choose a_2-1}.\]
\end{small}
Using these two inequalities, we can repeat the calculations as above:
\begin{multline*}
  {z_1+z_2-1\choose a_1+a_2}\ge {z_1\choose a_1}{z_2-1\choose a_2}+{z_1\choose a_1+1}{z_2-1\choose a_2-1}+{z_1\choose a_1-1}{z_2-1\choose a_2+1}\\
  \ge {z_1\choose a_1}{z_2-1\choose a_2}+{z_1\choose a_1}{z_2-1\choose a_2-1}={z_1\choose a_1}{z_2\choose a_2}.
\end{multline*}
\end{proof}

\begin{proof}[Proof of \eqref{eqcheck}]
Using the known values of $m_2(d)$ and bounds on $m_3(d),$ we obtain the following.
\begin{itemize}
\item[$d=8$:] $\max\{(j+1)m_2(d-j): j=0,\ldots, 8\} = \max\{45, 2\cdot 29, 3\cdot 27, 4\cdot 16, 5\cdot 10, 6\cdot 6, 7\cdot 5, 8\cdot 3, 9\cdot 1\} = 81\leq 121 \leq m_3(8)$; 

\item[$d=7$:] $\max\{(j+1)m_2(d-j): j=0,\ldots, 7\}=\max\{29, 2\cdot 27,3\cdot 16, 4\cdot 10,5\cdot 6, 6\cdot 5, 7\cdot 3,8\cdot 1\}=
54\leq 65 \leq m_3(7)$;

\item[$d=6$:] $\max\{(j+1)m_2(d-j): j=0,\ldots, 6\}=\max\{27,2\cdot 16,3\cdot 10,4\cdot 6,5\cdot 5,6\cdot 3,7\cdot 1\}=32\leq 40\leq m_3(6)$;

\item[$d=5$:] $\max\{(j+1)m_2(d-j): j=0,\ldots, 5\}=\max\{16,2\cdot 10,3\cdot 6,  4\cdot 6, 5\cdot 3, 6\cdot 1\}=24\leq m_3(5)$;

\item[$d=4$:] $\max\{(j+1)m_2(d-j): j=0,\ldots, 4\}=\max\{10,2\cdot 6,3\cdot 5, 4\cdot 3, 5\cdot 1\}=15\leq16= m_3(4)$;

\item[$d=3$:] $\max\{(j+1)m_2(d-j): j=0,\ldots, 3\}=\max\{6,2\cdot 5,3\cdot 3, 4\cdot 1\}=10\leq 12 = m_3(3)$;

\item[$d=2$:] $\max\{(j+1)m_2(d-j): j=0,1, 2\}=\max\{5,2\cdot 3,3\cdot 1\}=6\leq7= m_3(2)$;

\item[$d=1$:] $\max\{(j+1)m_2(d-j): j=0,1\}=\max\{3,2\cdot 1\}=3\leq4=  m_3(1)$.

\end{itemize}
\end{proof}

\begin{proof}[Proof of Claim~\ref{beta}]
Let $q'$ be a the translate of $p$ by $v_q$, and $r'$ be the translate of $r$ by $r'$. Then $r'-q'$ is parallel $v_q-v_r$. Let $\beta_1=\angle qrq'$ and $\beta_2=\angle rq'r'$. Then the angle between $q'-r'$ and $q-r$ is at most $\beta_1+\beta_2$, thus it is sufficient to show that $\beta_1,\beta_2\leq 10 (K\alpha)^{1/2}$. We will prove it for $\beta_2$, for $\beta_1$ it can be done similarly.
By the low of cosines we have \[\cos \beta_2=\frac{\|q'-r\|^2+\|q'-r'\|^2-\|r-r^2\|}{2\|q'-r\|\|q'-r'\|}.\]
By the triangle inequality we have \[\|q-r\|-\|q-q'\| \leq \|q'-r\|\leq \|q-r\|+\|q-q'\|\] and
\[\|q-r\|-\|q-q'\|-\|r-r'\| \leq \|q'-r'\|\leq \|q-r\|+\|q-q'\|+\|r-r'\|.\] Further, we have
\[\|q-q'\|=2\sin \alpha \|p-q\|\leq 2\alpha K \|q-r\|\] and \[\|r-r'\|=2\sin \alpha \|p-r\|\leq 2\alpha K \|q-r\|,\] where in both cases the inequality follows by $\sin\alpha\leq \alpha$, and by the assumption that $\frac{\|p-q\|}{\|q-r\|}\leq K$.
By denoting $\|q-r\|=z$, the inequalities above imply
\[1-\cos \beta_2\leq 1-\frac{2(z-4\alpha K z)^2-4(\alpha K z)^2}{2(z+4\alpha K z)^2}\leq 25 \alpha K.\]
Combining this with $\frac{\beta_2^2}{4}\leq 1-\cos \beta_2$ we obtain $\beta_2\leq 10(\alpha K)^{1/2}$.
\end{proof}

\end{document}